\newcommand{\rt}{\rightarrow}
\newcommand{\lrt}{\longrightarrow}
\newcommand{\st}{\stackrel}
\newcommand{\pa}{\partial}
\newcommand{\al}{\alpha}
\newcommand{\la}{\lambda}
\newcommand{\Ga}{\Gamma}
\newcommand{\ga}{\gamma}
\newcommand{\lan}{\langle}
\newcommand{\ran}{\rangle}
\newcommand{\fS}{\frak{S}}
\newcommand{\C}{\mathbb{C} }
\newcommand{\D}{\mathbb{D} }
\newcommand{\K}{\mathbb{K} }
\newcommand{\R}{\mathbb{R} }
\newcommand{\Z}{\mathbb{Z} }
\newcommand{\CA}{\mathcal{A} }
\newcommand{\Ab}{\mathcal{A}\mathfrak{b} }
\newcommand{\CQ}{\mathcal{Q} }
\newcommand{\CS}{\mathcal{S} }
\newcommand{\CT}{\mathcal{T} }
\newcommand{\CX}{\mathcal{X} }
\newcommand{\CY}{\mathcal{Y} }
\newcommand{\RMod}{{\rm{Mod}} \ R}
\newcommand{\Inj}{{\rm{Inj}}}
\newcommand{\Prj}{{\rm{Prj}}}
\newcommand{\GInj}{{\rm{GInj}}}
\newcommand{\GPrj}{{\rm{GPrj}}}
\newcommand{\RGInj}{{\rm{GInj}} \ R}
\newcommand{\RGPrj}{{\rm{GPrj}} \ R}
\newcommand{\RGFlat}{{\rm{GFlat}} \ R}
\newcommand{\GFlat}{{\rm{GFlat}}}
\newcommand{\Flat}{{\rm{Flat}}}
\newcommand{\card}{{\rm{card}}}
\newcommand{\KA}{\K(\CA)}
\newcommand{\KTAP}{{\K_{\rm{tac}}({\Prj} \ \CA)}}
\newcommand{\KI}{{\K({\Inj} \ \CA)}}
\newcommand{\KTAI}{{\K_{\rm{tac}}({\Inj} \ \CA)}}
\newcommand{\KR}{\K(R)}
\newcommand{\KC}{\K(\C(R))}
\newcommand{\KPR}{{\K({\Prj} \ R)}}
\newcommand{\KTAPR}{{\K_{\rm{tac}}({\Prj} \ R)}}
\newcommand{\KIR}{{\K({\Inj} \ R)}}
\newcommand{\KTAIR}{{\K_{\rm{tac}}({\Inj} \ R)}}
\newcommand{\KFR}{{\K({\Flat} \ R)}}
\newcommand{\KPC}{{\K({\Prj} \ {\C}(R))}}
\newcommand{\KIC}{{\K({\Inj} \ {\C}(R))}}
\newcommand{\KFC}{{\K({\Flat} \ {\C}(R))}}
\newcommand{\KTAPC}{{\K_{\rm{tac}}({\Prj} \ {\C}(R))}}
\newcommand{\KTAIC}{{\K_{\rm{tac}}({\Inj} \ {\C}(R))}}
\newcommand{\KCPC}{{\K_{\rm{c}}({\Prj} \ \C(R))}}
\newcommand{\KCIC}{{\K_{\rm{c}}({\Inj} \ \C(R))}}
\newcommand{\KPQ}{{\K({\Prj} \ \CQ)}}
\newcommand{\KIQ}{{\K({\Inj} \ \CQ)}}
\newcommand{\op}{{\rm{op}}}
\newcommand{\cone}{{\rm{Cone}}}
\newcommand{\vcone}{{\rm{VCone}}}
\newcommand{\TS}{{\rm{S}}}
\newcommand{\VS}{{\rm{VS}}}
\newcommand{\id}{{\rm{id}}}
\newcommand{\HT}{{\rm{H}}}
\newcommand{\Ker}{{\rm{Ker}}}
\newcommand{\Hom}{{\rm{Hom}}}
\newcommand{\Ext}{{\rm{Ext}}}
\newtheorem{theorem}{Theorem}[section]
\newtheorem{corollary}[theorem]{Corollary}
\newtheorem{lemma}[theorem]{Lemma}
\newtheorem{proposition}[theorem]{Proposition}
\newtheorem{notation}[theorem]{Notation}
\theoremstyle{definition}
\newtheorem{definition}[theorem]{Definition}
\newtheorem{remark}[theorem]{Remark}
\newtheorem{s}[theorem]{}
\theoremstyle{plain}
\newtheorem{stheorem}{Theorem}[subsection]
\newtheorem{scorollary}[stheorem]{Corollary}
\newtheorem{slemma}[stheorem]{Lemma}
\newtheorem{sproposition}[stheorem]{Proposition}
\theoremstyle{definition}
\newtheorem{sdefinition}[stheorem]{Definition}
\newtheorem{sremark}[stheorem]{Remark}
\numberwithin{equation}{section}
\begin{document}

\title[homotopy category of projective complexes]{Homotopy category of projective complexes and complexes of Gorenstein projective modules}

\author[Asadollahi, Hafezi and Salarian]{Javad Asadollahi, Rasool Hafezi and Shokrollah Salarian}

\address{Department of Mathematics, University of Isfahan, P.O.Box: 81746-73441, Isfahan, Iran and School of Mathematics, Institute for Research in Fundamental Science (IPM), P.O.Box: 19395-5746, Tehran, Iran} \email{asadollahi@ipm.ir}
\email{r.hafezi@sci.ui.ac.ir}
\email{salarian@ipm.ir}

\subjclass[2000]{16E05, 13D05, 16E10, 16E30}

\keywords{Gorenstein projective complex, homotopy category, compactly generated triangulated category, adjoint functor, precover}


\begin{abstract}
Let $R$ be a ring with identity and $\C(R)$ denote the category of complexes of $R$-modules. In this paper we study the homotopy categories arising from projective (resp. injective) complexes as well as Gorenstein projective (resp. Gorenstein injective) modules. We show that the homotopy category of projective complexes over $R$, denoted $\KPC$, is always well generated and is compactly generated provided $\KPR$ is so. Based on this result, it will be proved that the class of Gorenstein projective complexes is precovering, whenever $R$ is a commutative noetherian ring of finite Krull dimension. Furthermore, it turns out that over such rings the inclusion functor $\iota : \K(\RGPrj)\hookrightarrow \KR$ has a right adjoint $\iota_{\rho}$, where $\K(\RGPrj)$ is the homotopy category of Gorenstein projective $R$ modules. Similar, or rather dual, results for the injective (resp. Gorenstein injective) complexes will be provided. If $R$ has a dualising complex, a triangle-equivalence between homotopy categories of projective and of injective complexes will be provided. As an application, we obtain an equivalence between the triangulated categories $\K(\RGPrj)$ and $\K(\RGInj)$, that restricts to an equivalence between $\KPR$ and $\KIR$, whenever $R$ is commutative, noetherian and admits a dualising complex.
\end{abstract}

\maketitle

\tableofcontents

\section{Introduction}
Let $R$ be a ring and $\CX$ be one of the classes $\Inj R, \Prj R$ or $\Flat R$, the class of injective, projective or flat (left) $R$-modules. We call the modules in any of these classes the homological objects of $\RMod$, the category of $R$-modules.

There has been some beautiful results, starting from 2005 by papers of Krause \cite{K05} and J{\o}rgensen \cite{J05}, and continuing by papers of Iyengar and Krause \cite{IK} and Neeman \cite{N08} which focuses on the properties of homotopy category $\K(\CX)$. The upshot of these first three papers was gaining an extension of Grothendieck duality theorem. Let us be a little bite more precise. Let $R$ be commutative and noetherian admitting a special complex $D$, called the dualising complex. The Grothendieck duality theorem tells us that over such rings, the bounded derived category of finitely generated $R$-modules is self-dual, that is, there is an equivalence
\[\R\Hom_R(- , D):{\D^b(R)}^{\op} \rt \D^b(R)\]
of triangulated categories. The results of Krause and J{\o}rgensen in particular implied that for such rings, $\KIR$ and $\KPR$ are compactly generated and are infinite completions of ${\D^b(R)}^{\op}$ and $\D^b(R)$, respectively. The authors of \cite{IK} used these results in order to extend this equivalence to their completions. They showed that there is a triangle-equivalence
\[D\otimes_R - :\KPR \rt \KIR\]
which restricts on compact objects to the above equivalence.
Afterwards, the homotopy category of flat $R$-modules came into play by Neeman \cite{N08}. Among other things, he showed that $\KPR$ is always well generated and is compactly generated if $R$ is right coherent, thus obtaining a generalization of result of \cite{J05}. For a good survey on these results one may consult the introduction of \cite{N08}.

It is worth to remind that results on the compactness of the homotopy categories $\KIR$ and $\KPR$ allow us to apply Brown representability theorem for the existence of certain adjoints. These adjoints will lead to the existence of Gorenstein injective preenvelopes/Gorenstein projective precovers, see e.g. \cite{J07}.

A natural attempt is to try to get similar equivalences in another abelian categories, on one extreme and to extend the Iyengar-Krause equivalence to more larger classes, on another extreme.
Perhaps, the first attempt was done by Neeman's Ph.D. student Murfet, who generalized the above mentioned results to the category of quasi-coherent sheaves over a semi-separated noetherian scheme. He followed Neeman's beautiful idea to consider the quotient category $\KFR/\KPR^{\perp}$ as a replacement for $\KPR$ that may be extended to non-affine case, see \cite{Mu}. As another attempt, in \cite{AEHS} the authors obtained an extension of the above results in the category of representations of certain quivers. In particular, for finite quivers they presented a triangle equivalence $\KPQ \st{\sim}{\rt} \KIQ$. On the other extreme, Chen \cite[Theorem B]{C} over a left-Gorenstein ring, provides an equivalence between the triangulated categories $\K(\RGPrj)$ and $\K(\RGInj)$, that up to a natural isomorphism extends Iyengar-Krause's equivalence, when $R$ is a commutative Gorenstein ring.

The category of complexes of $R$-modules, denoted by $\C(R)$, is an abelian category with enough injective and enough projective objects. There has been several research articles dealing with the homological objects as well as their Gorenstein versions, i.e. Gorenstein projective, Gorenstein injective and Gorenstein flat complexes in this category, see e.g. \cite{R}, \cite{ER}, \cite{LZ}, \cite{EEI}.

Our aim in this paper is to study the homotopy categories of homological objects of this category and their connections to the homotopy category of Gorenstein homological objects of $R$. We may list our results in this paper as follow. For notations and terminology see the preliminaries section below.

\begin{itemize}
\item [$\bullet$] $\KPC$ is always well generated. It is compactly generated if so is $\KPR$. In this case, we present a compact generating set for $\KPC$, see Theorem \ref{main}.
\item [$\bullet$] The inclusion functor $\KPC \hookrightarrow \K(\C(R))$ admits a right adjoint, see Corollary \ref{rightadjoint}.
\item [$\bullet$] The class of Gorenstein projective complexes is precovering in $\C(R)$, provided $R$ is a commutative noetherian ring of finite Krull dimension, see Theorem \ref{Precovering}. Compare \cite{EEI}.
\item [$\bullet$] The inclusion $\iota: \K(\RGPrj) \lrt \KR$ has a right adjoint, provided $R$ is a commutative noetherian ring of finite Krull dimension, see Theorem \ref{adjoint}. Compare \cite[Theorem 2.7]{G}.
\item [$\bullet$] Similar result for injective (resp. Gorenstein injective) complexes will be discussed.
\end{itemize}
Furthermore, if $R$ is a commutative noetherian ring admitting a dualising complex, the following will be proved.
\begin{itemize}
\item [$\bullet$] There is a triangle-equivalence between triangulated categories $\KPC$ and $\KIC$, see Theorem \ref{IK-equivCompl}. This equivalence induces a triangle-equivalence between categories $\KTAPC$ and $\KTAIC$, see Proposition \ref{equivTotAcyc}.
\item [$\bullet$] The triangulated categories $\GPrj \ \underline{\C(R)}$ and $\GInj \ \overline{\C(R)}$ are compactly generated, see Corollary \ref{stableEquiv}.
\item [$\bullet$] There is a triangle-equivalence $\K(\RGPrj)\simeq \K(\RGInj)$, that restricts to an equivalence $\KPR \simeq \KIR$, see Theorem \ref{main2}.
\end{itemize}

\section{Preliminaries}
In this section we collect some of the notions and results that we need throughout the paper. Let us begin with the notion of triangulated categories.

\s {\sc Triangulated categories.}
Let $\CT$ be an additive category. $\CT$ is called a triangulate category if there exists an autoequivalence $\Sigma:\CT \rt \CT$ and a class $\Delta$ of diagrams of the form $A \rt B \rt C \rt \Sigma A$ in $\CT$ satisfying certain set of axioms. The basic references for the subject are \cite{Ve77} and \cite{N01}. We say that $\CT$ is a triangulated category with coproducts (or satisfying [TR5], in the language of Neeman) if it has arbitrary small coproducts, i.e. for any small set $\Ga$ and any collection $\{T_\ga, \ga\in \Ga\}$ of objects $T_\ga \in \CT$ indexed by $\Ga$, the categorical coproduct $\coprod_{\ga\in \Ga}T_{\ga}$ exists in $\CT$.

A triangulated subcategory $\CS$ of $\CT$ is called thick if it is closed under retracts. A thick subcategory $\CS$ of $\CT$ is called localizing (resp. colocalizing) if it is closed under all coproducts (resp. products) allowed in $\CT$. The intersection of the localizing subcategories of $\CT$ containing a class $\CS$ of objects is denoted by $\lan\CS\ran$.

\begin{s}{\sc Localization sequences.}\label{LocSeq}
Let $\CT$ be a localizing subcategory of $\CT'$. It follows from Bousfield localization (see e.g.
\cite[Theorem 9.1.13]{N01}) that the inclusion $\CT \lrt \CT'$ has a right adjoint if and only if for any object $X$ in $\CT$, there exists a triangle
$ \ \ \xymatrix@C-0.5pc{X' \ar[r] & X \ar[r] & X'' \ar@{~>}[r] & } \ $
in $\CT'$ with $X' \in \CT$ and $X'' \in \CT^\perp$. This triangle is unique up to isomorphism and the right adjoint of $\CT \lrt \CT'$ sends $X$ to $X'$. This is equivalent to say that the sequence $\CS \lrt \CT \lrt \CT/\CS$ of triangulated categories and triangulated functors is a localization sequence, \cite[\S II.2]{Ve77}. One may imagine the dual situation for colocalizing sequences. We just recall that when $\CT$ is a triangulated subcategory of $\CT'$, the left and right orthogonal of $\CT$ in $\CT'$ are defined,
respectively, by
\[ ^\perp\CT=\{ X \in \CT' \ | \ \Hom_{\CT'}(X,Y)=0, \ {\rm{for \ all}} \ Y \in \CT \},\]
\[ \CT^\perp=\{ X \in \CT' \ | \ \Hom_{\CT'}(Y,X)=0, \ {\rm{for \ all}} \ Y \in \CT \}.\]
\end{s}

\s {\sc Compactly generated triangulated categories.}\label{ComGenTriCat}
Let $\CT$ be a triangulated category with coproducts. An object $C$ of $\CT$ is called compact if any map from $C$ to an arbitrary coproduct, factors through a finite coproduct. This is equivalent to say that the representable functor $\CT(C, \ )$ commutes with coproducts. The full subcategory of all compact objects in $\CT$ form a thick subcategory that will be denoted by $\CT^c$.

Let $\CS$ be a set of objects of $\CT$. We say that $\CS$ generates $\CT$ if an object $T$ of $\CT$ is zero provided $\CT(S,T)=0$, for all $S \in \CS$. For equivalence conditions see \cite{N08}.

We say that $\CT$ is compactly generated if $\CT^c$ is essentially small and generates $\CT$.

Compactly generated triangulated categories are particularly useful. For instance, they allow us to use the Brown Representability Theorem and the Thomason Localization Theorem. A version of the Brown Representability Theorem that we shall use reads as follows. For proof see \cite[Theorem 4.1]{N96}, \cite[Theorem 8.6.1]{N01} and also
\cite[Proposition 3.3]{K05}.

\begin{lemma}\label{lBrown}
Let $F:\CT \rt \CT'$ be a triangulated functor between triangulated categories $\CT$ and $\CT'$, where $\CT$ is compactly generated.
\begin{itemize}
\item [$(1)$] $F$ admits a right adjoint if and only if it preserves all coproducts.
\item [$(2)$] $F$ admits a left adjoint if and only if it preserves all products.
\end{itemize}
\end{lemma}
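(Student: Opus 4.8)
The plan is to derive the lemma from the Brown Representability Theorem for compactly generated triangulated categories, which one would in turn establish by Neeman's homotopy-colimit construction. The two ``only if'' directions are formal: a functor possessing a right adjoint preserves all colimits, in particular coproducts, which gives the forward implication in $(1)$; dually, a functor possessing a left adjoint is itself a right adjoint and so preserves all limits, in particular products, giving the forward implication in $(2)$.

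For the ``if'' direction of $(1)$, fix $Y \in \CT'$ and consider
\[ H_Y = \Hom_{\CT'}(F(-), Y) : \CT^{\op} \lrt \Ab . \]
Since $F$ is triangulated and $\Hom_{\CT'}(-,Y)$ is cohomological, $H_Y$ is cohomological; and since $F$ preserves coproducts while $\Hom_{\CT'}(-,Y)$ takes coproducts to products, $H_Y$ sends coproducts in $\CT$ to products of abelian groups. The assertion is that every such $H_Y$ is representable, say by $G(Y)$, with a natural isomorphism $\Hom_\CT(-, G(Y)) \cong H_Y$; by Yoneda the rule $Y \mapsto G(Y)$ is then functorial and is a right adjoint of $F$.

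To prove representability, fix a set $\CS$ of compact generators of $\CT$, which we may take closed under $\Sigma^{\pm 1}$. One builds a sequence $X_0 \rt X_1 \rt \cdots$ with compatible elements $x_n \in H_Y(X_n)$: set $X_0 = \coprod_{S\in\CS}\coprod_{s \in H_Y(S)} S$, which carries a tautological $x_0$ rendering $\Hom_\CT(S, X_0) \rt H_Y(S)$, $f \mapsto H_Y(f)(x_0)$, surjective for all $S \in \CS$; and given $(X_n, x_n)$, let $K_n$ be the coproduct of one copy of $S$ for each pair $(S,f)$ with $S \in \CS$, $f : S \rt X_n$ and $H_Y(f)(x_n) = 0$, complete the canonical map $K_n \rt X_n$ to a triangle $K_n \rt X_n \rt X_{n+1} \rt \Sigma K_n$, and lift $x_n$ through the exact sequence $H_Y(X_{n+1}) \rt H_Y(X_n) \rt H_Y(K_n)$ (it restricts to $0$ in $H_Y(K_n)$ by construction) to $x_{n+1} \in H_Y(X_{n+1})$. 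Let $X$ be the homotopy colimit of the $X_n$, i.e. the cone of $1$ minus the shift map on $\coprod_n X_n$; applying the coproduct-to-product functor $H_Y$ and using exactness, the compatible system $(x_n)$ is the image of some $x \in H_Y(X)$. Now $\eta : \Hom_\CT(-, X) \rt H_Y$, $f \mapsto H_Y(f)(x)$, is an isomorphism on every $S \in \CS$: surjectivity is inherited from $X_0$, and if $f : S \rt X$ has $H_Y(f)(x) = 0$ then, $S$ being compact, $f$ factors through some $X_n$ and is then (a component of) the map $K_n \rt X_n$, hence already vanishes in $X_{n+1}$ and a fortiori in $X$. Finally, the full subcategory of objects on which $\eta$ is invertible is localizing --- closed under triangles by the five lemma, and under coproducts because both functors convert coproducts into products --- and contains $\CS$; since $\CS$ consists of compact generators, $\lan\CS\ran = \CT$, so $\eta$ is a natural isomorphism.

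Part $(2)$ is the formal dual, and this is where the genuine difficulty lies: since $\CT^{\op}$ need not be compactly generated, corepresenting $\Hom_{\CT'}(Y, F(-)) : \CT \rt \Ab$ --- a homological functor which preserves products by hypothesis --- is not routine; it is precisely \emph{Brown representability for the dual} of a compactly generated category, a theorem of Neeman, see \cite[Theorem 8.6.1]{N01} and \cite{N96}. Granting it, the corepresenting objects assemble, again by Yoneda, into a left adjoint of $F$. Thus the weight of the whole argument sits in that dual representability statement; everything else is the homotopy-colimit bookkeeping sketched above, which for the purposes of this paper we would simply cite.
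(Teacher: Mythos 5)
The paper offers no proof of this lemma; it is recalled with citations to \cite[Theorem 4.1]{N96}, \cite[Theorem 8.6.1]{N01} and \cite[Proposition 3.3]{K05}. Your sketch is precisely the standard argument contained in those references --- the formal ``only if'' directions, the homotopy-colimit proof of Brown representability yielding the right adjoint in $(1)$, and the correct identification that $(2)$ is the genuinely hard half, resting on Neeman's Brown representability for the dual of a compactly generated category --- so it is correct and consistent with the paper's purely bibliographic treatment.
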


\s {\sc Category of complexes.}
Let $\CX$ be an additive category. We denote by $\C(\CX)$ the category of complexes in $\CX$; the objects are complexes and morphisms are genuine chain maps. We write the complexes homologically, so an object of $\C(\CX)$ is of the following form
\[\cdots \rt X_{n+1} \st{\pa_{n+1}}{\rt} X_n  \st{\pa_{n}}{\rt} X_{n-1} \rt \cdots.\]
In case $\CA=\RMod$ is the category of (left) $R$-modules, where $R$ is an associative ring with identity, we write $\C(R)$ for $\C(\RMod)$. It is known that in case $\CA$ is additive (resp. abelian) then so is $\C(\CA)$. In particular, $\C(R)$ is an abelian category.

\s {\sc Homotopy category.}
Let $\CX$ be an additive category. The homotopy category of $\CX$, denoted $\K(\CX)$, is defined to have the same objects as in $\C(\CX)$ and morphisms are the homotopy classes of morphisms of complexes.

Let $\CA$ be an abelian category and $\CX$ be an additive subcategory of $\CA$, e.g. $\CX=\Prj\CA$, the subcategory of projective objects or $\CX=\Inj\CA$, the subcategory of injective objects of $\CA$. Then $\K(\CX)$ is a triangulated subcategory of $\CA$.

Let us consider the special case $\CA=\RMod$. In this case, we write $\KR$ for $\K(\RMod)$. If $\CX=\Prj(R)$ (resp. $\Inj(R)$, $\Flat(R)$), the subcategory of projective (resp. injective, flat) $R$-modules, then we may define the homotopy category $\KPR$, $\KIR$ and $\KFR$, respectively.

\s {\sc Total acyclicity.}\label{TotAcy}
Let $\CX$ be an additive category. A complex $X \in \C(\CX)$ is called acyclic if $\HT_n(X)=0$, for all $n \in \Z$. The triangulated subcategory of $\K(\CX)$ consisting of acyclic complexes will be denoted by $\K_{ac}(\CX)$.

A complex $X \in \C(\CX)$ is called totally acyclic if the induced complexes $\CX(X,Y)$ and $\CX(Y,X)$ of abelian groups are acyclic, for all $Y \in \CX$.

Let $\CA$ be an abelian category. If $\CX=\Prj \CA$ (resp. $\CX=\Inj \CA$), is the class of projectives (resp. injectives) in $\CA$, the objects will be called totally acyclic complexes of projectives (resp. totally acyclic complexes of injectives). The full subcategory of $\KA$ consisting of totally acyclic complexes of projectives (resp. of injectives) will be denoted by $\KTAP$ (resp. $\KTAI$).

An object $G \in \CA$ is called Gorenstein projective (resp. Gorenstein injective), if it is isomorphic to a syzygy of a totally acyclic complex of projectives (resp. injectives). We let $\GPrj \CA$ (resp. $\GInj \CA$) denote the full subcategory of $\CA$ consisting of Gorenstein projective (resp. Gorenstein injective) objects.

\s {\sc Projective, injective and flat complexes.}\label{Pro/Injcom}
Let $R$ be an associative ring with identity and $\C(R)$ denote the category of complexes over $R$. Let $X$ and $Y$ be two complexes. In what follows, $\Hom(X,Y)$ denotes the abelian group of chain maps from $X$ to $Y$. \\
A complex $P$ in $\C(R)$ is projective if the functor $\Hom(P, \ )$ is exact. This is equivalent to say that $P$ is exact and $Z_nP=\Ker(P_{n-1} \rt P_{n-2})$ is projective, for all $n \in \Z$, see \cite{R}. So, for any projective module $P$, the complex
\[\cdots \rt 0 \rt P \rt P \rt 0 \rt \cdots,\]
is projective. It is known that any projective complex can be written uniquely as a coproduct of such complexes. \\
Dually, a complex $I$ is injective if the contravariant functor $\Hom( \ ,I)$ is exact. Again it is known \cite{R} that $I$ is injective if and only if it is exact and $Z_nI$ is injective, for all $n \in \Z$. Therefore, if $I$ is an injective module, the complex
\[\cdots \rt 0 \rt I \rt I \rt 0 \rt \cdots\]
is injective. Furthermore, up to isomorphism, any injective complex is a direct product of such complexes. Note that
this direct product is in fact direct sum.

These facts imply that $\C(R)$ is an abelian category with enough projective and enough injective objects.

Finally recall that a complex $F \in \C(R)$ is flat if it is exact and for any $i \in \Z$ the module $\Ker(F_i \rt F_{i-1})$ is flat \cite[Theorem 4.1.3]{R}.

\s {\sc Gorenstein projective, Gorenstein injective and Gorenstein flat complexes.}\label{GorPro/Injcom}
Let $\C(R)$ denote the category of complexes over $R$. Based on \ref{TotAcy}, a complex $G \in \C(R)$ is called Gorenstein projective if there exists an exact sequence
\[\cdots \rt P_1 \rt P_0 \rt P_{-1} \rt \cdots \]
of projective complexes such that the sequence remains exact with respect to the functor $\Hom( \ ,P)$, for any projective complex $P$ and $G=\Ker(P_0 \rt P_{-1})$. Gorenstein injective complexes are defined dually.

We also recall that a complex $F \in \C(R)$ is called Gorenstein flat if there exists an exact sequence
\[\cdots \rt F_1 \rt F_0 \rt F_{-1} \rt F_{-2} \rt \cdots \]
such that each $F_i$ is a flat complex, $F=\Ker(F_0 \rt F_{-1})$ and the sequence remains exact under the functor $E\otimes - $, for any injective complex $E$ of right $R$-modules. For the right definition of tensor product see \cite[Theorem 4.1.3]{R}.

\s {\sc Evaluation functor and its extension.}\label{EvalFunc}
Let $\CX$ be an additive category and $\C(\CX)$ be its category of complexes. Let $i \in \Z$. There is an evaluation functor $e^i:\C(\CX) \rt \CX$, that restricts any complex $X$ to its $i$-th term $X_i$. It is known that, this functor admits both left and right adjoints, which we will denote by $e^i_{\la}$ and $e^i_{\rho}$, respectively. In fact, these adjoints are defined as follows. Given any object $M$ of $\CX$, $e^i_{\la}(M)$ (resp. $e^i_{\rho}(M)$) is defined to be the complex
\[\cdots \rt 0 \rt M \st{\id}{\rt} M \rt 0 \rt \cdots,\]
with the $M$ on the left hand side sits on the $i$-th position (resp. $(i+1)$-th term). It follows from the definition that $e^{i}_{\la}(M)=e^{i-1}_{\rho}(M)$.

When $\CX=\RMod$, we may deduce from \ref{Pro/Injcom} that, if $P$ is a projective (resp. injective) $R$-module then the complexes $e^i_{\la}(P)$ and $e^i_{\rho}(P)$ are both projective (resp. injective) complexes in $\C(R)$.

The important point here is that the functor $e^i:\C(R) \rt \RMod$ can be naturally extended to a triangulated functor
\[k^i:\K(\C(R)) \rt \KR.\]
For any complex $X$ in $\K(\C(R))$, we define $k^i(X)$ to be the complex $X^i \in \KR$ corresponds to the $i$-th row of the complex $X$. One can easily check that this extension also possesses left and right adjoints, denoted by $k^i_\la$ and $k^i_\rho$, respectively. In fact, these adjoints are the natural extensions of the functors $e^i_\la$ and $e^i_\rho$.

\s {\sc Covering/enveloping classes.}
Let $\CA$ be an abelian category and $\CX \subseteq \CA$ be a full additive subcategory which is closed under taking direct summands. Let $M$ be an object of $\CA$. A morphism $\varphi : X \rt M$ with $X \in \CX$ is called a right $\CX$-approximation (an $\CX$-precover) of $M$ if any morphism from an object $\CX$ to $M$ factors through $\varphi$. $\CX$ is called contravariantly finite (precovering) if any object in $\CA$ admits a right $\CX$-approximation. Left $\CX$-approximations ($\CX$-preenvelopes) and covariantly finite (preenveloping) subcategories are defined dually.

\s {\sc Cotorsion theory.}
Cotorsion pairs are introduced and studied by L. Salce \cite{S} in the category of abelian groups and found some more applications in different setting. They specially play an important role in the proof of the existence of flat covers. In what follows $\Ext^1(A,B)$, for $A,B \in \C(R)$, is defined to be the group of equivalence classes of all extensions
$0 \rt B \rt U \rt A \rt 0$ in $\C(R)$.

A pair $(\CX, \CY)$ of classes of objects of $\C(R)$ is said to be a cotorsion theory if $\CX^\perp=\CY$ and $\CX = {}^\perp\CY$, where the left and right orthogonal are defined as follows
\[{}^\perp\CY:=\{A \in \C(R) \ | \ \Ext^1(A,Y)=0, \ {\rm{for \ all}} \ Y \in \CY \}\]
and
\[\CX^\perp:=\{B \in \C(R) \ | \ \Ext^1(X,B)=0, \ {\rm{for \ all}} \ X \in \CX \}.\]

A cotorsion theory $(\CX, \CY)$ is called complete if for every $A \in \C(R)$ there exist exact sequences
\[0 \rt Y \rt X \rt A \rt 0 \ \ \ {\rm and} \ \ \  0 \rt A \rt Y' \rt X' \rt 0,\]
where $X, X'\in \CX$ and $Y, Y' \in \CY$.

\s \label{suspension} The following theorem of \cite[Theorem 3.5]{BEIJR} show that there is a tight connection between the complete cotorsion theories in the category of complexes of modules $\C(R)$ and the existence of adjoint functors on the corresponding homotopy categories. We will use it throughout the paper. Let us first recall a notion.

Let $X$ be a complex in $\C(R)$. The suspension of $X$, denoted by $\TS(X)$ is defined to be the shift of $X$ one degree to the left, that is for any $n \in \Z$, $\TS(X)_n=X_{n-1}$. The differentials of $\TS(X)$ are defined to be the same as the differentials of $X$ with a minus. We can define $S^i(X)$ for any $i \in \Z$. Let $\CX$ be a class of objects of $\C(R)$. We say that $\CX$ is closed under suspension, if for any $X \in \CX$ and any $i \in \Z$, we have $S^i(X) \in\CX$.\\

\noindent {\bf Theorem.}\ \cite[Theorem 3.5]{BEIJR}\label{TEnochs} Let $(\CX, \CY)$ be a complete cotorsion theory in $\C(R)$ such that $\CX$ is closed under taking suspensions. Then the inclusion functors $\K(\CX) \rt \K(\C(R))$ and $\K(\CY) \rt \K(\C(R))$ have right and left adjoints, respectively.\\

\noindent {\bf Notation.} \label{notation} Let $X$ be an object of $\K(\C(R))$. So it is a bicomplex. For any integers $i, j \in \Z$, we denote the $i$th column of $X$ by $X_i$ and the $j$th row by $X^j$. Hence $X_i^j$ denotes the module in the $i$th column and $j$th row. Indices decrease going to the right and downward. The horizontal differentials will be denoted by ${(h_x)}_i^j$ the vertical ones will be denoted by ${(v_x)}_i^j$. Note that by definition for any $i, j \in \Z$ we have ${(h_x)}_{i-1}^j {(h_x)}_i^j=0, {(v_x)}_i^{j-1} {(v_x)}_i^j=0$ and
${(h_x)}_i^{j-1}{(v_x)}_i^j={(v_x)}_{i-1}^j {(h_x)}_i^j$. We fix these notations throughout the paper.\\

\section{Homotopy category of projective complexes}
Let $R$ be ring. It was proved by J{\o}rgensen \cite{J05} that if $R$ is left and right coherent and every flat left $R$-module has finite projective dimension, then the category $\KPR$ is compactly generated. Next Neeman \cite{N08} made this result more general by reducing the assumption just to the right coherence of the ring. Moreover, he showed that $\KPR$ is always well generated. Our aim in this section is to show that for any ring $R$, the triangulated category $\KPC$ is compactly generated (resp. well generated) provided $\KPR$ is compactly generated (resp. well generated). So based on Neeman's result, $\KPC$ is always well generated and is compactly generated provided $R$ is right coherent.

Our strategy for the proof is is to build a compact generating set for $\KPC$ out of one for $\KPR$. So let $S$ be a compact generating set for $\KPR$. We show that the set  \[\fS=\{k^i_{\la}(P): P \in \CS, \ i \in \Z \}\] provides a compact generating set for $\KPC$. The compactness follows easily.

\begin{lemma}\label{kjh}
Let $P \in \KPR$ be a compact object. Then for any $i \in \Z$, $k^i_{\la}(P)$ is a compact object in $\KPC$.
\end{lemma}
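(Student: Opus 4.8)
The plan is to exploit the adjunction $(k^i_\la, k^i)$ from \ref{EvalFunc} together with the fact that $k^i$ preserves coproducts. First I would record the basic adjunction isomorphism: for any object $Y$ of $\KPC$ and any $i\in\Z$ one has
\[
\Hom_{\KPC}\bigl(k^i_\la(P), Y\bigr)\;\cong\;\Hom_{\KPR}\bigl(P, k^i(Y)\bigr),
\]
which is natural in $Y$. The point is that this natural isomorphism is compatible with coproducts on the $Y$-variable, because the left-hand side is a representable functor evaluated at $k^i_\la(P)$ and the right-hand side is the composition of the coproduct-preserving functor $k^i$ with the representable functor $\Hom_{\KPR}(P,-)$.

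The key step is to verify that $k^i:\KPC\rt\KPR$ preserves arbitrary small coproducts. This is where I would do the small amount of genuine work. A coproduct of a family $\{Y_\ga\}$ of projective complexes of $R$-modules (viewed as bicomplexes, hence objects of $\C(R)$, hence of $\KPC$) is formed degreewise, and the evaluation functor $e^i:\C(\C(R))\rt\C(R)$ visibly commutes with these degreewise coproducts; since passing to the homotopy category does not change the underlying coproduct (coproducts in $\K$ of an additive category are computed as in $\C$), the induced functor $k^i$ preserves coproducts. I expect this to be essentially formal, the only mild subtlety being to make sure that the coproduct in $\KPC$ really is computed as the termwise coproduct of bicomplexes — but this follows because $\Prj\,\C(R)$ is closed under coproducts in $\C(R)$ and $\K(\Prj\,\C(R))$ inherits coproducts from $\K(\C(R))$.

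Granting that $k^i$ preserves coproducts, the compactness of $k^i_\la(P)$ follows immediately: given any small family $\{Y_\ga\}$ in $\KPC$,
\[
\Hom_{\KPC}\Bigl(k^i_\la(P),\coprod_\ga Y_\ga\Bigr)
\;\cong\;\Hom_{\KPR}\Bigl(P, k^i\bigl(\textstyle\coprod_\ga Y_\ga\bigr)\Bigr)
\;\cong\;\Hom_{\KPR}\Bigl(P,\coprod_\ga k^i(Y_\ga)\Bigr),
\]
and since $P$ is compact in $\KPR$ this last group is $\coprod_\ga\Hom_{\KPR}(P,k^i(Y_\ga))\cong\coprod_\ga\Hom_{\KPC}(k^i_\la(P),Y_\ga)$, with the composite isomorphism being the canonical comparison map. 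Hence $k^i_\la(P)$ is compact. The main obstacle, such as it is, is purely the bookkeeping of the first paragraph of this section of the excerpt, namely checking that $e^i_\la(P)$ (equivalently $k^i_\la(P)$) is itself an object of $\KPC$, i.e. that applying $e^i_\la$ to a projective complex $P$ yields a projective \emph{complex of complexes}; but this is exactly the remark in \ref{EvalFunc} that $e^i_\la$ sends projectives to projectives, applied in $\C(R)$ in place of $\RMod$, together with the structure theorem for projective complexes recalled in \ref{Pro/Injcom}.
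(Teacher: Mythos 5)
Your proposal is correct and follows essentially the same route as the paper: the adjunction isomorphism $\Hom_{\KPC}(k^i_\la(P),\coprod_\al C_\al)\cong\Hom_{\KPR}(P,(\coprod_\al C_\al)^i)$, followed by compactness of $P$ and the adjunction again. The only difference is that you spell out the coproduct-preservation of $k^i$ (i.e.\ $(\coprod_\al C_\al)^i\cong\coprod_\al C_\al^i$), which the paper uses implicitly; that is a harmless and indeed welcome elaboration.
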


\begin{proof}
We show that the functor $\Hom_{\KPC}(k^i_{\la}(P), - )$ commutes with small coproducts. Let $\{C_{\al}\}_{\al\in I}$ be a set-indexed family of objects in $\KPC$. The claim follows from the following isomorphisms. Here, for simplicity, $\Hom$ means $\Hom_{\KPC}$.
\[\begin{array}{lll}
\Hom(k^i_{\la}(P),\coprod_{\al\in I}C_{\al}) & \cong \Hom(P,(\coprod_{\al \in I}(C_{\al}))^i)\\ & \cong \coprod_{\al \in I}\Hom(P,C_{\al}^i)\\ & \cong \coprod_{\al\in I}\Hom(k^i_{\la}(P),C_{\al}).
\end{array}\]
Just note that the first and the last isomorphisms follow from adjoint property and the second one follows because $P$ is compact.
\end{proof}

For the generating set we need some lemmas. Let us begin by fixing some notations. \\

\begin{notation}\label{const}
We define two subcategories of $\KPC$. Let $\KPC^{\geq n}$ (resp. $\KPC^{\leq n}$) denote the full subcategory of $\KPC$ consisting of all complexes $P$ with the property that $P^i=0$, for all $i<n$ (resp. for all $i>n$). Both of these subcategories are closed under the formation of mapping cones and also are closed under coproducts. So they are triangulated subcategories of $\KPC$ with coproducts.\\

Let $P$ be an object of $\KPC$. By definition, it is of the form
\[P: \cdots \rt P_{i+1} \rt P_i \rt P_{i-1} \rt \cdots,\]
where $P_i$, for any $i$, is a projective complex. By \ref{EvalFunc}, for any $i \in \Z$, the complex $P_i$ can be written as a direct sum $\coprod_{j \in \Z}e^j_{\la}(P^j_i)$, where for any $i$ and $j$ in $\Z$, $P^j_i$ is a projective $R$-module.

Fix $n \in \Z$. Let $P^{\geq n}$ denote the subcomplex of $P$ that is defined as follows. The $i$th column of $P^{\geq n}$ is defined by $(P^{\geq n})_i=\coprod_{j> n}e^j_{\la}(P^j_i)$. It can be check easily that this is indeed a subcomplex. We also consider the quotient complex $P/P^{\geq n}$, whose rows sitting in grades larger than $n$ are zero. Therefore $P^{\geq n} \in \KPC^{\geq n}$ and $P/P^{\geq n} \in \KPC^{\leq n}$.

It is easy to see that the short exact sequence
\[ 0 \rt P^{\geq n} \rt P \rt P/P^{\geq n} \rt 0 \] splits in each degree and so induces a triangle
\[ \ \ \xymatrix@C-0.5pc{P^{\geq n} \ar[r] & P \ar[r] & P/P^{\geq n} \ar@{~>}[r] & } \] in $\KPC$.
\end{notation}

Recall that an abelian category $\CA$ is called cocomplete if every direct system in $\CA$ has a direct limit in $\CA$. It is known that if $\CA$ is cocomplete, then $\KA$ is a triangulated category with coproducts. $\CA$ is called complete if every inverse system in $\CA$ has an inverse limit. If $\CA$ is complete, then $\KA$ is a triangulated category with products. In fact, if $\CA$ is cocomplete (resp. complete), then so is the abelian category $\C(\CA)$ and the canonical map $\C(\CA) \rt \K(\CA)$ preserves coproducts (resp. products).

\begin{lemma} \label{Kty}
\begin{itemize}
\item [$(i)$] Let $\CA$ be a cocomplete abelian category and \[X_1 \st{\mu_1}{\rt} X_2 \st{\mu_2}{\rt} X_3 \rt \cdots \] be a sequence of morphisms in $\C(\CA)$ with each $\mu_i$ a degree-wise split monomorphism. Assume that, for any $i\geq 1$, $X_i$ is a contractible complex. Then $\underset{\underset{i\geq 1}{\longrightarrow}}{\lim} X_i$ is contractible.
\item [$(ii)$] Let $\CA$ be a complete abelian category and \[\cdots \rt X_3 \st{\mu_3}{\rt} X_2 \st{\mu_1}{\rt} X_1\] be a sequence of morphisms in $\C(\CA)$ with each $\mu_i$ a degree-wise split epimorphism. Assume that, for any $i\geq 1$, $X_i$ is a contractible complex. Then $\underset{\underset{i\geq 1}{\longleftarrow}}{\lim} X_i$ is contractible.
\end{itemize}
\end{lemma}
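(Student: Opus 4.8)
### Proof Proposal for Lemma \ref{Kty}

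The plan is to prove part $(i)$ directly by constructing a contracting homotopy on the colimit, and then obtain part $(ii)$ by the evident dualization (replacing colimits of split monomorphisms by limits of split epimorphisms and reversing arrows). So I focus on $(i)$.

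First I would fix, for each $i \geq 1$, a contracting homotopy $s^{(i)}$ on $X_i$, i.e. a degree-one map with $\pa s^{(i)} + s^{(i)} \pa = \id_{X_i}$. The naive hope is that these assemble into a homotopy on the colimit, but there is no reason the $s^{(i)}$ are compatible with the transition maps $\mu_i$, so this fails as stated. The key device is to use the degree-wise split monomorphism hypothesis to \emph{correct} the homotopies inductively: since each $\mu_i : X_i \rt X_{i+1}$ is a split monomorphism in each degree, choose degree-wise splittings $\pi_i : X_{i+1} \rt X_i$ with $\pi_i \mu_i = \id_{X_i}$ (these need not be chain maps). Then I would build, inductively on $i$, contracting homotopies $t^{(i)}$ on $X_i$ such that $\mu_i t^{(i)} = t^{(i+1)} \mu_i$ for all $i$. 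Starting from $t^{(1)} = s^{(1)}$, at stage $i+1$ one is given $t^{(i)}$ on $X_i$ and must produce $t^{(i+1)}$ on $X_{i+1}$ with $t^{(i+1)} \mu_i = \mu_i t^{(i)}$; since $X_{i+1}$ is contractible, the obstruction to extending a partially-defined homotopy along the split mono $\mu_i$ vanishes, and one can write down an explicit formula for $t^{(i+1)}$ in terms of $s^{(i+1)}$, $\mu_i$, $\pi_i$ and $t^{(i)}$ (of the shape $t^{(i+1)} = \mu_i t^{(i)} \pi_i + (\text{correction terms making it a genuine contracting homotopy off the image of } \mu_i)$). This is the step I expect to be the main obstacle: getting a formula for $t^{(i+1)}$ that is simultaneously (a) a contracting homotopy for all of $X_{i+1}$ and (b) strictly compatible with $\mu_i t^{(i)}$ on the image. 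It is a finite but delicate algebraic manipulation, analogous to the standard comparison-of-homotopies argument.

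Once the compatible system $\{t^{(i)}\}$ is in hand, the transition maps in the direct system are the $\mu_i$, so by the universal property of the colimit the maps $t^{(i)}$ glue to a single degree-one endomorphism $t$ of $X := \Lim X_i$. Because each $\pa$ and each $\mu_i$ commutes with the colimit structure maps, and $\pa t^{(i)} + t^{(i)} \pa = \id_{X_i}$ holds at every stage, passing to the colimit gives $\pa t + t \pa = \id_X$. Hence $X$ is contractible, which is the assertion of $(i)$. Here I would use the remark preceding the lemma that in a cocomplete abelian category $\CA$, the category $\C(\CA)$ is again cocomplete and the filtered colimit is computed degree-wise, so that $\pa_X$ is literally $\Lim \pa_{X_i}$ and compatible maps on the $X_i$ descend to $X$.

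For part $(ii)$, everything dualizes: the diagram $\cdots \rt X_3 \st{\mu_3}{\rt} X_2 \st{\mu_1}{\rt} X_1$ has each $\mu_i$ a degree-wise split epimorphism, so one chooses degree-wise sections $\sigma_i$ with $\mu_i \sigma_i = \id$, and builds, starting from $t^{(1)}$ and going \emph{down} the tower, contracting homotopies $t^{(i)}$ on $X_i$ with $\mu_i t^{(i+1)} = t^{(i)} \mu_i$; the existence of $t^{(i+1)}$ refining $t^{(i)}$ along the split epi $\mu_i$ again uses contractibility of $X_{i+1}$. Since $\C(\CA)$ is complete when $\CA$ is and the inverse limit is taken degree-wise, the compatible system $\{t^{(i)}\}$ induces a degree-one map $t$ on $\underset{\underset{i\geq 1}{\longleftarrow}}{\lim} X_i$ with $\pa t + t \pa = \id$, proving the complex is contractible. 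No new idea beyond $(i)$ is needed; it is only a matter of reversing arrows and swapping ``split mono/splitting'' for ``split epi/section''.
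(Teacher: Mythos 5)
Your strategy of inductively rectifying the contracting homotopies so that they commute strictly with the transition maps, and then passing to the degree-wise (co)limit, can be made to work, but as written it contains a genuine gap at precisely the point you flag as ``the main obstacle'': you never construct the homotopy $t^{(i+1)}$ on $X_{i+1}$ satisfying both $\pa t^{(i+1)}+t^{(i+1)}\pa=\id$ and $t^{(i+1)}\mu_i=\mu_i t^{(i)}$, and the formula you sketch, $\mu_i t^{(i)}\pi_i+(\text{corrections})$, is not routine to complete because the degree-wise splitting $\pi_i$ is not a chain map, so the obvious candidates for the correction term fail the homotopy identity. The clean way to close the gap is to observe that a degree-wise split short exact sequence of complexes $0\rt X_i\st{\mu_i}{\rt}X_{i+1}\rt X_{i+1}/X_i\rt 0$ is determined, after a choice of degree-wise splittings, by a chain map $\theta:X_{i+1}/X_i\rt \Sigma X_i$ well defined up to homotopy, and splits in $\C(\CA)$ exactly when $\theta$ is null-homotopic; since $X_i$ is contractible this is automatic, so $X_{i+1}\cong X_i\oplus X_{i+1}/X_i$ as complexes with $\mu_i$ the inclusion of the first summand, the complement is contractible (rotate the associated triangle), and $t^{(i+1)}$ can be taken to be $\mu_i t^{(i)}\mu_i^{-1}$ on the first summand plus any contracting homotopy on the second. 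The dual observation (a degree-wise split sequence with contractible quotient also splits) gives the lifting step in part $(ii)$. Without some such argument the induction does not start.

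You should also compare this with the paper's proof, which bypasses all homotopy bookkeeping: the sequential colimit along degree-wise split monomorphisms sits in a degree-wise split exact sequence $0\rt\coprod_{i\geq 1}X_i\rt\coprod_{i\geq 1}X_i\rt\underset{\lrt}{\lim}X_i\rt 0$ (the telescope of \cite[Definition 1.6.4]{N01}), hence in a triangle of $\K(\CA)$ whose first two vertices are coproducts of contractible complexes and therefore zero in $\K(\CA)$; the third vertex is then zero as well. Part $(ii)$ uses the dual sequence involving products. If you repair the extension step your argument is correct and somewhat more explicit, but the triangle argument is both shorter and the one actually used in the text.
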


\begin{proof}
It follows from our assumption in part $(i)$ that we have a degree-wise split exact sequence
\[\xymatrix{0 \ar[r] & \coprod_{i\geq 1}X_i \ar[r] & \coprod_{i\geq 1}X_i \ar[r] & \displaystyle{\lim_{\lrt}}X_i \ar[r] & 0 }\]
of complexes, see \cite[Definition 1.6.4]{N01} or \cite[Remark 2.16]{M}.
This induces a triangle
\[\xymatrix{\coprod_{i\geq 1}X_i \ar[r] & \coprod_{i\geq 1}X_i \ar[r] & \displaystyle{\lim_{\lrt}}X_i \ar@{~>}[r] & }\] in $\K(\CA)$. Since for any $i \in \Z$, $X_i$ is contractible, so is the complex $\coprod_{i\geq 1}X^i$. This completes the proof of part $(i)$.

For part $(ii)$, we may use the degree-wise split exact sequence
\[\xymatrix{0 \ar[r] & \underset{\longleftarrow}{\lim} X_i \ar[r] & \prod_{i\geq 1}X_i \ar[r] & \prod_{i\geq 0}X_i \ar[r] & 0}\]
of complexes to get the induced triangle in $\K(\CA)$ and then apply the fact that $\prod_{i\geq 0}X_i$ is contractible.
\end{proof}

\begin{lemma}\label{uio}
Let $P \in \KPC^{\geq 0}$ and $Q \in \KPC^{\leq 0}$. Then
\begin{itemize}
\item [(i)] $P$ is contractible if and only if for all $i \in \Z$, $P^i$ is contractible.
\item[(ii)] $Q$ is contractible if and only if for all $i \in \Z$, $Q^i$ is contractible.
\end{itemize}
\end{lemma}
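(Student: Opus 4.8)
\emph{Proof plan.} Both ``only if'' directions are immediate and use no boundedness: a contracting homotopy of $P$ (resp.\ $Q$) in $\C(\Prj\,\C(R))$ is given by maps $s_n$ which are in particular chain maps of the columns, so their $i$-th rows are defined and $\mathrm{id}=hs+sh$ restricts in each row to a contracting homotopy; equivalently, apply the triangulated functor $k^{i}\colon\KPC\rt\KR$. So we may assume from now on that every row is contractible.

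For part (i), let $P\in\KPC^{\geq 0}$. Write $h$ for the horizontal (outer) differential of $P$ and $v$ for the vertical one, i.e.\ the differential of the columns $P_n$, each of which is a projective --- hence exact --- complex of projective modules. From exactness of the columns one gets, for every $j$, a termwise exact sequence of complexes of projective modules
\[0\rt Z^{j}\rt P^{j}\st{v}{\rt}Z^{j-1}\rt 0,\]
where $Z^{j}\subseteq P^{j}$ is the subcomplex of $v$-cycles of the row $P^{j}$. Fix a contracting homotopy $\tau^{j}$ of each (contractible) row $P^{j}$. I construct a contracting homotopy of $P$ by producing, by induction on $j\geq 0$ (and taking $s^{j}=0$ for $j<0$), row-maps $s^{j}=(s^{j}_n\colon P^{j}_n\rt P^{j}_{n+1})_n$ with $v\,s^{j}=s^{j-1}\,v$ --- which is exactly what makes the resulting maps $s_n$ chain maps of columns --- and $h\,s^{j}+s^{j}\,h=\mathrm{id}$. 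Set $s^{0}:=\tau^{0}$; the compatibility with $v$ is vacuous as $P^{-1}=0$. For the inductive step, put $E^{j}:=v\tau^{j}-s^{j-1}v$; a short computation with $hv=vh$ and the contracting identities for $\tau^{j}$ and (inductively) $s^{j-1}$ shows that $E^{j}$ is a chain map $P^{j}\rt\Sigma^{-1}P^{j-1}$ with $vE^{j}=0$, hence a chain map $P^{j}\rt\Sigma^{-1}Z^{j-1}$. As $P^{j}$ is termwise projective, $\Hom^{\bullet}(P^{j},-)$ carries the displayed sequence to a short exact sequence of complexes, and the obstruction to lifting $E^{j}$ through $\Sigma^{-1}P^{j}\twoheadrightarrow\Sigma^{-1}Z^{j-1}$ to an \emph{honest} chain map $\delta^{j}\colon P^{j}\rt\Sigma^{-1}P^{j}$ lies in $\Hom_{\KR}(P^{j},Z^{j})$, which is $0$ since $P^{j}$ is contractible. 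Then $s^{j}:=\tau^{j}-\delta^{j}$ restores $v\,s^{j}=s^{j-1}v$ and keeps $h\,s^{j}+s^{j}h=\mathrm{id}$. Assembling the $s^{j}$ over all $j$ yields the desired contracting homotopy of $P$.

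Part (ii) is the order-reversed argument: for $Q\in\KPC^{\leq 0}$ one builds $s^{j}$ by downward induction on $j\leq 0$, beginning with $s^{0}:=\tau^{0}$ (legitimate since $Q^{1}=0$). Here the columns are bounded above, exactness now giving termwise \emph{split} exact sequences $0\rt Z^{j-1}\rt Q^{j-1}\rt Z^{j-2}\rt 0$ of complexes of projective modules; the relation $v\,s^{j}=s^{j-1}v$ prescribes $s^{j-1}$ only on the cycle subcomplex $Z^{j-1}\subseteq Q^{j-1}$, and the remaining task is to \emph{extend} this partial chain homotopy to all of $Q^{j-1}$ while keeping $h\,s^{j-1}+s^{j-1}h=\mathrm{id}$; the obstruction to the extension lies in $\Hom_{\KR}(Z^{j-2},Q^{j-1})$, which vanishes because the row $Q^{j-1}$ is contractible. (Alternatively, using the triangles of Notation~\ref{const} and Lemma~\ref{Kty}, both parts reduce --- by induction on the number of nonzero rows --- to the one-row case, which is trivial: part (i) presents $P$ as an inverse limit of truncations along termwise split epimorphisms and invokes Lemma~\ref{Kty}(ii), while part (ii) presents $Q$ as a direct limit along termwise split monomorphisms and invokes Lemma~\ref{Kty}(i).)

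I expect the crux to be precisely this inductive correction: the chosen row-homotopies $\tau^{j}$ are in general incompatible with the vertical differential, and the whole point is to repair them, row by row, into a single homotopy that \emph{is} a chain map of columns. The repair rests on the elementary obstruction theory for lifting, resp.\ extending, chain maps along a termwise (split) exact sequence of complexes, and it is exactly at this point that the contractibility of each individual row gets consumed, while the boundedness hypothesis $P\in\KPC^{\geq 0}$ (resp.\ $Q\in\KPC^{\leq 0}$) is what supplies the base of the induction.
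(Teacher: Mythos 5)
Your main argument is correct, and it takes a genuinely different route from the paper's; your parenthetical ``alternatively'' is in fact exactly the proof the paper gives. The paper uses the coproduct-of-disks structure of each column to produce the degreewise split truncation filtration of Notation~\ref{const}, shows by induction on the number of nonzero rows that each finite truncation $P/P^{\geq i}$ is contractible (each layer being $k^i_{\la}(P^i)$, contractible because $k^i_{\la}$ is additive and $P^i\cong 0$ in $\KR$), and then passes to the inverse limit via Lemma~\ref{Kty}(ii) (resp.\ the direct limit via Lemma~\ref{Kty}(i) for part (ii)). You instead construct an explicit contracting homotopy of the bicomplex by repairing, row by row, arbitrary contracting homotopies $\tau^j$ of the rows so that they commute with the vertical differential; I checked the key identities, namely $hE^j+E^jh=v(h\tau^j+\tau^j h)-(hs^{j-1}+s^{j-1}h)v=0$ and $vE^j=v^2\tau^j-s^{j-2}v^2=0$, and that $s^j:=\tau^j-\delta^j$ indeed satisfies both $hs^j+s^jh=\id$ and $vs^j=s^{j-1}v$; the lifting (resp.\ extension) of chain maps along the termwise split sequence $0\rt Z^j\rt P^j\rt Z^{j-1}\rt 0$ is legitimate, with the obstruction killed by contractibility of the relevant row. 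What your approach buys: it is self-contained (no homotopy (co)limits, no Lemma~\ref{Kty}, and it uses only exactness of the columns and projectivity of their cycles rather than the full disk decomposition), it produces the homotopy explicitly, and it isolates precisely where each hypothesis is consumed --- row contractibility kills the obstructions, and the one-sided boundedness supplies the base of the induction. What the paper's route buys is brevity and reuse of Lemma~\ref{Kty}, which was proved for exactly this purpose. Both arguments are valid.
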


\begin{proof}
We just prove $(i)$. The proof of $(ii)$ is similar. The `only if' part is clear. For the `if' part, assume that $P^i$ is contractible, for all $i \in \Z$. In view of the above notation, we have a chain
\[\cdots \lrt P/P^{\geq 3} \st{\mu_3}{\lrt} P/P^{\geq 2} \st{\mu_1}{\lrt}  P/P^{\geq 1},\]
of quotient complexes of $P$ such that each $\mu_i$ is degree-wise split epimorphism and $P=\underset{\underset{i\geq 1}{\longleftarrow}}{\lim} P/P^{\geq i}$. The result follows by Lemma \ref{Kty}, if we show that each $P/P^{\geq i}$ is contractible. We do this by induction on $i$. Assume first that $i=1$. By definition we have $P/P^{\geq 1}=k^1_{\la}(P^1)$. Hence, the additivity of $k^1_{\la}$ in conjunction with our assumption, implies that $P/P^{\geq 1}$ is contractible. Assume inductively that $i>1$ and the result has been proved for integers smaller than $i$. The degree-wise split exact sequence
\[0 \lrt P^{\geq i-1}/P^{\geq i} \lrt P/P^{\geq i} \lrt P/P^{\geq i-1} \lrt 0\]
induces an exact triangle
\[ \ \ \xymatrix@C-0.5pc{P^{\geq i-1}/P^{\geq i} \ar[r] & P/P^{\geq i} \ar[r] & P/P^{\geq i-1} \ar@{~>}[r] & } \ \]
in $\KPC$. Hence, in view of the induction assumption, it suffices to show that $P^{\geq i-1}/P^{\geq i}$ is contractible. This follows from the fact that
\[P^{\geq i-1}/P^{\geq i} \cong k^{i}_{\la}(P^i)\] and $P^i$ is contractible. This completes the inductive step and hence the proof.
\end{proof}

Now we are ready to state and prove our main theorem in this section.

\begin{theorem}\label{main}
Let $R$ be a ring such that $\KPR$ is compactly generated. Then $\KPC$ is compactly generated. Moreover, if $\CS$ is a compact generating set for $\KPR$, the set \[\fS=\{k^i_{\la}(P): P \in \CS, \ i \in \Z \}\] provides a compact generating set for $\KPC$.
\end{theorem}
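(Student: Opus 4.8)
The statement has two halves: that each element of $\fS$ is compact in $\KPC$, and that $\fS$ generates $\KPC$. The first is precisely Lemma~\ref{kjh}, and $\fS$ is essentially small because $\CS$ is and $\Z$ is countable; so the entire content is that $\fS$ generates, after which ``$\KPC$ is compactly generated with generating set $\fS$'' is immediate. Since $\fS$ consists of compact objects, it generates exactly when $\Hom_{\KPC}(k^i_\la(P),X)=0$ for all $P\in\CS$ and all $i\in\Z$ forces $X\cong 0$. Fix such an $X$. By the adjunction $k^i_\la\dashv k^i$ of \ref{EvalFunc} one has $\Hom_{\KPC}(k^i_\la(P),X)\cong\Hom_{\KPR}(P,k^i(X))=\Hom_{\KPR}(P,X^i)$, where $X^i$ is the $i$-th row of $X$ (a complex of projective modules, hence an object of $\KPR$); so $\Hom_{\KPR}(P,X^i)=0$ for every $P\in\CS$, and since $\CS$ generates $\KPR$ we get $X^i\cong 0$ in $\KPR$, i.e. every row $X^i$ of $X$ is contractible. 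The remaining --- and real --- task is to upgrade ``all rows contractible'' to ``$X$ contractible''.

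For this I would use the triangle $X^{\geq 0}\rightarrow X\rightarrow X/X^{\geq 0}\rightsquigarrow$ of Notation~\ref{const}, where $X^{\geq 0}\in\KPC^{\geq 0}$ and $X/X^{\geq 0}\in\KPC^{\leq 0}$: it is enough to see that both outer terms are contractible, and by Lemma~\ref{uio} (which is tailor-made for exactly these one-sided bounded situations) this reduces to checking their rows. For $X^{\geq 0}$ the rows in degrees $>0$, and for $X/X^{\geq 0}$ the rows in degrees $<0$, are honest rows of $X$, hence contractible. The only rows left are the two ``boundary'' rows produced by the truncation --- the complexes assembled from the disk-generator modules of the columns of $X$, which are not literally rows of $X$. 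Using the degree-wise split exact sequences recorded in \ref{const} together with $X^i\cong 0$ in $\KPR$, these two boundary complexes differ only by a suspension in $\KPR$, so it suffices to treat one of them; and for that I would run precisely the limit argument of Lemma~\ref{Kty}, as in the proof of Lemma~\ref{uio}, expressing the boundary complex as a colimit along degree-wise split monomorphisms, or a limit along degree-wise split epimorphisms, of complexes built from the contractible $X^i$. Once $X^{\geq 0}$ and $X/X^{\geq 0}$ are known to be contractible, the triangle gives $X\cong 0$, so $\fS$ generates and the theorem follows.

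The step I expect to be the main obstacle is exactly this control of $X^{\geq 0}$ and $X/X^{\geq 0}$ when $X$ is unbounded in the row direction. Lemma~\ref{uio} has one-sided boundedness built into its hypotheses, so after truncating we are fine on the ``interior'' rows; but the truncated objects are genuinely new bicomplexes, their boundary rows must still be shown contractible, and morally one is being asked to glue the infinitely many row-by-row contracting homotopies coming from $X^i\cong 0$ into a single contracting homotopy of the bicomplex $X$ --- which is possible only via the $\varinjlim$/$\varprojlim$ bookkeeping of Lemma~\ref{Kty}. Everything else --- the adjunction, the reduction to rows, the compactness argument --- is formal.
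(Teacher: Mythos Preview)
Your overall strategy is correct and close to the paper's, but you have organised the argument in a way that manufactures a difficulty the paper simply sidesteps. The paper does \emph{not} take an arbitrary $X\in\fS^\perp$, truncate it, and then try to verify the hypotheses of Lemma~\ref{uio} for the two truncated pieces. Instead it splits $\fS=\fS_1\cup\fS_2$ with $\fS_1=\{k^i_\la(P):P\in\CS,\ i>0\}$ and $\fS_2=\{k^i_\la(P):P\in\CS,\ i\le 0\}$, and shows that $\fS_1$ is a compact generating set for the half-bounded subcategory $\KPC^{\ge 0}$ and $\fS_2$ for $\KPC^{\le 0}$. In each case Lemma~\ref{uio} is applied to an object $Y$ that is \emph{already} in $\KPC^{\ge 0}$ (respectively $\KPC^{\le 0}$), so the rows one has to check are the honest rows of $Y$ --- no truncation has been performed, and no ``boundary row assembled from disk generators'' ever appears. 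Only after $\langle\fS_1\rangle=\KPC^{\ge 0}$ and $\langle\fS_2\rangle=\KPC^{\le 0}$ are in hand does the paper invoke the triangle of Notation~\ref{const} for a general $X$, and at that point the conclusion $\langle\fS\rangle=\KPC$ follows formally, with no further appeal to Lemma~\ref{uio}. The key device you are missing is the standard equivalence, for a set $\fS$ of compact objects, between ``$\fS$ generates'' and ``$\langle\fS\rangle$ is the whole category''; working with localizing subcategories rather than with $\fS^\perp$ is exactly what lets the paper avoid your boundary-row bookkeeping.

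Your proposed fix for the boundary rows --- observing that the two boundary complexes differ by a suspension (true, via the degreewise-split row-$0$ sequence and $X^0\simeq 0$), and then ``running the limit argument of Lemma~\ref{Kty}'' --- is not convincing as written. Knowing that $(X^{\ge 0})^0$ and $(X/X^{\ge 0})^0$ are shifts of one another does not by itself make either one contractible, and it is not clear how to realise either boundary complex as a (co)limit along degreewise-split maps of complexes you already know to be contractible; the $K^m=\ker(v\colon X^m\to X^{m-1})$ are only known to be mutual shifts, not zero, and Lemma~\ref{Kty} needs the \emph{terms} of the system to be contractible. So this step, which you correctly flag as the obstacle, is a genuine gap in your write-up. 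The paper's reorganisation is precisely the clean way around it.
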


\begin{proof}
By Lemma \ref{kjh}, for any $i \in \Z$ and any $P \in \CS$, $k^i_{\la}(P)$ is compact. Set
\[\fS_1=\{k^i_{\la}(P): P \in \CS, \ i > 0 \}.\]
We claim that $\fS_1$ is a compact generating set for $\KPC^{\geq 0}$. To prove the claim, we just need to show that it is a generating set. To this end, let $X \in \KPC^{\geq 0}$ be such that $\displaystyle{\Hom_{\KPC}(k^i_{\la}(P),X)=0}$ for any element $k^i_{\la}(P)$ of $\fS_1$. By the adjoint isomorphism, we get $\Hom_{\KPR}(P,X^i)=0$ for all $i$ and all $P\in \CS$. Therefore $X^i=0$ in $\KPR$, for all $i \in \Z$. Hence Lemma \ref{uio}, implies that $X=0$. This completes the proof of the claim. Consequently $\lan\fS_1\ran=\KPC^{\geq 0}$. Similarly, one can show that the set \[\fS_2=\{k^i_{\la}(P): P \in S, \ i \leq 0 \},\] is a compact generating set for $\KPC^{\leq 0}$ and deduce that $\lan\fS_2\ran=\KPC^{\leq 0}$. Since $\fS_1 \subseteq \fS$ and $\fS_2 \subseteq \fS$, $\lan\fS_1\ran \subseteq \lan\fS\ran$ and $\lan\fS_2\ran \subseteq \lan\fS\ran$. On the other hand, in view of the Notation \ref{const}, any object $X$ of $\KPC$ fits into a triangle, in which the end terms are in $\KPC^{\geq 0}$ and $\KPC^{\leq 0}$. This implies that $\lan\fS\ran=\KPC$, or equivalently, $\fS$ is a compact generating set for $\KPC$.
\end{proof}

By \cite[Proposition 7.14]{N08}, if $R$ is right coherent, $\KPR$ is compactly generated. So in view of the above theorem, we have the following result.

\begin{corollary}\label{RightCoherent}
Let $R$ be a right coherent ring. Then $\KPC$ is compactly generated.
\end{corollary}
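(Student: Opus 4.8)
The plan is to reduce this immediately to Theorem \ref{main}. By that theorem, $\KPC$ is compactly generated as soon as $\KPR$ is compactly generated, and moreover an explicit compact generating set for $\KPC$ can be manufactured from one for $\KPR$. So the only thing that remains is to check that the homotopy category $\KPR$ of projective $R$-modules is itself compactly generated under the hypothesis that $R$ is right coherent. For this I would invoke Neeman's theorem \cite[Proposition 7.14]{N08}, which asserts precisely this: over a right coherent ring $\KPR$ is compactly generated, with a compact generating set provided (up to the usual identifications) by the deleted projective resolutions of the finitely presented $R$-modules.

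With both ingredients in place the argument is a two-line composition. Right coherence of $R$ yields, via \cite[Proposition 7.14]{N08}, a compact generating set $\CS$ for $\KPR$; then Theorem \ref{main} applies verbatim and shows that $\KPC$ is compactly generated, with $\fS=\{k^i_{\la}(P): P\in\CS,\ i\in\Z\}$ serving as a compact generating set. There is no substantive obstacle at this stage: the real content has already been carried out, on the one hand in Theorem \ref{main} (compactness of the $k^i_{\la}(P)$ from Lemma \ref{kjh}, and the generation statement obtained by the column-wise dévissage of Notation \ref{const} and Lemma \ref{uio}), and on the other hand in Neeman's coherence argument behind \cite[Proposition 7.14]{N08}. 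The only care needed is to cite the correct form of Neeman's result and to observe that the hypothesis of Theorem \ref{main} is met; neither of the two inputs needs to be reproved.
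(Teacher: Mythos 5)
Your proposal is correct and follows exactly the paper's own route: the corollary is the immediate composition of Neeman's result \cite[Proposition 7.14]{N08} (right coherence of $R$ gives a compact generating set for $\KPR$) with Theorem \ref{main}, which then hands you the compact generating set $\fS=\{k^i_{\la}(P): P\in\CS,\ i\in\Z\}$ for $\KPC$. Nothing further is needed.
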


\s {\sc Well generated triangulated categories.}
Well generated triangulated categories are introduced by Neeman \cite{N01}, in order to provide a vast generalization of Brown's representability theorem.

Let $\CT$ be a triangulated category with coproducts. An object $C$ of $\CT$ is called $\al$-small, for some regular cardinal $\al$, if every map $C \rt \coprod_{i \in I}T_i$ factors through $\coprod_{i \in J}T_i$, for some $J \subseteq I$ with $\card J<\al$. The full subcategory $\CT$ consisting of all $\al$-small objects is denoted by $\CT^{\al}$.

Let $\CS$ be a triangulated subcategory of $\CT$. $\CS$ is called $\al$-localizing if any coproduct of fewer than $\al$ objects of $\CS$ lies in $\CS$. It is clear that $\CS$ is localizing if it is $\al$-localizing for every infinite cardinal $\al$.

For any cardinal $\al$ we use ${\lan\CS\ran}^{\al}$ to denote the smallest $\al$-localizing subcategory of $\CT$ containing $\CS$. Again it is clear that \[\lan\CS\ran=\bigcup_{\al}{\lan\CS\ran}^{\al}.\]

\begin{definition}
Let $\CT$ be a triangulated category with coproducts and $\al$ be a regular cardinal. $\CT$ is called $\al$-compactly generated if the category $\CT^{\al}$ is essentially small and ${\lan\CT^{\al}\ran}^{\al}=\CT$. $\CT$ is called well generated if it is $\al$-compactly generated, for some regular cardinal $\al$.
\end{definition}

It is clear that any well generated triangulated category is $\beta$-compactly generated, for all sufficiently large $\beta$. The well generated categories share many important properties with the compactly generated triangulated categories. For an equivalent definition see \cite[Theorem A]{K01}.\\

\begin{corollary}
Let $R$ be a ring. Then $\KPC$ is well generated.
\end{corollary}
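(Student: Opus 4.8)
The plan is to carry out the proof of Theorem \ref{main} with the notion of compactness replaced systematically by that of $\al$-smallness for a suitably large regular cardinal $\al$, the role of a compact generating set being played by a perfect generating set in the sense of \cite[Theorem A]{K01}. By Neeman's theorem \cite{N08}, $\KPR$ is well generated, so one may fix an uncountable regular cardinal $\al$ together with a perfect generating set $\CS$ for $\KPR$ whose members are $\al$-small. I would then show that
\[\fS=\{k^i_{\la}(P): P \in \CS, \ i \in \Z \}\]
is a perfect generating set for $\KPC$ consisting of $\al$-small objects; by \cite[Theorem A]{K01} this implies that $\KPC$ is well generated.

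Two of the three points to check are routine adaptations of Section 3. First, that every member of $\fS$ is $\al$-small is the exact analogue of Lemma \ref{kjh}: a morphism $k^i_{\la}(P) \rt \coprod_{\beta \in I} C_{\beta}$ in $\KPC$ corresponds, via the $(k^i_{\la},k^i)$-adjunction and the fact that $k^i$ preserves coproducts, to a morphism $P \rt \coprod_{\beta \in I} k^i(C_{\beta})$ in $\KPR$; since $P$ is $\al$-small this factors through $\coprod_{\beta \in J} k^i(C_{\beta})$ for some $J \subseteq I$ with $\card J < \al$, and transporting back through the adjunction shows that the original morphism factors through $\coprod_{\beta \in J} C_{\beta}$. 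Second, that $\fS$ generates $\KPC$ follows the argument in the proof of Theorem \ref{main}: writing $\fS_1$ and $\fS_2$ for the subsets of $\fS$ with $i>0$ and $i\leq 0$, the adjunction together with Lemma \ref{uio} shows that $\fS_1$ and $\fS_2$ generate $\KPC^{\geq 0}$ and $\KPC^{\leq 0}$ respectively; being perfect sets of $\al$-small objects (by the argument below, applied to these two subcategories), they satisfy $\lan\fS_1\ran = \KPC^{\geq 0}$ and $\lan\fS_2\ran = \KPC^{\leq 0}$, and the triangle relating $X^{\geq 0}$, $X$ and $X/X^{\geq 0}$ from Notation \ref{const} then gives $\lan\fS\ran = \KPC$, so that $\fS$ is a generating set. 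Note that the inductive steps in Lemmas \ref{Kty} and \ref{uio} involve only countable (co)limits, which is harmless since $\al$ is uncountable.

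The one genuinely new point, and the step I expect to be the main obstacle, is the perfectness of $\fS$. Suppose $\{f_{\beta} : X_{\beta} \rt Y_{\beta}\}_{\beta \in I}$ is a family of morphisms in $\KPC$ such that every map $\Hom_{\KPC}(\Sigma^n k^i_{\la}(P), X_{\beta}) \rt \Hom_{\KPC}(\Sigma^n k^i_{\la}(P), Y_{\beta})$ is surjective. Using that $k^i$ is triangulated and preserves coproducts, so that $k^i(\Sigma^n X_{\beta}) = \Sigma^n k^i(X_{\beta})$ and $k^i(\coprod_{\beta} X_{\beta}) = \coprod_{\beta} k^i(X_{\beta})$, the $(k^i_{\la},k^i)$-adjunction rewrites this hypothesis as surjectivity of $\Hom_{\KPR}(\Sigma^n P, k^i(X_{\beta})) \rt \Hom_{\KPR}(\Sigma^n P, k^i(Y_{\beta}))$ for all $P \in \CS$ and all $i,n \in \Z$. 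Since $\CS$ is a perfect set in $\KPR$, the induced map $\Hom_{\KPR}(\Sigma^n P, \coprod_{\beta} k^i(X_{\beta})) \rt \Hom_{\KPR}(\Sigma^n P, \coprod_{\beta} k^i(Y_{\beta}))$ is surjective, and undoing the adjunction returns the required surjectivity of $\Hom_{\KPC}(\Sigma^n k^i_{\la}(P), \coprod_{\beta} X_{\beta}) \rt \Hom_{\KPC}(\Sigma^n k^i_{\la}(P), \coprod_{\beta} Y_{\beta})$. All the care here lies in keeping suspensions, rows and coproducts aligned through the two adjunctions; everything else is a transcription of Section 3 into the well-generated setting. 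In particular, applying the same translation to the maps $0 \rt Y_{\beta}$ shows that the right orthogonal of $\fS$ in $\KPC$ is closed under coproducts, which is what legitimises the generation step of the previous paragraph.
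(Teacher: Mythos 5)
Your proposal is correct and follows essentially the same route as the paper, whose proof simply cites Neeman's theorem that $\KPR$ is always well generated and says to ``follow the above argument'' of Theorem \ref{main}; you have supplied the details that the paper leaves to the reader, namely replacing compactness by $\al$-smallness and compact generating sets by perfect generating sets via \cite[Theorem A]{K01}. The verification that $\fS$ is perfect and consists of $\al$-small objects via the $(k^i_{\la},k^i)$-adjunction is precisely the bookkeeping the paper's two-line proof implicitly assumes, and it goes through as you describe.
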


\begin{proof}
It was proved by Neeman \cite[Theorem 5.9]{N08} that $\KPR$ is always well generated. Now one may follow the above argument to deduce the result.
\end{proof}

This, in particular implies the following corollary.

\begin{corollary}\label{rightadjoint}
For any ring $R$, the inclusion $\iota: \KPC \rt \K(\C(R))$ has a right adjoint.
\end{corollary}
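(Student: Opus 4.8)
The plan is to invoke the Brown representability theorem (Lemma~\ref{lBrown}(1)) for the inclusion functor $\iota : \KPC \rt \K(\C(R))$. The preceding corollary establishes that $\KPC$ is well generated, and the Brown representability theorem holds for well generated (not merely compactly generated) triangulated categories; this is precisely the generalization due to Neeman that is cited in the subsection on well generated categories. So the only thing that needs checking is that $\iota$ preserves all small coproducts.

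The key step is therefore to verify that coproducts in $\KPC$ agree with coproducts in $\K(\C(R))$, i.e. that $\KPC$ is a localizing subcategory of $\K(\C(R))$. Here I would argue as follows. The category $\C(R)$ is both complete and cocomplete (it is a category of complexes over $\RMod$), so by the discussion preceding Lemma~\ref{Kty}, $\K(\C(R))$ is a triangulated category with coproducts, and the canonical functor $\C(\C(R)) \rt \K(\C(R))$ preserves coproducts. Thus the coproduct of a family $\{P_\al\}_{\al \in I}$ of objects of $\KPC$, computed in $\K(\C(R))$, is represented by the degree-wise (column-wise) coproduct of the underlying bicomplexes. It remains to observe that a coproduct of projective complexes is again a projective complex: by \ref{Pro/Injcom}, any projective complex is a coproduct of complexes of the form $\cdots \rt 0 \rt P \st{\id}{\rt} P \rt 0 \rt \cdots$ with $P$ a projective module, and this class is patently closed under arbitrary coproducts. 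Hence each column of $\coprod_{\al \in I} P_\al$ is projective, so the coproduct lies in $\KPC$, and $\KPC$ is closed under the coproducts of $\K(\C(R))$. Consequently the inclusion $\iota$ preserves coproducts, and Lemma~\ref{lBrown}(1) yields the desired right adjoint.

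I do not anticipate a serious obstacle here, since the corollary is a routine consequence of the machinery already in place; the only point requiring a little care is the identification of coproducts in $\KPC$ with those in $\K(\C(R))$, which rests on the closure of projective complexes under coproducts recalled in \ref{Pro/Injcom}. One could equally well phrase the argument by noting that $\KPC$ is a localizing subcategory of $\K(\C(R))$ and applying Bousfield localization (\ref{LocSeq}) together with well generatedness, but the Brown representability route via Lemma~\ref{lBrown} is the most direct.
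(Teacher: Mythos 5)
Your proposal is correct and takes essentially the same route as the paper, which simply cites Neeman's Brown representability theorem for well generated categories (\cite[Corollary 5.10]{N08}) together with the facts that $\KPC$ is well generated and that $\iota$ preserves coproducts. Your explicit verification that $\iota$ preserves coproducts --- via the closure of projective complexes under coproducts, so that $\KPC$ is localizing in $\K(\C(R))$ --- is a detail the paper leaves implicit, and your remark that Lemma \ref{lBrown} must be upgraded from the compactly generated to the well generated setting is exactly the point of the citation to \cite{N08}.
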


\begin{proof}
The result follows from \cite[Corollary 5.10]{N08}, because $\KPC$ is well generated and $\iota$ preserves coproducts. \end{proof}

\begin{remark}
The results of this section can be stated in a dual manner to prove that $\KIC$ is compactly generated, provided $\KIR$ is compactly generated. In fact, if $\CS$ is a compact generating set for $\KIR$, the set
\[\fS=\{k^i_{\rho}(I): I \in \CS, \ i \in \Z \}\] provides a compact generating set for $\KIC$.

We know by \cite{K05} that $\KIR$ is compactly generated, provided $R$ is a noetherian ring. So in this case, $\KIC$ is compactly generated. Note that if $R$ is a noetherian ring, $\C(R)$ is a locally noetherian Grothendieck category. So one may conclude directly from Proposition 2.3 of \cite{K05} that $\KIC$ is compactly generated. However, here we provide a compact generating set for $\KIC$ that will be used in our next results in this paper.
\end{remark}

\section{Gorenstein projective precover of complexes}
In \cite{EEI}, the authors proved that if a commutative noetherian ring admits a dualising complex, then every right bounded complex of modules has a Gorenstein projective precover. In this section we generalize this result to any complex, not necessarily right bounded, over any commutative noetherian ring of finite Krull dimension.

First, let us provide a characterization of totally acyclic complexes of projectives and injectives in $\K(\C(R))$ in terms of the associated row complexes.

\begin{proposition}\label{qwe}
Let $P \in \KPC$ and $I \in \KIC$. Then
\begin{itemize}
\item[(i)] $P \in \KTAPC$ if and only if $P^j \in \KTAPR$ for any $j \in \Z.$
\item[(ii)] $I \in \KTAIC$ if and only if $I^j \in \KTAIR$ for any $j \in \Z.$
\end{itemize}
\end{proposition}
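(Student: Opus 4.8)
The plan is to reduce each equivalence to a statement about the row complexes $P^j$ by testing total acyclicity against the "stalk" projective complexes $e^i_\la(Q)$, which generate $\Prj\,\C(R)$, and to import the row-by-row contractibility criterion already established in Lemma \ref{uio} together with the triangles of Notation \ref{const}. I will only write $(i)$; part $(ii)$ is dual, replacing $e^i_\la$ by $e^i_\rho$, projectives by injectives, and the functor $\Hom(P,-)$ arguments by $\Hom(-,I)$.

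\medskip
\noindent\textit{The "only if" direction.} Suppose $P\in\KTAPC$, so that for every projective complex $Q$ both $\Hom_{\C(R)}(P,Q)$ and $\Hom_{\C(R)}(Q,P)$ are acyclic complexes of abelian groups. I must show each row $P^j$ is a totally acyclic complex of projective $R$-modules, i.e. that $\Hom_R(P^j,M)$ and $\Hom_R(M,P^j)$ are acyclic for every projective $R$-module $M$. The point is that testing against a projective $R$-module $M$ is the same as testing the bicomplex $P$ against the stalk projective complex $e^i_\la(M)$ (or $e^i_\rho(M)$), for a suitable shift of $i$: by the adjunctions of \ref{EvalFunc} we have
\[\Hom_{\C(R)}\!\bigl(P,\,e^i_\rho(M)\bigr)\;\cong\;\Hom_R\!\bigl(e^i(P),M\bigr)\;=\;\Hom_R\!\bigl(P_i,M\bigr),\]
and similarly $\Hom_{\C(R)}(e^i_\la(M),P)\cong\Hom_R(M,P_i)$. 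One then has to sort out how acyclicity of the $\Hom$-complex built from the $P_i$ (the columns) translates, via the standard rearrangement of a bicomplex, into acyclicity of the row complexes; here I would use that $P$, being a complex of projective \emph{complexes}, has columns that split degreewise into stalks $\coprod_j e^j_\la(P^j_i)$ (as recorded in Notation \ref{const}), so the horizontal and vertical data decouple enough to extract the claim for each fixed row $P^j$.

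\medskip
\noindent\textit{The "if" direction.} Conversely assume $P^j\in\KTAPR$ for all $j$. First, $P$ is acyclic: acyclicity of a bicomplex can be checked row by row once we know the rows are eventually zero in each column-degree direction — and here the truncations of Notation \ref{const} give, for each $n$, a triangle $P^{\geq n}\to P\to P/P^{\geq n}$ with the quotient lying in $\KPC^{\leq n}$; arguing as in Lemma \ref{uio} (with "contractible" replaced by "acyclic") and passing to the limit $P=\varprojlim P/P^{\geq i}$ via Lemma \ref{Kty}, acyclicity of all $P^j$ propagates to acyclicity of $P$. Second, for total acyclicity I must show $\Hom_{\C(R)}(P,Q)$ and $\Hom_{\C(R)}(Q,P)$ are acyclic for every projective complex $Q$. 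Since every projective complex is a coproduct of stalk complexes $e^i_\la(R^{(\Lambda)})$ — equivalently $e^i_\rho(\text{projective})$ — and $\Hom$ out of $P$ (resp. into $P$) turns coproducts in the second (resp. first) variable of the relevant shape into products, it suffices to treat $Q=e^i_\rho(M)$ and $Q=e^i_\la(M)$ with $M$ projective. Then the same adjunction identities as above reduce the acyclicity of $\Hom_{\C(R)}(P,Q)$ to acyclicity of $\Hom_R(P_i,M)$; but $P_i=\coprod_j e^j_\la(P^j_i)$, so $\Hom_R(P_i,M)$ decomposes and its acyclicity follows from total acyclicity of the rows $P^j$, where the $Z_n$-level projectivity of projective complexes (as in \ref{Pro/Injcom}) is used to control the pieces. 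The case $\Hom_{\C(R)}(Q,P)$ is handled symmetrically.

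\medskip
\noindent\textit{Main obstacle.} The routine parts are the adjunction manipulations and the reduction to stalk complexes; the delicate point is the bicomplex bookkeeping — making precise how "acyclicity of a $\Hom$-complex assembled from the columns $P_i$" is equivalent to "acyclicity of each row $P^j$ of $\Hom$", given that a projective object of $\C(R)$ is not merely a complex with projective entries but an acyclic one with projective cycles. I expect to lean on the explicit splitting $P_i=\coprod_{j}e^j_\la(P^j_i)$ and on the filtration/limit argument of Lemmas \ref{Kty} and \ref{uio} to convert statements about $P$ into statements holding in each row, uniformly in $j$.
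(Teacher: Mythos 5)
Your proposal is correct and follows essentially the same route as the paper: both reduce total acyclicity of $P$ to that of its rows via the adjunction $\Hom_{\C(R)}(P_n, e^j_\rho(M)) \cong \Hom_R(P^j_n, M)$ (and its companion for $e^j_\la$), together with the decomposition of an arbitrary projective complex into a product of stalk complexes $e^j_\rho(M)$ with $M$ projective. The one step you flag as delicate is in fact immediate: naturality of this adjunction in the first variable identifies the complex $\Hom_{\C(R)}(P,e^j_\rho(M))$ with $\Hom_R(P^j,M)$ outright, so neither the column splittings $P_i=\coprod_j e^j_\la(P^j_i)$ nor the filtration and limit arguments of Lemmas \ref{Kty} and \ref{uio} are actually needed.
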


\begin{proof}
We just prove part $(i)$. Part $(ii)$ follows similarly. For the `only if' part, it is sufficient to prove that for any projective $R$-module $Q$ and any $n \in\Z$, the sequence \[\Hom_R(P^j_{n-1},Q)\lrt\Hom_R(P^j_{n},Q)\lrt\Hom_R(P^j_{n+1},Q)\] is exact. This follows from the following diagram that exists thanks to the adjoin pair $(k^j,k^j_{\rho})$ and also the fact that the upper row is exact.
\[ \xymatrix@R-0.5pc{\Hom_{\C(R)}(P_{n-1},k_{\rho}^j(Q)) \ar[r] \ar[d]^{\simeq} & \Hom_{\C(R)}(P_n,k_{\rho}^j(Q)) \ar[r] \ar[d]^{\simeq} & \Hom_{\C(R)}(P_{n+1},k_{\rho}^j(Q)) \ar[d]^{\simeq} \\
\Hom_R(P^j_{n-1},Q) \ar[r] & \Hom_R(P^j_{n},Q) \ar[r] & \Hom_R(P^j_{n+1},Q).}\]

The `if' part can be settled in the same way. Just note that any projective complex is a product of projective complexes of the form $e^j_{\rho}(P)$, where $P$ is a projective module and $j \in \Z$. Here we used the fact that, for any $j \in \Z$, $e^j_{\la}(-)=e^{j-1}_{\rho}(-)$.
\end{proof}

Recall that if $\CT$ is a triangulated category with suspension functor $\Sigma$, an additive functor $\HT:\CT \rt \Ab$ is said to be homological provided it sends triangles in $\CT$ to long exact sequences in $\Ab$, where $\Ab$ denotes the category of Abelian groups \cite[Definition 1.1.7]{N01}. The kernel of $\HT$, denoted $\Ker \HT$, is then the full subcategory of $\CT$ consisting of all objects $X$ such that $\HT(\Sigma^nX)=0$, for all integers $n$.

\begin{theorem}\label{Tmar}
Let $\CT$ be a compactly generated triangulated category. Let $\HT:\CT \rt \Ab$ be a coproduct-preserving homological functor. Then the inclusion $\Ker(\HT)\rt\CT$ admits a right adjoint.
\end{theorem}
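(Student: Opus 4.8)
The plan is to realise $\Ker(\HT)$ as a localizing subcategory of $\CT$ that is generated by a \emph{set} of objects, and then to apply the Bousfield localization principle recalled in \ref{LocSeq} (in the form valid for well generated categories, cf.\ \cite{N01}). First, set $\mathcal{N}:=\Ker(\HT)$. Since $\HT$ is homological it sends a triangle to a long exact sequence of abelian groups, so $\mathcal{N}$ satisfies the two-out-of-three property and is a triangulated subcategory of $\CT$; since $\HT$ is additive, $\HT\big(\Sigma^n(A\oplus B)\big)\cong\HT(\Sigma^nA)\oplus\HT(\Sigma^nB)$, so $\mathcal{N}$ is thick; and since $\HT$ preserves coproducts, $\HT\big(\Sigma^n\coprod_iX_i\big)\cong\coprod_i\HT(\Sigma^nX_i)$, so $\mathcal{N}$ is closed under coproducts. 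Thus $\mathcal{N}$ is a localizing subcategory, and by \ref{LocSeq} it is enough to construct, for every $X\in\CT$, a triangle $X'\lrt X\lrt X''\lrt\Sigma X'$ with $X'\in\mathcal{N}$ and $X''\in\mathcal{N}^{\perp}$; the sought right adjoint then sends $X$ to $X'$.

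The heart of the matter is to exhibit a generating set for $\mathcal{N}$. Fix a set $\CG$ of compact generators of $\CT$ and a regular cardinal $\al>\aleph_0$ with $\al>\card\,\CG$ and $\al>\card\,\Hom_{\CT}(C,C')$, $\al>\card\,\HT(\Sigma^nC)$ for all $C,C'\in\CG$ and all $n\in\Z$. I would then show that $\mathcal{N}$ is generated, as a localizing subcategory, by the set $\CS$ of representatives of isomorphism classes of its $\al$-small objects --- a set, because $\CT$, being compactly generated, has up to isomorphism only a set of $\al$-small objects --- and hence that $\mathcal{N}$ is $\al$-compactly generated, so well generated. The substance of this step is the approximation statement that every morphism $\Sigma^nC\lrt N$ with $C\in\CG$ and $N\in\mathcal{N}$ factors through some $\al$-small object that itself lies in $\mathcal{N}$; I would prove it by a transfinite construction modelled on Neeman's proof that a localizing subcategory generated by a set is well generated \cite[Ch.~8]{N01} and on Krause's analysis of kernels of homological functors, the decisive input being that $\HT$, being homological and coproduct-preserving, commutes with the relevant homotopy colimits, so that an $\al$-indexed colimit of $\al$-small objects assembled inside $\CT$ out of fewer than $\al$ compact cells can be arranged to be annihilated by $\HT$ while still receiving the given map.

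With such a generating set $\CS$ in hand, the usual cellular (Bousfield) construction --- carried out with $\al$-small objects in place of compact ones, which is legitimate in a well generated category --- yields for each $X$ a morphism $X'\lrt X$ with $X'$ built by iterated $\al$-small coproducts and cones from shifts of objects of $\CS$, so that $X'\in\lan\CS\ran=\mathcal{N}$, and inducing isomorphisms $\Hom_{\CT}(\Sigma^mS,X')\cong\Hom_{\CT}(\Sigma^mS,X)$ for all $S\in\CS$ and $m\in\Z$; the cone $X''$ of $X'\lrt X$ then lies in $\CS^{\perp}=\mathcal{N}^{\perp}$, which is exactly the triangle required by \ref{LocSeq}. (Alternatively, one may simply quote Neeman's Bousfield localization theorem for well generated triangulated categories.) The main obstacle is the middle step --- the transfinite bookkeeping needed to keep the cellular approximations inside $\Ker(\HT)$ while still approximating arbitrary maps out of the compact generators of $\CT$ --- and it is precisely there that the choice of the regular cardinal $\al$ and the coproduct-preservation of $\HT$ enter in an essential way.
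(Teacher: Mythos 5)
The paper does not actually prove this theorem; its ``proof'' is a two-line citation of Margolis \cite[\S 7]{M} and Krause \cite{K10}. Your outline --- observe that $\Ker(\HT)$ is a localizing subcategory, show it is well generated by a cardinality/transfinite-cell argument in which the coproduct-preservation of $\HT$ guarantees that $\HT$ commutes with the relevant homotopy colimits, and then invoke Bousfield localization (equivalently Brown representability for well generated categories) --- is precisely the argument of those cited sources, so it reconstructs the intended proof correctly; the only caveat is that the decisive middle step (that $\Ker(\HT)$ is generated by its $\al$-small objects) is sketched rather than carried out, but that is exactly the content of Krause's cohomological localization theorem and of Margolis's treatment.
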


\begin{proof}
The theorem is proved by Margolis \cite[\S 7]{M}. Also one may find a proof in \cite[\S 6]{K10}.
\end{proof}

\begin{proposition}\label{cxz}
Let $R$ be a commutative ring of finite Krull dimension. Then the inclusion $\KTAPC \hookrightarrow \KPC$ admits a right adjoint.
\end{proposition}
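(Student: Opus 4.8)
The plan is to exhibit $\KTAPC$ as a localizing subcategory of $\KPC$ which is generated by a \emph{set} of objects. Since $\KPC$ is well generated for every ring $R$ (shown earlier in \S3), the inclusion of any such subcategory automatically admits a right adjoint, by Neeman's localization theory for well generated triangulated categories \cite{N01}; concretely, via \ref{LocSeq}, for each $P$ one then gets a triangle $P'\rt P\rt P''\rt\Sigma P'$ with $P'$ in the subcategory and $P''$ in its right orthogonal. (One could alternatively try to present $\KTAPC$ as the kernel of a coproduct-preserving homological functor and invoke Theorem~\ref{Tmar} whenever $\KPC$ happens to be compactly generated, e.g. for $R$ noetherian; the decisive ingredient is the same in both approaches.)

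First I would reduce to the row complexes. By Proposition~\ref{qwe}, $P\in\KPC$ lies in $\KTAPC$ exactly when $k^{j}(P)=P^{j}\in\KTAPR$ for every $j\in\Z$. As total acyclicity of complexes of projective $R$-modules is preserved under coproducts (arbitrary products of exact complexes of abelian groups are exact), suspensions and retracts, $\KTAPC$ is a localizing subcategory of $\KPC$. I also record that the family $\{k^{j}\}_{j\in\Z}$ is jointly conservative on $\KPC$: if every row of $P$ is contractible, then so is every row of $P^{\ge 0}$ and of $P/P^{\ge 0}$ — by Notation~\ref{const} these rows are finite direct sums of consecutive rows of $P$ — hence by Lemma~\ref{uio} both outer terms of the triangle $P^{\ge 0}\rt P\rt P/P^{\ge 0}\rt\Sigma P^{\ge 0}$ are contractible, so $P=0$ in $\KPC$.

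The crucial input — and the only place the hypothesis on $R$ enters — is that over a commutative ring of finite Krull dimension $\KTAPR$ is generated, as a localizing subcategory of $\KPR$, by a set $\mathcal{G}_{R}$ of objects, which we may take closed under suspension; here finite Krull dimension is used through the Gruson--Raynaud bound $\pd_{R}F\le\dim R$ for flat $R$-modules $F$ together with a dimension-shifting argument. Granting this, put $\fS_{0}=\{\,k^{j}_{\la}(G)\ :\ G\in\mathcal{G}_{R},\ j\in\Z\,\}$, which is a set. Each $k^{j}_{\la}(G)$ has, by construction, its only nonzero rows equal to copies of $G\in\KTAPR$, so Proposition~\ref{qwe} gives $k^{j}_{\la}(G)\in\KTAPC$; hence $\lan\fS_{0}\ran\subseteq\KTAPC$.

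For the reverse inclusion, let $r$ be the right adjoint of $\lan\fS_{0}\ran\hookrightarrow\KPC$ (it exists because $\lan\fS_{0}\ran$ is a localizing subcategory of the well generated $\KPC$ generated by a set). Given $P\in\KTAPC$, complete $r(P)\rt P$ to a triangle $r(P)\rt P\rt P''\rt\Sigma r(P)$ with $P''\in\lan\fS_{0}\ran^{\perp}\subseteq\fS_{0}^{\perp}$. Applying the triangulated functor $k^{j}$: since $P$ and $r(P)$ lie in $\KTAPC$ and $\KTAPR$ is a triangulated subcategory of $\KPR$, we get $k^{j}(P'')\in\KTAPR$; and by adjunction $\Hom_{\KPR}(G,k^{j}(P''))\cong\Hom_{\KPC}(k^{j}_{\la}(G),P'')=0$ for all $G\in\mathcal{G}_{R}$, so $k^{j}(P'')=0$ in $\KPR$ for every $j$. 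By joint conservativity $P''=0$, whence $P\cong r(P)\in\lan\fS_{0}\ran$. Thus $\KTAPC=\lan\fS_{0}\ran$, and its inclusion into $\KPC$ has a right adjoint. I expect the real work to be the third paragraph, namely producing the generating set $\mathcal{G}_{R}$ for $\KTAPR$ over a ring of finite Krull dimension (equivalently, realizing $\KTAPR$ as the kernel of a coproduct-preserving homological functor); the remaining steps are formal manipulations with the evaluation functors $k^{j}$, $k^{j}_{\la}$ and the well generation of $\KPC$.
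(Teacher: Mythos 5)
Your argument is correct in outline but takes a genuinely different route from the paper's. The paper works directly on $\KPC$: it observes that $\KPR$, hence $\KPC$, is compactly generated, realizes $\KTAPC$ as the kernel of the coproduct-preserving homological functor $\HT(P)=\HT_0(P)\oplus\HT_0(P\otimes I)$ with $I=\bigoplus_{\mathfrak{p}}{\rm E}(R/\mathfrak{p})$ --- the identification $\Ker\HT=\KTAPC$ being \cite[Lemma 4.3]{MS} applied row by row through Proposition \ref{qwe} --- and then quotes Theorem \ref{Tmar}. You instead lift a generating set from $\KTAPR$ to $\KTAPC$ through the functors $k^{j}_{\la}$, mimicking the proof of Theorem \ref{main}, and invoke localization theory for well generated categories; your verification that $\lan\fS_0\ran=\KTAPC$ (via the adjunction $(k^j_\la,k^j)$ and two-out-of-three in the triangle $r(P)\rt P\rt P''$) is sound. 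What your approach buys is a concrete generating set for $\KTAPC$ and independence from the specific homological functor; what it costs is that everything is funneled through the module-level statement about $\KTAPR$.

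Two caveats. First, the step you defer --- a set $\mathcal{G}_R$ generating $\KTAPR$ as a localizing subcategory of $\KPR$ --- is not a consequence of the Gruson--Raynaud bound alone; it is exactly the Murfet--Salarian characterization \cite[Lemma 4.3]{MS} (totally acyclic $=$ acyclic with $-\otimes I$ acyclic) combined with Margolis's theorem applied to $\KPR$. This is the same citation carrying the paper's proof, and it is where the finite Krull dimension (together with the section's standing noetherian hypothesis, needed for compact generation of $\KPR$) actually enters; until it is supplied your proof is not shorter than the paper's, only reorganized. Second, your justification of the joint conservativity of $\{k^j\}$ is inaccurate at the boundary row: with the decomposition of Notation \ref{const}, the $0$-th rows of $P^{\geq 0}$ and of $P/P^{\geq 0}$ are only graded pieces of $P^0$, not direct summands of it as complexes, so their contractibility does not follow formally from that of $P^0$. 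The clean fix is to deduce conservativity from Theorem \ref{main} itself: if every $P^j$ is contractible then $\Hom_{\KPC}(k^j_\la(Q),P)\cong\Hom_{\KPR}(Q,P^j)=0$ for all $j$ and all $Q$ in a compact generating set $\CS$ of $\KPR$, and $\fS$ generates $\KPC$, so $P=0$.
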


\begin{proof}
First note that by Corollary \ref{RightCoherent}, $\KPR$ is compactly generated. Thus by Theorem \ref{main}, $\KPC$ is also compactly generated. We define a homological functor $\KPC \rt \Ab$ with kernel $\KTAPC$. Then the result follows from Theorem \ref{Tmar}. To this end, set $I=\oplus_{\mathfrak{p}} {\rm E}(R/{\mathfrak{p}})$, where $\mathfrak{p}$ runs over all prime ideals of $R$. Let $P \in \KPC$. We define $P\otimes I$ to be the induced complex obtaining from $P$ by tensoring any row (or equivalently any column) of $P$ with $I$. So for any $i \in \Z$, $(P\otimes I)^i={P^i}\otimes I$. For any $P \in \KPC$, define $\HT(P)$ by setting
\[\HT(P)=\HT_0(P)\oplus \HT_0(P\otimes I),\]
where $\HT_0$ is the $0$-th homology functor. Obviously this defines a homological functor $\HT:\KPC \lrt \Ab$. Note that an object $P$ of $\KPC$ belongs to the kernel of $\HT$ if and only if it is acyclic and for any $i$ and $j$, $\HT_j(P^i\otimes I)=0$. Now, one may apply Lemma 4.3 of \cite{MS} to get that $P\in\Ker\HT$ if and only if for any $i \in \Z$, $P^i$ is a totally acyclic complex of projective $R$-modules. Hence, by Proposition \ref{qwe}, $\Ker\HT=\KTAPC$. The proof is now complete.
\end{proof}

Once we know the existence of the adjoint for the inclusion $\KTAPC \rt \KPC$, we may apply an argument similar to \cite[\S 2]{J07} verbatim to prove the following theorem. We leave the details to the reader. One just should note that we have not used here the characterizations of Gorenstein projective complexes to show that their class is precovering, see Remark \ref{characterization}, below.

\begin{theorem}\label{Precovering}
Let $R$ be a ring of finite Krull dimension. Then the class of Gorenstein projective complexes is precovering in $\C(R)$.
\end{theorem}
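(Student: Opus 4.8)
The plan is to run the argument of \cite[\S 2]{J07} with the ambient abelian category taken to be $\CA=\C(R)$, which has enough projective objects by \ref{Pro/Injcom}. The only structural input that argument requires is a right adjoint to the inclusion $\KTAPC\hookrightarrow\K(\C(R))$, and this is already in hand: Corollary \ref{rightadjoint} provides a right adjoint to $\KPC\hookrightarrow\K(\C(R))$, while Proposition \ref{cxz} (this is where the hypothesis on $R$ is used) provides a right adjoint to $\KTAPC\hookrightarrow\KPC$, so the composite $\mathbf{t}:\K(\C(R))\lrt\KTAPC$ is right adjoint to $\KTAPC\hookrightarrow\K(\C(R))$. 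Write $\varepsilon$ for its counit.

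I would then build the precover as follows. Given $M\in\C(R)$, regard $M$ as an object of $\K(\C(R))$ concentrated in a single degree, set $T:=\mathbf{t}(M)\in\KTAPC$, so that $T$ is a totally acyclic complex of projective complexes equipped with a counit morphism $\varepsilon_{M}:T\rt M$ in $\K(\C(R))$, and let $G$ be the corresponding syzygy of $T$; by \ref{GorPro/Injcom} the complex $G$ is Gorenstein projective. Since the target is concentrated in one degree, a chain map representing $\varepsilon_{M}$ induces, by restriction to that syzygy, a morphism $\phi:G\rt M$ in $\C(R)$. Using that $\C(R)$ has enough projectives, also fix an epimorphism $\pi:Q\rt M$ with $Q$ a projective complex. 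I claim that $(\phi,\pi):G\oplus Q\lrt M$ is a Gorenstein projective precover; note first that $G\oplus Q$ is Gorenstein projective, because $G$ is and every projective complex is Gorenstein projective.

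For the precover property, take any Gorenstein projective complex $N$ and a morphism $\psi:N\rt M$. Writing $N$ as the corresponding syzygy of a totally acyclic complex of projective complexes $T'$, the morphism $\psi$ corresponds to a chain map $T'\rt M$ in $\K(\C(R))$, which by the adjunction for $\mathbf{t}$ factors as $T'\st{g}{\rt}T\st{\varepsilon_{M}}{\rt}M$. Passing to syzygies, $g$ induces a morphism $\bar{g}:N\rt G$, and a direct check shows that $\phi\bar{g}-\psi$ factors through a projective complex appearing as a term of $T'$. Lifting that projective complex along $\pi$ yields $q:N\rt Q$ with $\phi\bar{g}-\psi=\pi q$, whence $\psi=(\phi,\pi)\circ(\bar{g},-q)$ is the desired factorization through $(\phi,\pi)$.

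The one genuinely delicate point — and the reason one must enlarge $G$ by the projective summand $Q$ — is expected to be the bookkeeping in passing from homotopy classes to honest morphisms: a null-homotopic chain map between complexes of projective complexes need not induce the zero morphism on a fixed syzygy, only one that factors through a projective complex, and likewise homotopic chain maps into a one-term complex induce morphisms $N\rt M$ differing by a map that factors through a projective complex. Once $\mathbf{t}$ is available this is handled exactly as in \cite[\S 2]{J07} and involves no new idea, so I would reproduce those computations essentially verbatim and leave the routine verifications to the reader. It is worth emphasizing that, in contrast with \cite{EEI}, this approach uses only the abstract definition of a Gorenstein projective complex and at no point an explicit structural description of such complexes.
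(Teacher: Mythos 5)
Your proposal is correct and is essentially the paper's own proof: the paper likewise reduces the theorem to the existence of a right adjoint to $\KTAPC\hookrightarrow\KPC$ (Proposition \ref{cxz}, combined with Corollary \ref{rightadjoint}) and then invokes the argument of \cite[\S 2]{J07} verbatim, leaving the details to the reader, while also stressing---as you do---that no structural characterization of Gorenstein projective complexes is needed. The only point to watch in your write-up is the degree bookkeeping (a chain map from a totally acyclic complex $T'$ to a stalk complex placed in degree $n$ corresponds to a module map out of $Z_{n-1}T'$, so $M$ should sit in degree $1$ if you want to produce maps out of $Z_0T'$ rather than restrict to it), but this is precisely the routine verification that you, like the paper, defer to \cite{J07}.
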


Towards the end of this section, we plan to provide an outline of the proof of the fact that the argument we presented to show the existence of Gorenstein projective precoves can be dualized to show that the class of Gorenstein injective complexes are preenveloping. To this end we need the following general lemma of \cite{K05}.\\

\begin{lemma}\cite[Lemma 7.3]{K05}\label{krause}
Let $\CA$ be a locally noetherian Grothendieck category and suppose that $\D(\CA)$ is compactly generated. Then the inclusion $\iota:\KTAI \rt \KI$ has a left adjoint $\iota_{\la}: \KI \rt \KTAI$.
\end{lemma}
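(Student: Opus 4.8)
The plan is to deduce the existence of $\iota_{\la}$ from the representability machinery available over $\KI=\K(\Inj\CA)$. The essential input is that $\KI$ is compactly generated; this is one of the main results of \cite{K05}, obtained there from the compact generation of $\D(\CA)$ together with the localization sequence $\KAI\rt\KI\rt\D(\CA)$ whose right-hand functor is the canonical one and whose kernel, the stable derived category $\KAI$, is itself compactly generated. I would take this package as given.

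With that in hand, I would argue as follows. For an injective object $E$ write $\bar E$ for the stalk complex with $E$ in degree $0$; a direct computation identifies
\[\Hom_{\KI}(\Sigma^{n}\bar E,X)\ \cong\ H_{n}\bigl(\Hom_{\CA}(E,X)\bigr)\qquad(X\in\KI,\ n\in\Z).\]
Hence, once $X$ is known to be acyclic, $X$ being \emph{totally} acyclic is precisely the vanishing of these groups for all injective $E$ and all $n$; and since $\CA$ is locally noetherian, every injective is a coproduct of indecomposable ones, the indecomposable injectives form a set, and a product of acyclic complexes of abelian groups is again acyclic, this condition may be tested against a \emph{set} of objects. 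Combining this with the localization sequence above (which detects acyclicity), one realizes $\KTAI$ as cut out inside a compactly generated category by orthogonality against a set; the associated Bousfield localization — available by \ref{LocSeq} and \ref{lBrown} — then produces, for each $X$, a triangle $X'\rt X\rt X''\rt\Sigma X'$ with $X''\in\KTAI$ and $X'\in{}^{\perp}\KTAI$, and $X\mapsto X''$ is the desired left adjoint $\iota_{\la}$. Equivalently, one checks that $\KTAI$ is compactly generated and that $\iota$ preserves products (a degreewise product of totally acyclic complexes of injectives is again one: it is a complex of injectives; it stays acyclic under $\Hom_{\CA}(-,E)$ since that functor is exact and injectives cogenerate; and $\Hom_{\CA}(E,-)$ commutes with the product), and then applies Lemma \ref{lBrown}(2).

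The main obstacle is the behaviour of products in a general locally noetherian Grothendieck category: the step ``a product of totally acyclic complexes of injectives is acyclic'' — equivalently, the full recollement $\D(\CA)\rightleftarrows\KI\rightleftarrows\KAI$ — and, for the second formulation, the compact generation of $\KTAI$ itself, for which one exhibits generators built from injective resolutions of a set of noetherian generators of $\CA$. When $\CA$ is a module category the product issue evaporates, since products are then exact, and the argument is short; in general it is the technical heart of \cite{K05}, which I would invoke rather than reprove.
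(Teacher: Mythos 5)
This lemma is imported verbatim from \cite[Lemma 7.3]{K05} and the paper offers no proof of its own, so the only ``approach'' to match is the citation; your reconstruction of Krause's argument --- reduce total acyclicity to orthogonality against a \emph{set} (acyclicity detected via the localization $\KI\rt\D(\CA)$, plus vanishing of $\HT_n\bigl(\Hom_{\CA}(E,-)\bigr)$ for the set of indecomposable injectives, using that $\CA$ is locally noetherian and that products are exact in $\Ab$) and then apply Bousfield localization in the compactly generated category $\KI$ --- is accurate, and you correctly isolate the genuinely hard points (compact generation of $\KI$, the recollement with $\D(\CA)$, and the behaviour of products) as the content of \cite{K05}. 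The one small imprecision is that Lemma \ref{lBrown} requires a compactly generated \emph{source}, while the stalk complexes of indecomposable injectives need not be compact in $\KI$, so the first formulation should invoke the well generated form of Brown representability (as the paper does elsewhere via \cite[Corollary 5.10]{N08}); this does not affect the substance.
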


Let us go back to our situation and assume that $R$ is a noetherian ring. The noetherianness of $R$ implies that the category $\C(R)$ is a locally noetherian Grothendieck category. So to be able to apply the above lemma, one just needs to show that the derived category $\D(\C(R))$ is compactly generated. This can be achieved in the similar way as in the Theorem \ref{main} and using the following known fact: Let $X \in \D(\C(R))$. $X\cong 0$ if and only if for all $i \in \Z$, $X^i \simeq 0$ in $\D(R)$, or equivalently, $X^i$ is exact, for all $i \in \Z$.

So Lemma \ref{krause} can be applied to show that the inclusion $\iota:\KTAIC \rt \KIC$ has a left adjoint. An standard argument now can be applied to show that the class of Gorenstein injective complexes in $\C(R)$ is preenveloping.

\section{The homotopy categories $\K(\RGPrj)$ and $\K(\RGInj)$}
In this section we plan to study the homotopy category of Gorenstein projective and the homotopy category of Gorenstein injective $R$-modules. We divide the section into three subsections. In the first one we study the existence of adjoints and show that the inclusion $\iota: \K(\RGPrj) \lrt \KR$ (resp. $\iota: \K(\RGInj) \lrt \KR$) has a right (resp. left) adjoint. In the second one we show that when $R$ is a commutative noetherian ring admitting a dualising complex, there exists a triangle-equivalence $\KPC \simeq \KIC$. This will have a list of corollaries. In the last subsection, we show that under the same condition on the ring $R$, there exists a triangle equivalence $\K(\RGPrj)$ and $\K(\RGInj)$, which restricts to an equivalence $\KPR \simeq \KIR$.

\begin{remark}\label{characterization}
In \cite{R} and \cite{ER} it is shown that over an $n$-Gorenstein ring $R$, a complex $G$ of (left) $R$-modules is Gorenstein injective if and only if $G_i$ is Gorenstein injective $R$-module, for any $i \in \Z$. This result has been generalized recently \cite{LZ} to any left noetherian ring. The dual result for Gorenstein projective complexes is proved in \cite[\S 5]{EEI}. More precisely, they showed that if $R$ is a commutative noetherian ring of finite Krull dimension, then a complex $G$ of $R$-modules is Gorenstein projective if and only if $G_i$ is Gorenstein projective, for any $i \in \Z$. These results have been generalized to arbitrary rings in \cite{YL}. We use these characterizations throughout this section.
\end{remark}

\subsection{Existence of adjoint}
Let us begin this subsection, by recalling the following result from \cite{BEIJR}.

\begin{sproposition} \label{LOR}
Let $\CX$ be a class of objects of $\C(R)$ that is closed under extension and suspension. Let $M \in \C(R)$ and let $0 \rt Y \rt X \rt M \rt 0 $ be an exact sequence with $X \in \CX$ and $Y \in \CX^\perp$. Then, for any $X' \in \CX$, the induced homomorphism $\Hom_{\KR}(X',X) \rt \Hom_{\KR}(X',M)$ is a bijection.
\end{sproposition}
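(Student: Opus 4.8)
The plan is to show separately that the map $p_*:\Hom_{\KR}(X',X)\rt\Hom_{\KR}(X',M)$ induced by the given epimorphism $p:X\rt M$ is surjective and injective. The crux of both parts is the vanishing $\Hom_{\KR}(Z,Y)=0$ for every $Z\in\CX$, which I would establish first. Fix a complex $Z$ and let $\cone(\id_Z)$ be the cone of its identity map; it is a contractible complex and sits in an exact sequence $0\rt Z\st{j}{\rt}\cone(\id_Z)\rt\TS Z\rt 0$ in $\C(R)$. Unwinding the mapping-cone differential shows that a chain map $Z\rt Y$ is null-homotopic precisely when it extends along $j$ to a chain map $\cone(\id_Z)\rt Y$; hence the image of $j^{*}:\Hom_{\C(R)}(\cone(\id_Z),Y)\rt\Hom_{\C(R)}(Z,Y)$ is exactly the subgroup of null-homotopic maps, so $\Hom_{\KR}(Z,Y)$ is the cokernel of $j^{*}$. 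By exactness of the long $\Ext$-sequence of the displayed extension, the connecting homomorphism induces an embedding of this cokernel into $\Ext^1_{\C(R)}(\TS Z,Y)$. Since $\CX$ is closed under suspension we have $\TS Z\in\CX$, and since $Y\in\CX^{\perp}$ the group $\Ext^1_{\C(R)}(\TS Z,Y)$ vanishes; therefore $\Hom_{\KR}(Z,Y)=0$.

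For surjectivity, given a chain map $f:X'\rt M$, I would form the pullback $E:=X\times_M X'$. Pulling the sequence $0\rt Y\rt X\st{p}{\rt}M\rt 0$ back along $f$ gives an exact sequence $0\rt Y\rt E\rt X'\rt 0$ in $\C(R)$ whose class is the image, under the map $\Ext^1_{\C(R)}(M,Y)\rt\Ext^1_{\C(R)}(X',Y)$, of the class of the original sequence. As $X'\in\CX$ and $Y\in\CX^{\perp}$ the target is zero, so the pulled-back sequence splits; composing a splitting of $E\rt X'$ with the projection $E\rt X$ yields a chain map $g:X'\rt X$ with $pg=f$. Hence $p_*[g]=[f]$, and $p_*$ is surjective.

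For injectivity, let $g:X'\rt X$ be a chain map with $pg$ null-homotopic. By the characterization above (with $Z=X'$), $pg$ extends along $j:X'\rt\cone(\id_{X'})$ to a chain map $\phi:\cone(\id_{X'})\rt M$ with $\phi j=pg$. This is where closure of $\CX$ under extension is used: both $X'$ and $\TS X'$ lie in $\CX$ (the latter by closure under suspension), so the exact sequence $0\rt X'\rt\cone(\id_{X'})\rt\TS X'\rt 0$ forces $\cone(\id_{X'})\in\CX$, and applying the surjectivity step to $\phi$ gives a chain map $\psi:\cone(\id_{X'})\rt X$ with $p\psi=\phi$. Then $p(\psi j-g)=\phi j-pg=0$, so $\psi j-g$ factors as $u\theta$ through the monomorphism $u:Y\hookrightarrow X$ for some chain map $\theta:X'\rt Y$. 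Since $\cone(\id_{X'})$ is contractible, $j$ is null-homotopic, hence so is $\psi j$; and $[\theta]=0$ in $\Hom_{\KR}(X',Y)$ by the first step. Therefore $[g]=[\psi j]-u_*[\theta]=0$, so $p_*$ is injective.

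I expect the vanishing $\Hom_{\KR}(Z,Y)=0$ to be the only step that is not purely formal: its role is to use $\cone(\id_Z)$ to translate the notion of being null-homotopic into that of extending over $\cone(\id_Z)$, and thereby to link $\Hom_{\KR}$ to $\Ext^1$ computed in the abelian category $\C(R)$; the remaining two steps are pullback-and-splitting arguments together with a diagram chase. It is worth keeping track of the hypotheses: closure of $\CX$ under suspension is used in the vanishing step, and closure of $\CX$ under both suspension and extension is used to place $\cone(\id_{X'})$ in $\CX$ in the injectivity argument; closure under extension is needed only there.
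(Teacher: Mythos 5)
Your proof is correct. The paper itself gives no argument here, deferring entirely to Proposition 3.1 and Corollary 3.2 of the cited work of Bravo--Enochs--Iacob--Jenda--Rada, and what you have written is precisely the standard argument behind those results: the identification of null-homotopic maps $Z\rt Y$ with maps factoring through $j:Z\rt\cone(\id_Z)$ (the same fact the paper later quotes as Lemma \ref{enochs}), which together with the long exact sequence of the extension $0\rt Z\rt\cone(\id_Z)\rt\TS Z\rt 0$ yields $\Hom_{\KR}(Z,Y)=0$ for $Z\in\CX$, followed by the pullback-splitting argument for surjectivity and the lift over $\cone(\id_{X'})\in\CX$ for injectivity. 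Your accounting of where each closure hypothesis enters (suspension for the vanishing step, suspension plus extension to place $\cone(\id_{X'})$ in $\CX$) is also accurate.
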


\begin{proof}
See Proposition 3.1 and Corollary 3.2 of \cite{BEIJR}.
\end{proof}

The following result should be compared with the Theorem 2.7 of \cite{G}, where it was shown that over an $n$-Gorenstein ring, the inclusion $\iota: \K(\RGPrj) \lrt \KR$ has a right adjoint. Recall that $R$ is called $n$-Gorenstein if it is two-sided noetherian and $\id_RR\leq n$ and $\id R_R\leq n$.

\begin{stheorem}\label{adjoint}
Let $R$ be a commutative noetherian ring of finite Krull dimension. The inclusion $\iota: \K(\RGPrj) \lrt \KR$ has a right adjoint.
\end{stheorem}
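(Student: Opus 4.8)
The plan is to produce a complete cotorsion theory $(\CX,\CY)$ in $\C(R)$ with $\CX$ closed under suspension and with $\K(\CX)$ closely related to $\K(\RGPrj)$, so that the adjoint follows from the Theorem of \cite{BEIJR} recalled in \ref{suspension}, or alternatively to verify the hypotheses of Bousfield localization \ref{LocSeq} directly. Concretely, by Remark \ref{characterization} a complex $G$ over such a ring is degreewise Gorenstein projective precisely when it is Gorenstein projective as a complex; the category $\K(\RGPrj)$ is the homotopy category of complexes whose terms lie in $\RGPrj$. I would take $\CX$ to be the class of complexes of Gorenstein projective modules (equivalently, of Gorenstein projective complexes), and $\CY=\CX^\perp$; this class is clearly closed under suspension. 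The first step is therefore to show $(\CX,\CX^\perp)$ is a \emph{complete} cotorsion theory in $\C(R)$. For this I would invoke that $\RGPrj$ is itself part of a complete hereditary cotorsion pair in $\RMod$ (this uses exactly the finite Krull dimension / noetherian hypothesis, via the results cited in Section~4 on the existence of Gorenstein projective precovers, or the work of Enochs--Estrada--García Rozas and Šaroch--Šťovíček), and then lift it to $\C(R)$ by the standard machinery: a cotorsion pair in $\RMod$ cogenerated by a set induces a complete cotorsion pair in $\C(R)$ whose left class consists of the complexes with Gorenstein projective entries satisfying a suitable acyclicity condition. I would need to identify that lifted left class with $\CX$ (or a class whose homotopy category agrees with $\K(\RGPrj)$).

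Granting the complete cotorsion theory, the Theorem of \cite{BEIJR} recalled in \ref{suspension} gives immediately that the inclusion $\K(\CX)\rt \K(\C(R))$ has a right adjoint. The remaining step is to descend from $\K(\C(R))$ to $\KR$ and from $\K(\CX)$ to $\K(\RGPrj)$. Here I would use the evaluation/row functors $k^i$ of \ref{EvalFunc} and their adjoints, together with Theorem \ref{main} and Proposition \ref{qwe}: the point is that $\K(\RGPrj)$ embeds into $\KR$ and, via the machinery built in Sections~3--4, questions about $\KR$ and $\K(\RGPrj)$ reduce rowwise to questions already solved. Alternatively — and this may be cleaner — I would verify the Bousfield criterion of \ref{LocSeq} directly: given $M\in\KR$, I must produce a triangle $G'\rt M\rt M'\rightsquigarrow$ with $G'\in\K(\RGPrj)$ and $M'\in\K(\RGPrj)^\perp$. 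The complex $G'$ should be manufactured as a Gorenstein projective precover of $M$ \emph{in $\C(R)$}, which exists by Theorem \ref{Precovering}; one then checks, using Proposition \ref{cxz} (the right adjoint to $\KTAPC\hookrightarrow\KPC$) and an argument in the spirit of \cite{J07}, that the cone $M'$ is right-orthogonal to $\K(\RGPrj)$. Proposition \ref{LOR} is tailor-made to convert the module/complex-level precover into the required $\Hom_{\KR}$-bijection: with $\CX=\RGPrj$ viewed inside $\C(R)$, an exact sequence $0\rt Y\rt X\rt M\rt 0$ with $X$ Gorenstein projective and $Y\in\CX^\perp$ gives $\Hom_{\KR}(X',X)\cong\Hom_{\KR}(X',M)$ for all $X'\in\RGPrj$, which is exactly the adjunction isomorphism we want after assembling these over a projective-type resolution.

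In detail, the order of steps I would carry out is: (1) record that over a commutative noetherian ring of finite Krull dimension $\RGPrj$ is the left class of a complete cotorsion pair, and that the orthogonality class $\RGPrj^\perp$ consists of modules of finite projective dimension bounded by the Krull dimension; (2) lift this to a complete cotorsion theory $(\CX,\CY)$ in $\C(R)$ with $\CX$ closed under suspensions, identifying $\CX$ appropriately; (3) apply the \cite{BEIJR} Theorem to get a right adjoint to $\K(\CX)\hookrightarrow\K(\C(R))$; (4) transfer along the row functors $k^i$ and their adjoints, using Theorem \ref{main}, Proposition \ref{qwe} and Proposition \ref{cxz}, to rewrite this as a right adjoint to $\iota:\K(\RGPrj)\hookrightarrow\KR$; and, as a cross-check, (5) verify the Bousfield triangle directly via Theorem \ref{Precovering} and Proposition \ref{LOR}.

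The main obstacle I anticipate is step (2): making precise the relation between the homotopy category $\K(\CX)$ of the lifted cotorsion class in $\C(R)$ and the honest homotopy category $\K(\RGPrj)$ of complexes with Gorenstein projective entries. These need not coincide on the nose — the lifted left class typically carries an extra acyclicity/"dg" condition — so one has to argue that the difference is absorbed by the orthogonal class $\CY$ (equivalently, that the acyclic complexes of Gorenstein projectives with the relevant property are themselves in $\CX$, or split off in the homotopy category), using that $\RGPrj^\perp$ has bounded projective dimension precisely because the Krull dimension is finite. A secondary technical point is checking that the Gorenstein projective precover in $\C(R)$ from Theorem \ref{Precovering} can be chosen so that its kernel is genuinely in $\K(\RGPrj)^\perp$ and not merely orthogonal to a subclass; this is where Proposition \ref{LOR} and the completeness of the cotorsion theory do the real work. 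Once these identifications are in place, everything else is bookkeeping with adjunctions and triangles already established in the paper.
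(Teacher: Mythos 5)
Your plan hinges on the claim that $(\GPrj\,\C(R),\,\GPrj\,\C(R)^{\perp})$ is a \emph{complete} cotorsion theory in $\C(R)$ (your steps (1)--(2)), and this is precisely the hard point: nothing in the paper, and nothing you cite that is actually available here, delivers it. Theorem \ref{Precovering} only produces \emph{precovers}, which is strictly weaker than the special precovers that completeness requires, and the worry you flag (the ``dg'' versus ``degreewise'' discrepancy in the lifted class) is not where the real difficulty lies. Your fallback route through the Bousfield criterion has the same defect: to apply Proposition \ref{LOR} you need an exact sequence $0\rt Y\rt X\rt M\rt 0$ with $X$ Gorenstein projective and $Y\in\CX^{\perp}$, and a mere $\CX$-precover of $M$ gives neither surjectivity nor, by any Wakamatsu-type argument valid for precovers, a kernel in $\CX^{\perp}$. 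A smaller but genuine error: $\RGPrj^{\perp}$ does not consist of modules of finite projective dimension (it contains, e.g., all injectives); only the reverse containment holds and is what one actually uses. Finally, the ``descent from $\K(\C(R))$ to $\KR$'' via the row functors $k^i$, Theorem \ref{main} and Proposition \ref{cxz} answers a question that does not arise: in the cited theorem of \cite{BEIJR}, $\K(\CX)$ is the full subcategory of $\KR$ on the complexes belonging to $\CX$, so no transfer step is needed.

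The paper avoids the unproved completeness claim by factoring $\iota$ as $\K(\RGPrj)\st{l}{\rt}\K(\RGFlat)\st{j}{\rt}\KR$. For $j$ it uses a cotorsion pair whose completeness \emph{is} known, namely $(\GFlat\,\C(R),\,\GFlat\,\C(R)^{\perp})$ from \cite{YL}, identifies the left class with $\C(\RGFlat)$, and applies the theorem of \cite{BEIJR}. For $l$ it uses the finite Krull dimension $d$ exactly once, to bound the Gorenstein projective dimension of every Gorenstein flat complex by $d$; a complex version of \cite[Theorem 2.10]{H} then yields $0\rt D_X\rt G_X\rt X\rt 0$ with $G_X$ Gorenstein projective and $\pd D_X\leq d-1$, so $D_X\in(\GPrj\,\C(R))^{\perp}$, and Proposition \ref{LOR} shows $X\mapsto G_X$ is the right adjoint. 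If you want to salvage your one-step approach, you would have to prove outright that special Gorenstein projective precovers exist in $\C(R)$ over these rings --- which is essentially equivalent to concatenating the two steps above, so the detour through Gorenstein flat complexes is not optional bookkeeping but the substance of the argument.
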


\begin{proof}
$(i)$ The functor $\iota:\K(\RGPrj) \rt \K(R)$ can be factored through $\K(\RGFlat)$ as in the following diagram
\[\xymatrix{ & \K(\RGFlat) \ar[dr]^{j} \\ \K(\RGPrj) \ar[ur]^l \ar[rr]^{\iota} & & \KR }\]
So it is enough to prove that each of the inclusions $l$ and $j$ have a right adjoint. Let us first show the existence of the right adjoint for $j$. By theorem 3.4 of \cite{YL} the pair $(\GFlat \ \C(R), {\GFlat \ \C(R)}^\perp)$ form a complete cotorsion theory in $\C(R)$. But by the characterization of Gorenstein flat complexes, we know that $(\GFlat \ \C(R))$ is nothing but the $\C(\RGFlat)$, see Lemmas 12 and 13 of \cite{EEI}. Now \ref{TEnochs} implies that $j$ has a right adjoint. To complete the proof, we should show that $l$ also has a right adjoint. By assumption, $R$ has finite Krull dimension, say $d$. Let $X \in \C(\RGFlat)$. We claim that the $d$-th syzygy of $X$ in $\C(R)$ is a Gorenstein projective complex. This follows from the fact that the Gorenstein projective dimension of any Gorenstein flat module is at most $d$ in view of the characterization of Gorenstein projective complexes \ref{characterization}. Therefore we conclude that the Gorenstein projective dimension of $X$ is at most $d$. A complex version of \cite[Theorem 2.10]{H} implies that there exists a short exact sequence
\[0 \rt D_X \rt G_X \rt X \rt 0\]
of complexes in which $G_X$ is a Gorenstein projective complex and the projective dimension of $D_X$ is at most $d-1$. Hence it follows that $D_X \in {(\GPrj\C(R))}^\perp$. We define $l_\rho:\K(\RGFlat) \rt \K(\RGPrj)$ by sending $X$ to $G_X$. Proposition \ref{LOR} now come to play to show that $l_\rho$ is in fact the right adjoint of $l$. Hence the proof is complete.
\end{proof}

\begin{sremark}
The proof of the above theorem provides another proof for the fact that over commutative noetherian rings of finite Krull dimension, the class of Gorenstein projective complexes is a precovering class in $\C(R)$, see Theorem \ref{Precovering}. This proof will use the characterizations of Gorenstein projective complexes recalled in \ref{characterization}. Let us provide an outline.

Let $X \in \C(R)$ be a complex of $R$-modules. It is known \cite[Theorem 2]{EEI} that the class of Gorenstein flat complexes is precovering. So $X$ admits a Gorenstein flat precover $F \st{\varphi}{\rt} X$. Since $F$ is a Gorenstein flat complex, the proof of the above theorem implies that $F$ has a Gorenstein projective precover $G \st{\psi}{\rt} F$. Now since $R$ is of finite Krull dimension, any Gorenstein projective complex is Gorenstein flat, and so we may deduce that $G \st{\varphi\psi}{\lrt} X$ is a Gorenstein projective precover of $X$.
\end{sremark}

\begin{sremark}
Another important fact is that the argument mentioned in the above remark also works for modules instead of complexes to provide another easy proof for the fact that over commutative noetherian rings of finite Krull dimension, the class of Gorenstein projective modules is precovering, see \cite[Corollary 2.13]{J07} and \cite[Theorem A.1]{MS}. The only facts that one should use is that over such rings the Gorenstein projective dimension of any Gorenstein flat module is bounded by the dimension of the ring and also over such rings any Gorenstein projective module is Gorenstein flat. These two are both known in the literature.
\end{sremark}

\subsection{Equivalence of $\KPC$ and $\KIC$}
Iyengar and Krause proved that if $R$ is a commutative noetherian ring with a dualising complex, there exists an equivalence of triangulated categories $\KPR \simeq \KIR,$ which is given by tensoring with the dualising complex \cite[Theorem 4.2]{IK}. Let us refer to this equivalence as IK-equivalence. In this subsection, we generalize this result and get a triangle-equivalence between triangulated categories $\KPC$ and $\KIC$, provided $R$ is a commutative noetherian ring admitting a dualising complex.

Let us begin by looking more carefully at the IK-equivalence. Let $R$ be a commutative noetherian ring admitting a dualising complex $D$. The quasi-inverses functors of the IK-equivalence are shown in the following diagram
\[ \xymatrix@C+3pc{ \KPR \ar@<-1ex>[r]_i & \KFR \ar@<-1ex>[r]_{D\otimes_R -} \ar@<-1ex>[l]_{q} & \KIR, \ar@<-1ex>[l]_{\Hom_R(D,-)} } \]
It is clear that the pair $(D\otimes_R - ,\Hom_R(D,-))$ is an adjoint pair. Since in this situation, $\KPR$ is compactly generated and $i$ preserves coproducts, Lemma \ref{lBrown} implies the $i$ admits a right adjoint $q$, see \cite[Remark 3.2]{IK}. So $(i,q)$ also form an adjoint pair. Therefore so is their composition
$(T,S):=((D\otimes_R -) \circ i, q\circ\Hom_R(D,-)).$ This, in particular implies that to show the equivalence between $\KPR$ and $\KIR$, it is enough to show that $T$ is an equivalence. This is what Iyengar and Krause did.

We first pass to the category of complexes. To this end, we define the functors
\[D\otimes - : \KFC \rt \KIC \ \ \ \ {\rm and} \ \ \ \ \Hom(D,-): \KIC \rt \KFC,\]
using the same notations as in the above but without prefix $R$, as follows. For any $F\in \KFC$, we define
$D\otimes F$ to be a complex in which for any $i \in \Z$, $(D\otimes F)^i=D\otimes_RF^i$. In fact, we apply the functor $D\otimes_R - $ on the rows of the complex $X$.
Similarly, for any object $I \in \KIC$, we let $\Hom(D,I)$ to be an object of $\KFC$ such that for any integer $i$, $(\Hom(D,I))^i=\Hom_R(D,I^i)$. It can be seen easily that, this will provide an adjoint pair $(D\otimes - ,\Hom(D,-))$ of functors
\[\xymatrix@C+3pc{\KFC \ar@<-1ex>[r]_{D\otimes -} & \KIC. \ar@<-1ex>[l]_{\Hom(D,-)}}\]

But here we have similar situations as in \cite[Remark 3.2]{IK}. That is $\KPC$ is compactly generated and the inclusion functor $i: \KPC \lrt \KFC$ preserves coproducts. So we may apply Lemma \ref{lBrown} to conclude that $i$ has a right adjoint $Q$. Hence we get the following diagram, in which the upper row is the right adjoint of the lower row.

\[ \xymatrix@C+3pc{ \KPC \ar@<-1ex>[r]_i & \KFC \ar@<-1ex>[r]_{D\otimes -} \ar@<-1ex>[l]_{Q} & \KIC, \ar@<-1ex>[l]_{\Hom(D,-)} } \]

Let us try to get a feeling about $Q$.

\begin{slemma}
Let $F \in \KFR$. Then for any $i \in \Z$, $Q(k^i_{\la}(F))\cong k^i_{\la}(q(F))$ in $\KPC$.
\end{slemma}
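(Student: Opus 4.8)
The plan is to prove the isomorphism by Yoneda's lemma inside $\KPC$. Both $Q(k^i_{\la}(F))$ and $k^i_{\la}(q(F))$ genuinely lie in $\KPC$ — recall from \ref{EvalFunc} that $k^i_{\la}$ carries a complex of projective $R$-modules to a projective complex in $\C(R)$, and $q(F)\in\KPR$ — so it suffices to produce a natural isomorphism between the functors $\Hom_{\KPC}(X,Q(k^i_{\la}(F)))$ and $\Hom_{\KPC}(X,k^i_{\la}(q(F)))$ as $X$ ranges over $\KPC$. Every map I will write down comes either from an adjunction or from a fully faithful inclusion of homotopy categories, so naturality is automatic and the conclusion is immediate once the two functors are identified.

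The computation runs through a short chain of adjunctions. Fix $X\in\KPC$. Using the right adjoint $Q$ of the inclusion $\KPC\hookrightarrow\KFC$, and then full faithfulness of $\KFC\hookrightarrow\K(\C(R))$, one gets
\[\Hom_{\KPC}(X,Q(k^i_{\la}(F)))\cong\Hom_{\KFC}(X,k^i_{\la}(F))=\Hom_{\K(\C(R))}(X,k^i_{\la}(F)).\]
Now comes the one genuinely useful input: by \ref{EvalFunc} we have $k^i_{\la}=k^{i-1}_{\rho}$, so $k^i_{\la}$ is itself a \emph{right} adjoint, namely of $k^{i-1}\colon\K(\C(R))\rt\KR$; hence the last group equals $\Hom_{\KR}(k^{i-1}(X),F)$. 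Running the same two identifications on the other side gives $\Hom_{\KPC}(X,k^i_{\la}(q(F)))\cong\Hom_{\KR}(k^{i-1}(X),q(F))$. Since $X$ is a complex of projective complexes, its $(i-1)$-st row $k^{i-1}(X)$ is a complex of projective $R$-modules (a projective complex has projective terms, by \ref{Pro/Injcom}), so the right adjoint $q$ of $\KPR\hookrightarrow\KFR$ converts $\Hom_{\KR}(k^{i-1}(X),q(F))$ into $\Hom_{\KFR}(k^{i-1}(X),F)=\Hom_{\KR}(k^{i-1}(X),F)$. Both sides have become $\Hom_{\KR}(k^{i-1}(X),F)$, naturally in $X$, and Yoneda finishes the argument.

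An equivalent packaging, which one might prefer, goes through the Bousfield triangles of \ref{LocSeq}: take the localization triangle on $q(F)\rt F\rt F''$ in $\KFR$, with $q(F)\in\KPR$ and $F''\in\KPR^{\perp}$; apply the triangulated functor $k^i_{\la}$ to obtain a triangle on $k^i_{\la}(q(F))\rt k^i_{\la}(F)\rt k^i_{\la}(F'')$ in $\KFC$ with $k^i_{\la}(q(F))\in\KPC$; and then check that $k^i_{\la}(F'')\in\KPC^{\perp}$, which is exactly the adjunction computation above since $\Hom_{\KFC}(X,k^i_{\la}(F''))\cong\Hom_{\KR}(k^{i-1}(X),F'')=0$ for $X\in\KPC$. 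Uniqueness of the Bousfield triangle then identifies $k^i_{\la}(q(F))$ with $Q(k^i_{\la}(F))$.

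The only thing needing real care — beyond formal adjunction bookkeeping — is that $k^i_{\la}$ and $k^{i-1}$ restrict compatibly across the four categories $\KPC,\KFC,\KPR,\KFR$, so that the statement even typechecks and the relevant adjunctions restrict. This rests on two structural facts: $k^i_{\la}$ preserves projectivity and flatness of complexes (\ref{EvalFunc}), and conversely projective (resp. flat) complexes have projective (resp. flat) terms (\ref{Pro/Injcom}), so $k^{i-1}$ sends $\KPC$ into $\KPR$ and $\KFC$ into $\KFR$. Granting these routine verifications, no step is hard; the crux is simply that $k^i_{\la}$ is at once a left adjoint (of $k^i$) and a right adjoint (of $k^{i-1}$).
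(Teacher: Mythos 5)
Your proposal is correct, and your second ``packaging'' --- the Bousfield localization triangle $q(F)\rt F\rt L$, pushed forward by $k^i_{\la}$, with $k^i_{\la}(L)\in\KPC^{\perp}$ verified via the adjunction $k^i_{\la}=k^{i-1}_{\rho}$ --- is exactly the paper's proof. The Yoneda version you lead with is just a re-expression of the same computation, resting on the same key point that $k^i_{\la}$ is simultaneously a left and a right adjoint.
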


\begin{proof}
Fix $F \in \KFR$. Since $q$ is the right adjoint of $i$, there exists a triangle
\[\xymatrix@C-0.5pc{P \ar[r] & F \ar[r] & L \ar@{~>}[r] & }\]
in $\KFR$ with $P=q(F) \in \KPR$ and $L \in \KPR^\perp$. We may apply the functor $k^i_{\la}$ on this triangle, to get the exact triangle
\[\xymatrix@C-0.5pc{k^i_{\la}(q(F)) \ar[r] & k^i_{\la}(F) \ar[r] & k^i_{\la}(L) \ar@{~>}[r] & }\]
in $\KFC$. To complete the proof, it suffices to show that $k^i_{\la}(L) \in \KPC^\perp$, see \cite[4.1]{IK}. To show this, let $X \in \KPC$ be an arbitrary object. Here for simplicity we write Hom instead of $\Hom_{\KFC}$. We have
\[\Hom(X,k^i_{\la}(L))\cong \Hom(X,k^{i-1}_{\rho}(L)).\] The adjoint pair $(k^{i-1},k^{i-1}_{\rho})$ now implies the isomorphism
\[\Hom(X,k^{i-1}_{\rho}(L))\cong \Hom(X^{i-1},L).\] But the last group is zero, because $X^{i-1} \in \KPR$ and $L \in \KPR^\perp$.
\end{proof}

Using this lemma we can provide the following generalization of the IK-equivalence.

\begin{stheorem}\label{IK-equivCompl}
Let $R$ be a commutative noetherian ring admitting a dualising complex. There exists an equivalence of triangulated categories \[\KPC \st{\sim}{\rt} \KIC.\]
\end{stheorem}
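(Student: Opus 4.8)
The plan is to realise the equivalence as the functor $T:=(D\otimes -)\circ i\colon \KPC\to\KIC$ of the diagram above, which is left adjoint to $S:=Q\circ\Hom(D,-)$; this is the complex‑level analogue of the functor called $T$ in the module case, and I write $T_R\colon\KPR\to\KIR$, $S_R\colon\KIR\to\KPR$ for that module‑level pair, so that $(T_R,S_R)$ is the Iyengar--Krause equivalence: $S_RT_R\cong\id_{\KPR}$ and $T_RS_R\cong\id_{\KIR}$. Under our hypotheses all four categories $\KPR,\KIR,\KPC,\KIC$ are compactly generated (Theorem \ref{main}, its injective dual in the Remark above, and \cite{K05}), and $T$ clearly preserves coproducts, being the composite of the coproduct‑preserving inclusion $i$ with the functor $D\otimes-$, which acts row‑wise by the coproduct‑preserving functor $D\otimes_R-$.

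First I would compute $T$ and $S$ on the compact generators supplied by Theorem \ref{main} and its dual. Since $i$, $D\otimes-$ and $\Hom(D,-)$ are all applied row‑wise to bicomplexes, one reads off directly that $T(k^i_{\la}(P))\cong k^i_{\la}(T_RP)$ for $P\in\KPR$ and $\Hom(D,k^i_{\la}(J))\cong k^i_{\la}(\Hom_R(D,J))$ for $J\in\KIR$; feeding the second into the lemma just proved (which says $Q\circ k^i_{\la}\cong k^i_{\la}\circ q$) gives $S(k^i_{\la}(J))\cong k^i_{\la}(S_RJ)$, and using $k^i_{\rho}=k^{i+1}_{\la}$ one also gets $S(k^i_{\rho}(I))\cong k^i_{\rho}(S_RI)$ for $I\in\KIR$. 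Composing these identifications with $S_RT_R\cong\id$ and $T_RS_R\cong\id$ yields natural isomorphisms $ST(k^i_{\la}(P))\cong k^i_{\la}(P)$ and $TS(k^i_{\rho}(I))\cong k^i_{\rho}(I)$, and by uniqueness of adjoints these are the unit $\eta$ and counit $\epsilon$ of $(T,S)$ evaluated at these objects. In particular $\eta$ is invertible on the set $\fS=\{k^i_{\la}(P):P\in\CS,\ i\in\Z\}$, which by Theorem \ref{main} is a compact generating set for $\KPC$.

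Next, since $\CS$ generates $\KPR$ and $T_R$ is an equivalence, $\{T_RP:P\in\CS\}$ is a compact generating set for $\KIR$; hence by the injective analogue of Theorem \ref{main} the set $\{k^i_{\rho}(T_RP):P\in\CS,\ i\in\Z\}$ is a compact generating set for $\KIC$. As $k^i_{\la}=k^{i-1}_{\rho}$, this set is exactly $\{T(k^i_{\la}(P)):P\in\CS,\ i\in\Z\}$, so $T$ carries $\fS$ to a compact generating set of $\KIC$. Now fix $G\in\fS$ and let $\CE_G$ be the full subcategory of those $X\in\KPC$ for which $\Hom_{\KPC}(G,\Sigma^n\eta_X)$ is an isomorphism for every $n\in\Z$. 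Because $G$ and $TG$ are compact and $T$ preserves coproducts, $\CE_G$ is a localizing subcategory of $\KPC$, and it contains $\fS$ since $\eta$ is invertible there. Hence $\CE_G=\KPC$; and as $\fS$ generates $\KPC$ this forces $\eta_X$ to be invertible for every $X\in\KPC$. Thus $\eta$ is a natural isomorphism and $T$ is fully faithful.

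Finally, since $T$ is fully faithful, preserves coproducts, and $\KPC$ is idempotent complete (it has coproducts), the essential image of $T$ is a localizing subcategory of $\KIC$; it contains the compact generating set $\{T(k^i_{\la}(P)):P\in\CS,\ i\in\Z\}$, so it is all of $\KIC$, and therefore $T$ is an equivalence of triangulated categories. I expect the main difficulty to be organisational rather than conceptual: the whole argument rests on the two imported results (the Iyengar--Krause equivalence and the compact generation established in Section 3), and the work lies in keeping track of how the row‑wise functors $i$, $D\otimes-$ and $\Hom(D,-)$ interact with the adjoints $k^i_{\la}$, $k^i_{\rho}$ and with suspension. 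The one genuinely nontrivial such interaction, that the otherwise inexplicit adjoint $Q$ commutes with $k^i_{\la}$, is exactly the content of the preceding lemma, and it is what makes the identifications $ST(k^i_{\la}(P))\cong k^i_{\la}(P)$ of the second paragraph available.
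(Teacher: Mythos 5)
Your proposal is correct and follows essentially the same route as the paper: both take $T=(D\otimes-)\circ i$ with right adjoint $Q\circ\Hom(D,-)$, use the lemma $Q\circ k^i_{\la}\cong k^i_{\la}\circ q$ to compute the unit on the compact generators $k^i_{\la}(P)$ via the module-level Iyengar--Krause equivalence, and conclude by compact generation. You merely spell out the standard d\'evissage (the localizing-subcategory argument upgrading ``unit invertible on generators'' to a natural isomorphism, and the essential-surjectivity step) that the paper leaves implicit with ``we may focus on their compact generating sets.''
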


\begin{proof}
For simplicity, set $U:=Q\circ\Hom(D,- )$ and $T:=(D\otimes - )\circ i$. We just show that there is a natural equivalence $\eta: \id_{\KPC} \lrt UT$. The other way around is similar. Since both categories are compactly generated, to prove the theorem, we may focus on their compact generating sets. By Theorem \ref{main}, the set
$\fS=\{ k^i_{\la}(P) : P \in S, i \in \Z\}$ is a compact generating set for $\KPC$, where $S$ is a compact generating set for $\KPR$. So it suffices only to show that $\eta_{k^i_{\la}}$, for any $i$ and any $P \in S$ is an isomorphism.
This follows from the following list of isomorphisms.
\[\begin{array}{lll}
UT(k^i_{\la}(P)) & \cong U(D\otimes k^i_{\la}(P))\\ & \cong U(k^i_{\la}(D\otimes_RP))\\ & \cong Q(k^i_{\la}(\Hom(D,D\otimes_RP)))\\ & \cong k^i_{\la}(q\circ \Hom(D,D\otimes_RP)).
\end{array}\]
It follows from \cite[Theorem 4.2]{IK} that the last term is isomorphic to $k^i_{\la}(P)$. This is what we want. Hence the proof is complete.
\end{proof}

We need the following technical lemma in the proof of our next result.

\begin{slemma}
Let $q$ and $Q$ be the above mentioned functors. Then for any object $F \in \KFC$, we have ${Q(F)}^i=q(F^i)$.
\end{slemma}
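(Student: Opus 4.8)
The plan is to reduce the statement to the compact generators, just as in the proof of Theorem~\ref{IK-equivCompl}, and then to upgrade from an abstract isomorphism to the stated equality of rows. First I would recall the defining triangle of $Q$: since $(i,Q)$ is an adjoint pair with $i:\KPC \to \KFC$, for any $F\in\KFC$ there is a triangle $Q(F)\to F\to L_F \rightsquigarrow$ in $\KFC$ with $Q(F)\in\KPC$ and $L_F\in\KPC^\perp$, and this triangle is unique up to isomorphism (see \ref{LocSeq}). Applying the exact functor $k^i$ to this triangle gives a triangle $k^i(Q(F))\to F^i\to k^i(L_F)\rightsquigarrow$ in $\KFR$, with $k^i(Q(F))=Q(F)^i\in\KPR$. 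The point will be to show $k^i(L_F)\in\KPR^\perp$; granting that, uniqueness of the Bousfield triangle defining $q$ on $F^i$ forces $Q(F)^i\cong q(F^i)$ canonically, which is the content of the lemma.

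For the orthogonality claim, let $Y\in\KPR$ be arbitrary. Using the adjunction $(k^i_\la,k^i)$ (equivalently the pair $(k^{i-1}_\rho = k^i_\la$, $k^{i-1}$) as in \ref{EvalFunc}), one has $\Hom_{\KFR}(Y,k^i(L_F))\cong\Hom_{\KFC}(k^i_\la(Y),L_F)$. Since $Y\in\KPR$, the complex $k^i_\la(Y)$ lies in $\KPC$ — this is exactly the stability observed in \ref{EvalFunc}, that $e^i_\la$ carries projective modules to projective complexes, extended to homotopy categories. Hence $\Hom_{\KFC}(k^i_\la(Y),L_F)=0$ because $L_F\in\KPC^\perp$, and therefore $k^i(L_F)\in\KPR^\perp$, as needed. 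This is essentially the computation already performed in the preceding Slemma on $Q(k^i_\la(F))$, run in the reverse direction.

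The main obstacle is the passage from $\cong$ to $=$: a priori the argument above only gives a (natural) isomorphism $Q(F)^i\cong q(F^i)$ in $\KPR$, whereas the statement asserts equality. I would address this by being careful about how $Q$ is constructed on the nose — namely, $Q(F)$ is defined row-by-row, so that by construction $Q(F)^i$ \emph{is} the value of the row-wise right adjoint, which on the $i$th row is precisely $q(F^i)$; the triangle argument above then merely confirms that this explicit construction does compute the adjoint $Q$. Concretely: define $Q(F)$ to be the bicomplex whose $i$th row is $q(F^i)$ (with the induced horizontal differentials, which exist and are well-defined up to homotopy by functoriality of $q$ together with the uniqueness of the maps in \ref{LocSeq}), check that this lies in $\KPC$, and verify the universal property against any $X\in\KPC$ using $\Hom_{\KFC}(X,Q(F))\cong\prod$ or a limit of $\Hom_{\KFR}(X^i, q(F^i))$ assembled via the rows — reducing to the compact generators $k^i_\la(P)$ makes this assembly routine. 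With $Q(F)$ so defined, the equality $Q(F)^i=q(F^i)$ holds by definition, and the triangle computation shows this $Q$ is (naturally isomorphic to, hence may be taken to be) the right adjoint of $i$.
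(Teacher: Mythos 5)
Your argument is essentially identical to the paper's proof: take the Bousfield triangle $Q(F)\to F\to L$ with $L\in\KPC^{\perp}$, apply $k^i$, and use the adjunction $(k^i_{\la},k^i)$ together with $k^i_{\la}(\KPR)\subseteq\KPC$ to see that $L^i\in\KPR^{\perp}$, whence uniqueness of the localization triangle gives $Q(F)^i\cong q(F^i)$. Your closing worry about upgrading $\cong$ to $=$ is a reasonable scruple the paper does not address (the stated equality is to be read as this canonical isomorphism), but it does not change the argument.
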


\begin{proof}
Let $F$ be an arbitrary object of $\KFC$. Since $Q$ is the right adjoint of the inclusion functor $i:\KPC \rt \KFC$, there exists a triangle
\[\xymatrix@C-0.5pc{Q(F) \ar[r] & F \ar[r] & L \ar@{~>}[r] & }\]
in $\KFC$, such that $Q(F)\in \KPC$ and $L \in {\KPC}^\perp$. Let $i \in \Z$ be an integer. By applying the $i$th evaluation functor on the above triangle, we get the exact triangle
\[\xymatrix@C-0.5pc{{Q(F)}^i \ar[r] & F^i \ar[r] & L^i \ar@{~>}[r] & }\]
in $\KFR$. So to complete the proof it suffices to show that $L^i \in {\KPR}^\perp$, see e.g. \cite[4.1]{IK}. To this end, let $P \in \KPR$ and consider the group $\Hom_{\KFR}(P,L^i)$. The adjoint pair $(k^i_{\la},k^i)$ implies the following isomorphism of abelian groups
\[\Hom_{\KFR}(P,L^i) \cong \Hom_{\KPC}(k^i_{\la}(P),L).\] But the latter group is zero because $L \in {\KPC}^\perp$.
The proof is hence complete.
\end{proof}

\begin{sproposition}\label{equivTotAcyc}
Let $R$ be a commutative noetherian ring admitting a dualising complex $D$. The functor $D\otimes - $ induces a triangle-equivalence between categories $\KTAPC$ and $\KTAIC$ such that the following diagram is commutative.
\[\xymatrix@R-1pc{\KPC \ar[r]^{D\otimes - } & \KIC \\ \KTAPC \ar[u] \ar[r]^{D\otimes - } &  \KTAIC \ar[u] & }\]
\end{sproposition}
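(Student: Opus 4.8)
Here the top arrow $D\otimes-$ of the square denotes the composite $T:=(D\otimes-)\circ i:\KPC\lrt\KIC$ occurring in Theorem \ref{IK-equivCompl}; by that theorem $T$ is a triangle-equivalence, with triangle quasi-inverse $U:=Q\circ\Hom(D,-)$. The plan is to check only that $T$ sends $\KTAPC$ into $\KTAIC$ and that $U$ sends $\KTAIC$ into $\KTAPC$. Granting these two inclusions, $T$ and $U$ restrict to triangulated functors between the thick subcategories $\KTAPC$ and $\KTAIC$, these restrictions remain mutually quasi-inverse (the natural isomorphisms $UT\cong\id_{\KPC}$ and $TU\cong\id_{\KIC}$ restrict), so the restriction of $T$ is the asserted triangle-equivalence; and the square commutes on the nose, since its lower horizontal arrow is by construction this restriction and its vertical arrows are the inclusions.

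To obtain the two inclusions I would first record how $T$ and $U$ act row by row. Since $D\otimes-$ on $\KFC$ is defined by applying $D\otimes_R-$ to each row and $i:\KPC\hookrightarrow\KFC$ does not change the rows, one has $(TP)^j=D\otimes_RP^j=T_R(P^j)$ for every $P\in\KPC$ and every $j\in\Z$, where $T_R:\KPR\lrt\KIR$ denotes the module-level Iyengar--Krause equivalence. Dually $\Hom(D,-)$ is defined row by row, so together with the above lemma ${Q(F)}^j=q(F^j)$ this yields $(UI)^j=q(\Hom_R(D,I^j))=S_R(I^j)$ for every $I\in\KIC$ and every $j\in\Z$, where $S_R:=q\circ\Hom_R(D,-)$ is the quasi-inverse of $T_R$. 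I would then invoke the module-level fact that the Iyengar--Krause equivalence restricts to a triangle-equivalence $T_R:\KTAPR\lrt\KTAIR$, necessarily with quasi-inverse $S_R$; this is provided by \cite{IK} and holds because $R$ admits a dualising complex. Combining it with Proposition \ref{qwe} completes the argument: if $P\in\KTAPC$, then every row $P^j$ lies in $\KTAPR$ by \ref{qwe}(i), hence every $(TP)^j=T_R(P^j)$ lies in $\KTAIR$, hence $TP\in\KTAIC$ by \ref{qwe}(ii); and symmetrically, if $I\in\KTAIC$, then each $I^j\in\KTAIR$, hence each $(UI)^j=S_R(I^j)\in\KTAPR$, hence $UI\in\KTAPC$.

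The argument is entirely formal once Theorem \ref{IK-equivCompl} is in hand; the single non-formal ingredient is the module-level statement that $T_R$ carries totally acyclic complexes of projectives to totally acyclic complexes of injectives and back, and that is the point that needs care --- though it is precisely what the Iyengar--Krause results supply. If one wanted a self-contained route one could instead transport the description of $\KTAPC$ as the kernel of the homological functor built in the proof of Proposition \ref{cxz} across $T$, and identify the resulting homological functor on $\KIC$ as one whose kernel is $\KTAIC$; this is correct but considerably more laborious, and I would not pursue it.
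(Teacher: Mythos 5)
Your proposal is correct and follows essentially the same route as the paper: both directions are checked row by row, using the row-wise definitions of $D\otimes-$ and $\Hom(D,-)$, the technical lemma $(Q(F))^i=q(F^i)$, the module-level fact from \cite{IK} that the Iyengar--Krause equivalence preserves total acyclicity, and Proposition \ref{qwe} to pass between a bicomplex and its rows. Your explicit remarks on why the restrictions remain quasi-inverse and why the square commutes are left implicit in the paper but add nothing essentially new.
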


\begin{proof}
Let $P \in \KTAPC$. By definition $D\otimes P$ is an object of $\KIC$ such that for any $i \in \Z$,
$(D\otimes P)^i=D\otimes P^i$. It follows from \cite[Proposition 5.9(i)]{IK} that $D\otimes P^i$ in fact belongs to $\KTAIR$. Now part $(ii)$ of Proposition \ref{qwe}, implies that $D\otimes P \in \KTAIC$. For the converse, assume that $I$ is an arbitrary object of $\KTAIC$. To complete the proof, it suffices to show that $Q(\Hom(D,I))\in \KTAPC$. It is clear that $\Hom(D,I)$ is a complex in $\KFC$. By the above lemma, $(Q(\Hom(D,I)))^i=q((\Hom(D,I))^i)$. But by definition, $(\Hom(D,I))^i=\Hom(D,I^i)$. So $(Q(\Hom(D,I)))^i=q((\Hom(D,I^i)))$. Since $I^i$ is a complex in $\KFR$, the IK-equivalence implies that $q((\Hom(D,I^i))) \in \KTAPR$. Now the result follows from part $(i)$ of Proposition \ref{qwe}.
\end{proof}

\begin{sremark}
Consider the above diagram once more.
\[\xymatrix@R-1pc{\KPC \ar[r]^{D\otimes - } & \KIC \\ \KTAPC \ar[u]^{\ell} \ar[r]^{D\otimes - } &  \KTAIC \ar[u]^{\iota} & }\]
Since $\ell$ has a right adjoint and the rows are equivalences, it follows that $\iota$ also has a right adjoint. This, in particular, implies that the class of Gorenstein injective complexes, over a commutative noetherian ring with a dualising complex, is precovering. One also may use the left adjoint of $\iota$ to deduce that $\ell$ admits a left adjoint and then deduce that the class of Gorenstein projective modules over these rings is preenveloping.
\end{sremark}

\begin{sremark}\label{KCPC}
We let $\KCPC$ denote the full subcategory of $\KTAPC$ consisting of objects $P$ such that all its rows are projective complexes, in notation, $P^i$ is a projective complex, for any $i \in \Z$. So all rows and all columns of $P$ are projective complexes. The subscript `c' comes from the word `contractible'. Similarly, we denote the full subcategory of $\KTAIC$ consisting of all objects $I$ such that for any integer $i$, $I^i$ is an injective complex, by $\KCIC$.
It is easy to see that the equivalence $\KTAPC \st{\sim}{\rt} \KTAIC$ restricts to an equivalence $\KCPC \st{\sim}{\rt} \KCIC$, that is, the following diagram is commutative.
\[\xymatrix{\KTAPC \ar[r]^{D\otimes - } & \KTAIC \\ \KCPC \ar[u]^{\nu} \ar[r]^{D\otimes - } & \KCIC \ar[u]^{\mu} }\]
where $\nu$ and $\mu$ are inclusions.
\end{sremark}

\begin{sremark}\label{stable}
Let $R$ be a ring. One may check easily that both categories $\GPrj \ \C(R)$ and $\GInj \ \C(R)$ are Frobenius categories with respect to the natural structures inducing from the short exact sequences in $\C(R)$. Thus their stable categories carries triangulated structures. Let us denote them by $\GPrj \ \underline{\C(R)}$ and $\GInj \ \overline{\C(R)}$. Proposition 7.2 of \cite{K05} and its dual, imply that there are triangulated equivalences as follows:
\[\KTAPC \st{\sim}{\rt} \GPrj \ \underline{\C(R)} \ \ \ {\rm and} \ \ \  \KTAIC \st{\sim}{\rt} \GInj \ \overline{\C(R)}.\]
\end{sremark}

\begin{sproposition}\label{ComGen}
Let $R$ be a commutative noetherian ring admitting a dualising complex. Then the triangulated categories $\KTAPC$ and $\KTAIC$ are compactly generated.
\end{sproposition}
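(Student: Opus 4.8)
The plan is to reduce the claim, via the triangle-equivalence $\KTAPC\simeq\KTAIC$ of Proposition \ref{equivTotAcyc}, to the single assertion that $\KTAPC$ is compactly generated, and then to prove this by imitating almost word for word the proof of Theorem \ref{main}: the roles played there by $\KPR$ and $\KPC$ are now played by $\KTAPR$ and $\KTAPC$, while Proposition \ref{qwe}(i) and Lemma \ref{uio} do the bookkeeping.

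Concretely, fix a compact generating set $\CS$ for $\KTAPR$ (that one exists is the input discussed at the end). I claim $\fS=\{k^i_{\la}(P):P\in\CS,\ i\in\Z\}$ is a compact generating set for $\KTAPC$. First, $\fS\subseteq\KTAPC$: the rows of $k^i_{\la}(P)$ are two copies of $P$ and otherwise zero, all lying in $\KTAPR$, so Proposition \ref{qwe}(i) applies. Second, each $k^i_{\la}(P)$ is compact in $\KTAPC$, by the computation in the proof of Lemma \ref{kjh}: the evaluation $k^i$ restricts to $\KTAPC\to\KTAPR$ (Proposition \ref{qwe}(i) again), preserves coproducts, is right adjoint to $k^i_{\la}$ on these subcategories, coproducts in $\KTAPC$ and $\KTAPR$ agree with those in $\KPC$ and $\KPR$, and $P$ is compact in $\KTAPR$ --- so the same chain of isomorphisms goes through.

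The substantive point is that $\fS$ generates. Let $X\in\KTAPC$ with $\Hom_{\KTAPC}(k^i_{\la}(P),X)=0$ for all $i$ and $P\in\CS$; adjunction gives $\Hom_{\KTAPR}(P,X^i)=0$ for all $i$ and $P$, so each row $X^i$ vanishes in $\KTAPR$, i.e.\ is contractible, and it remains to conclude $X\simeq 0$ in $\KTAPC$. For this I invoke the triangle $X^{\geq 0}\to X\to X/X^{\geq 0}$ of Notation \ref{const}; its outer terms, being subquotients of $X$, lie in $\KPC$ and have all their rows among the rows of $X$, hence lie in $\KTAPC$ by Proposition \ref{qwe}(i). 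Since $X^{\geq 0}\in\KPC^{\geq 0}$ and $X/X^{\geq 0}\in\KPC^{\leq 0}$, and both have all rows contractible, Lemma \ref{uio} forces both to be contractible, so $X\simeq 0$. Hence $\fS$ is a set of compact objects generating $\KTAPC$, so $\KTAPC$ is compactly generated, and by Proposition \ref{equivTotAcyc} so is $\KTAIC$.

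The one step needing care is the input used above, namely that $\KTAPR$ is compactly generated over a commutative noetherian ring with a dualising complex $D$. Here one uses that the IK-equivalence $\KPR\simeq\KIR$ restricts to a triangle-equivalence $\KTAPR\simeq\KTAIR$ (cf.\ \cite[Proposition 5.9]{IK}) together with the compact generation of $\KTAIR$; the latter has to be argued separately, or quoted from work devoted to totally acyclic complexes (e.g.\ \cite{MS}), because over a ring that is not Gorenstein the totally acyclic complexes of injectives form a proper subcategory of the acyclic ones, so the statement does not follow from Krause's compact generation of $\KAIR$. Everything after this input is a routine transcription of the proof of Theorem \ref{main}; the only mild subtlety there is the stability of the truncations $X\mapsto X^{\geq n}$ under passage to $\KTAPC$.
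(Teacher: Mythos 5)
Your argument is correct in outline, but it takes a genuinely different route from the paper's. The paper works entirely on the injective side and one categorical level up: since $R$ is noetherian, $\C(R)$ is a locally noetherian Grothendieck category with $\D(\C(R))$ compactly generated, so Lemma \ref{krause} gives a \emph{left} adjoint $\iota_{\la}$ to the inclusion $\KTAIC\hookrightarrow\KIC$; as $\KIC$ is compactly generated and $\iota$ preserves coproducts, the set $\{\iota_{\la}(I): I\in S\}$ (for $S$ a compact generating set of $\KIC$) is transported directly into a compact generating set of $\KTAIC$, and the equivalence of Proposition \ref{equivTotAcyc} then handles $\KTAPC$. You instead reduce to the module level and lift a compact generating set of $\KTAPR$ through $k^i_{\la}$, rerunning the machinery of Theorem \ref{main} inside the totally acyclic subcategories. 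Your route buys a more explicit generating set (objects of the form $k^i_{\la}(P)$ with $P$ a compact generator of $\KTAPR$), and the verifications you sketch do go through: $\KTAPR$ is closed under coproducts, the adjunction and compactness computations restrict, and the truncation argument works --- though your phrase ``all their rows among the rows of $X$'' is not literally true for the boundary row of $X^{\geq 0}$ (it is isomorphic to $X^1$ rather than $X^0$, via $X^{\geq 0}/X^{\geq 1}\cong k^1_{\la}(X^1)$); all rows are still contractible, so Lemma \ref{uio} applies, but this needs the same bookkeeping the paper does in Lemma \ref{uio}.

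The one real soft spot is the input you defer: that $\KTAPR$ (equivalently, via the IK-equivalence, $\KTAIR$) is compactly generated. You correctly observe this does not follow from compact generation of the acyclic complexes, but you leave it as a pointer. It should be nailed down, and it can be, by exactly the mechanism the paper uses one level up: Lemma \ref{krause} applied to $\CA=\RMod$ gives a left adjoint to $\KTAIR\hookrightarrow\KIR$, and transporting Krause's compact generators of $\KIR$ along it shows $\KTAIR$ is compactly generated; Proposition 5.9 of \cite{IK} then carries this to $\KTAPR$. Once that is in place your proof is complete, but note that it then uses the paper's key lemma anyway, just at the level of modules instead of complexes --- so the paper's direct argument is the shorter of the two.
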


\begin{proof}
By Proposition \ref{equivTotAcyc}, these two categories are equivalence. So it suffices for us to prove that $\KTAIC$ is compactly generated. This we do. We may apply Lemma \ref{krause}, to deduce that the inclusion $\iota: \KTAIC \rt \KIC$ has a left adjoint $\iota_{\la}$. Note that by our assumption, $\KIC$ is compactly generated. Let $S$ be a compact generating set for $\KIC$. We claim that the set $\{\iota_{\la}(I) : I \in S \}$ is a compact generating set for $\KTAIC$. Let $\{X_{\al}\}_{\al \in J}$ is a family of objects indexed by the set $J$. The adjoint pair $(\iota_{\la},\iota)$ implies the following isomorphism
\[\Hom_{\KTAIC}(\iota_{\la}(I),\coprod_{\al \in J}X_{\al}) \cong \Hom_{\KIC}(I,\coprod_{\al \in J}X_{\al}).\]
But $I \in S$ means that the second Hom is isomorphic to the $\coprod_{\al \in J}\Hom_{\KIC}(I,X_{\al}).$ Another use of the adjoint duality implies the isomorphism
\[\coprod_{\al \in J}\Hom_{\KIC}(I,X_{\al})\cong \coprod_{\al \in J}\Hom_{\KTAIC}(\iota_{\la}(I),X_{\al}).\]
This implies that $\iota_{\la}(I)$, for any $I \in S$, is a compact object. To show that they also form a generating set,
let $X \in \KTAIC$ be a non-zero object. So there is an element $I \in S$ such that $\Hom_{\KIC}(I,X)\neq 0$. The adjoint duality implies that $\Hom_{\KTAIC}(\iota_{\la}(I),X)\neq 0$. This completes the proof.
\end{proof}

The above proposition in view of Remark \ref{stable} implies the following result.

\begin{scorollary}\label{stableEquiv}
Let $R$ be a commutative noetherian ring admitting a dualising complex. Then the triangulated categories $\GPrj \ \underline{\C(R)}$ and $\GInj \ \overline{\C(R)}$ are compactly generated.
\end{scorollary}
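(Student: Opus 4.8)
The plan is to obtain this immediately from Proposition \ref{ComGen} by transporting compact generation along the triangle equivalences recorded in Remark \ref{stable}. First I would recall the standard fact that compact generation is an invariant of the triangulated structure: if $F\colon \CT \rt \CT'$ is an equivalence of triangulated categories and $\CT$ is compactly generated, then so is $\CT'$, because the quasi-inverse of $F$ transports arbitrary small coproducts, $F$ carries compact objects to compact objects, and $F$ carries a compact generating set of $\CT$ to one of $\CT'$.

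With this in hand the argument is very short. By Proposition \ref{ComGen} the triangulated categories $\KTAPC$ and $\KTAIC$ are compactly generated; in particular each of them is a triangulated category with small coproducts. By Remark \ref{stable} there are triangle equivalences
\[\KTAPC \st{\sim}{\rt} \GPrj \ \underline{\C(R)} \qquad {\rm and} \qquad \KTAIC \st{\sim}{\rt} \GInj \ \overline{\C(R)},\]
so applying the transport principle above to each of these equivalences yields that $\GPrj \ \underline{\C(R)}$ and $\GInj \ \overline{\C(R)}$ are compactly generated, which is exactly the assertion.

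I do not expect a serious obstacle here, since essentially all of the substantive work has already been carried out in Proposition \ref{ComGen} (whose proof, via Lemma \ref{krause}, reduces matters to the compact generation of $\KIC$). The only point worth a brief remark is the compatibility of small coproducts with the Frobenius--stable-category construction: one should note that $\GPrj \ \C(R)$ and $\GInj \ \C(R)$ are closed under small coproducts inside $\C(R)$---which follows from the characterizations recalled in Remark \ref{characterization} together with the fact that a coproduct of totally acyclic complexes of projectives (resp.\ of injectives) is again of that type---so that the stable categories $\GPrj \ \underline{\C(R)}$ and $\GInj \ \overline{\C(R)}$ genuinely are triangulated categories with coproducts and the equivalences of Remark \ref{stable} are equivalences of such. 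Granting this bookkeeping, the corollary is a formal consequence of Proposition \ref{ComGen} and Remark \ref{stable}.
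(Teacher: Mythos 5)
Your proof is correct and is exactly the paper's argument: the corollary is obtained by transporting compact generation along the equivalences of Remark \ref{stable} from Proposition \ref{ComGen}. Your extra remark on coproducts in the stable categories is reasonable bookkeeping that the paper leaves implicit.
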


\subsection{Equivalence of $\K(\RGPrj)$ and $\K(\RGInj)$}
Recently, Chen \cite[Theorem B]{C} provide an equivalence between the triangulated categories $\K(\RGPrj)$ and $\K(\RGInj)$, the homotopy category of Gorenstein projective and Gorenstein injective $R$-modules, assuming that $R$ is left-Gorenstein. He showed that in case $R$ is a commutative Gorenstein ring, up to a natural isomorphism, this equivalence extends IK's one. We recall that a ring $R$ is called left-Gorenstein \cite{B} if any (left) $R$-module is of finite projective dimension if and only if it is of finite injective dimension.
In this subsection we plan to show that there exists a triangle-equivalence between triangulated categories $\K(\RGPrj)$ and $\K(\RGInj)$ that restricts to an equivalence between $\KPR$ and $\KIR$. Actually we do not know if this is an extension of the IK-equivalence. Let us begin by the following lemma.

\begin{slemma}\label{equNotTri}
There exist functors of categories
\[\KTAPC {\lrt} \K(\RGPrj) \ \ \ \ {\rm and} \ \ \ \ \KTAIC {\lrt} \K(\RGInj).\]
\end{slemma}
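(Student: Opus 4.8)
The plan is to obtain both functors as the ``zeroth syzygy'' construction applied to bicomplexes; the only subtlety is its invariance under chain homotopy.

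I would spell out the projective case. An object $P$ of $\KTAPC$ is, by definition, a complex
\[\cdots\rt P_1\st{h_1}{\rt}P_0\st{h_0}{\rt}P_{-1}\rt\cdots\]
of projective complexes which is acyclic --- by Proposition \ref{qwe} each row $P^j$ lies in $\KTAPR$, and such a complex is in particular acyclic --- and which remains acyclic after applying $\Hom_{\C(R)}(-,Q)$ for every projective complex $Q$, the other exactness required in \ref{TotAcy} being automatic since $Q$ is a projective object of $\C(R)$. Put $Z(P):=\Ker(h_0:P_0\rt P_{-1})$, a subcomplex of the complex $P_0$ with its (vertical) differential. Comparing with \ref{GorPro/Injcom}, $Z(P)$ is exactly a Gorenstein projective complex, and by the characterisation recalled in \ref{characterization} --- applicable since a ring possessing a dualising complex is commutative noetherian of finite Krull dimension --- every term of $Z(P)$ is a Gorenstein projective $R$-module; equivalently, $Z(P)^j=\Ker(h_0^j)$ is the zeroth cycle of the totally acyclic complex of projectives $P^j$, hence Gorenstein projective. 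Thus $Z(P)$ is an object of $\C(\RGPrj)$, and composing with the canonical functor $\C(\RGPrj)\rt\K(\RGPrj)$ gives an object of $\K(\RGPrj)$. A chain map $f=(f_i):P\rt P'$ in $\C(\C(R))$ restricts to $Z(f):=f_0|_{Z(P)}$, a chain map $Z(P)\rt Z(P')$ (it lands in $Z(P')$ because $h'_0f_0=f_{-1}h_0$), and $Z(-)$ clearly preserves identities and composition.

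The step that needs attention is homotopy-invariance. Suppose $f:P\rt P'$ is null-homotopic in $\C(\C(R))$, say $f_i=h'_{i+1}s_i+s_{i-1}h_i$ with each $s_i:P_i\rt P'_{i+1}$ a chain map. Restricting to $Z(P)=\Ker h_0$ annihilates $s_{-1}h_0$, so $Z(f)=(h'_1s_0)|_{Z(P)}$; since $h'_0h'_1=0$, the composite $h'_1s_0$ takes values in $\Ker h'_0=Z(P')$, and therefore $Z(f)$ factors through the projective complex $P'_1$. By \ref{Pro/Injcom} every projective complex is a coproduct of complexes of the form $0\rt Q\st{\id}{\rt}Q\rt0$ and is thus contractible, i.e.\ isomorphic to $0$ in $\K(\RGPrj)$; hence $Z(f)=0$ in $\K(\RGPrj)$. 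This shows $Z(-)$ descends to a functor $\KTAPC\rt\K(\RGPrj)$.

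The injective case is entirely dual: for $I\in\KTAIC$ one sets $Z(I):=\Coker(I_1\rt I_0)$, which by the dual of \ref{GorPro/Injcom} is a Gorenstein injective complex and hence, by \ref{characterization}, a complex of Gorenstein injective modules, giving an object of $\K(\RGInj)$; the parallel computation shows that a null-homotopic $f$ induces $Z(f)$ factoring through the injective complex $I_{-1}$, which is again contractible (a coproduct of complexes $0\rt E\st{\id}{\rt}E\rt0$) and so zero in $\K(\RGInj)$. I do not foresee a genuine obstacle; the one point to be careful about is precisely that a null-homotopy of $f$ operates ``horizontally'' and only exhibits $Z(f)$ as a map factoring through a contractible complex rather than supplying an honest null-homotopy of $Z(f)$ --- which, contractible complexes being zero objects of the homotopy category, is all that is needed.
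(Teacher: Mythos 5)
Your proof is correct and follows the paper's strategy in its essentials: both take $Z_0(X)=\Ker(X_0\rt X_{-1})$ on objects and both reduce homotopy-invariance to the fact that a chain map factoring through a contractible (projective, resp.\ injective) complex is zero in the homotopy category. Where you differ is in how that factorisation is produced. The paper first invokes the stable-category equivalence $\KTAPC\simeq\GPrj \ \underline{\C(R)}$ of Remark \ref{stable} to convert ``$\varphi=0$ in $\KTAPC$'' into ``$Z_0(\varphi)$ factors through a projective complex $P$'', and then writes out the null-homotopy explicitly from the decomposition $P\cong\coprod_i e^i_{\la}(P_i)$; you instead restrict a bicomplex homotopy $f_i=h'_{i+1}s_i+s_{i-1}h_i$ directly to $\Ker h_0$, where the summand $s_{-1}h_0$ dies and what remains, $h'_1s_0$, visibly factors through the single projective column $P'_1$ and lands in $\Ker h'_0$. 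Your route is slightly more economical and self-contained, needing nothing beyond the definitions and the contractibility of projective (injective) complexes, while the paper's route makes explicit the link with the stable category of Gorenstein projective complexes that it reuses elsewhere. Your observation that $Z_0(P)^j=Z_0(P^j)$ is Gorenstein projective because each row $P^j$ is totally acyclic (Proposition \ref{qwe}) is also a clean way to see that $Z_0(P)$ is a complex of Gorenstein projective modules without leaning on the characterisation recalled in Remark \ref{characterization}.
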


\begin{proof}
Let $X \in \KTAPC$. Define a functor $\xi: \KTAPC \rt \K(\RGPrj)$ by $\xi(X)=Z_0(X)$, where $Z_0(X)$ denotes the kernel $\Ker(X_0 \rt X_{-1})$. To show that this is really a functor, it suffices to show that if a map $\varphi: X \lrt Y$ is zero in $\KTAPC$, then the induced map $\xi(\varphi): Z_0(X) \lrt Z_0(Y)$ is zero in $\K(\RGPrj)$. Since by Remark \ref{stable},
$\KTAPC \simeq \GPrj \ \underline{\C(R)}$, the vanishing of $\varphi$ in $\KTAPC$ is equivalent to say that $\xi(\varphi)$ factors through a projective complex $P$. By the construction of  projective complexes, we know that $P$ can be written as a direct sum $\coprod_{i \in \Z}e^i_{\la}(P_i)$,
\[\xymatrix{\cdots \ar[r] &  P_2\oplus P_1 \ar[r]^{\delta_1} & P_1\oplus P_0 \ar[r]^{\delta_0} & P_0\oplus P_{-1} \ar[r]^{\delta_{-1}} & \cdots,}\]
where for any integer $i$ in $\Z$, $P_i$ is a projective $R$-module and $\delta_i$ is defined as $\delta_i|_{P_{i+1}}=0$ and $\delta_i\mid_{P_i}=\id_{P_i}$.
Based on this, for any $i$, we introduce a morphism $s_i:{Z_0(X)}_i \lrt {Z_0(Y)}_{i+1}$ that construct the desired homotopy. Let us for simplicity denote the complex $Z_0(X)$ by $U$ and $Z_0(Y)$ by $V$. Consider the following diagram
\[\xymatrix{\cdots \ar[r] & U_{i+2} \ar[r]^{\pa_{i+2}} \ar@/_0.5pc/@{.>}[dd] & U_{i+1} \ar[r]^{\pa_{i+1}} \ar[dl]_{\beta_{i+1}} \ar@/_0.5pc/@{.>}[dd]^>>>{\xi(\varphi)_{i+1}} & U_{i} \ar[dl]_{\beta_{i}} \ar[r] \ar@/_0.5pc/@{.>}[dd] & \cdots  \\ \cdots \ar[r] & P_{i+1}\oplus P_i \ar[dr]^{\gamma_{i+1}} \ar[r]^{\delta_i} & P_i\oplus P_{i-1} \ar[dr]^{\gamma_{i}} \ar[r]^{\delta_{i-1}} & P_{i-1}\oplus P_{i-2} \ar[r]  & \cdots \\ \cdots \ar[r] & V_{i+2} \ar[r]^{\nu_{i+2}} & V_{i+1} \ar[r]^{\nu_{i+1}} & V_{i} \ar[r] & \cdots }\]
where for any $i$, $\xi(\varphi)_i=\gamma_i\beta_i$. Define $s_i:U_i \lrt V_{i+1}$ by $s_i=\gamma_{i+1}{(\delta_i\mid_{P_i})}^{-1}\beta_i$. It is an easy diagram chasing to show that $s$ is the desired homotopy.
\end{proof}

\begin{sdefinition}
Let $\varphi: X \rt Y$ be a morphism of bicomplexes. We say that $\varphi$ is vertically-null-homotopic (v-null-homotopic for short) if for any integers $i, j \in \Z$, there exists homomorphism $s_i^j:X_i^j \rt Y_i^{j+1}$ such that (with the notation as in \ref{notation})
\begin{itemize}
\item [$(1)$] $\varphi_{i}^{j}=s_{i}^{j-1}(v_x)_{i}^{j}+(v_y)_{i}^{j+1}s_{i}^{j},$ and
\item [$(2)$] $(h_y)_{i}^{j+1}s_{i}^{j}=s_{i-1}^{j}(h_x)_{i}^{j}.$
\end{itemize}
\end{sdefinition}

Note that the condition $(1)$ means that the morphism $\varphi$ restricted to any column is null-homotopic, while the second condition means that $s^j:X^j \rt Y^{j+1}$ is a morphism of complexes. Saying roughly, this is a vertical version of the definition of a null-homotopic map in the category of bicomplexes.

\begin{slemma}\label{lifting}
Let $f:G \rt G'$ be a null-homotopic morphism in $\K(\RGPrj)$. Let $X=T_G$ and $Y=T_{G'}$ denote respectively totally acyclic complexes in $\KTAPC$ with $Z_0X=G$ and $Z_0Y=G'$. Then $f$ can be lifted to a v-null-homotopic map $\varphi:X \rt Y$. Similar statement holds true whenever $f$ is a morphism in $\K(\RGInj)$.
\end{slemma}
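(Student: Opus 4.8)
The plan is to build $\varphi$ and its vertical homotopy simultaneously, exploiting the fact that in a commuting bicomplex the horizontal and vertical data essentially decouple, so it suffices to lift the given null-homotopy of $f$ one row at a time. Fix a chain homotopy $s=(s_j)_{j\in\Z}$, $s_j\colon G_j\rt G'_{j+1}$, witnessing that $f$ is null-homotopic in $\K(\RGPrj)$; thus $f_j=\pa^{G'}_{j+1}s_j+s_{j-1}\pa^{G}_j$, where $\pa^{G},\pa^{G'}$ are the differentials of $G=Z_0(X)$ and $G'=Z_0(Y)$, which are the restrictions to the cycles of the vertical differentials of $X$ and $Y$.

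First I would lift each $s_j$ along complete resolutions. By Proposition \ref{qwe} the $j$-th row $X^j$ and the $(j+1)$-st row $Y^{j+1}$ lie in $\KTAPR$, and by construction $Z_0(X^j)=G_j$ and $Z_0(Y^{j+1})=G'_{j+1}$; that is, $X^j$ and $Y^{j+1}$ are complete projective resolutions of these Gorenstein projective modules. The standard comparison theorem for complete resolutions then yields, for every $j$, a chain map $\sigma^j=(\sigma^j_i)_{i\in\Z}\colon X^j\rt Y^{j+1}$ of row complexes with $Z_0(\sigma^j)=s_j$; concretely this says $(h_y)^{j+1}_i\sigma^j_i=\sigma^j_{i-1}(h_x)^j_i$ for all $i$, which is exactly condition $(2)$ in the definition of a v-null-homotopy. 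Crucially, the $\sigma^j$ are chosen independently for different $j$: no vertical compatibility among them is imposed.

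Next I would set $\varphi^j_i:=\sigma^{j-1}_i(v_x)^j_i+(v_y)^{j+1}_i\sigma^j_i$ — i.e.\ declare $\sigma=(\sigma^j_i)$ to be the prospective v-homotopy and let formula $(1)$ define $\varphi$ — and verify that $\varphi=(\varphi^j_i)$ is a morphism of bicomplexes. Commutation with the vertical differentials is immediate, using only that consecutive vertical differentials of $X$ and of $Y$ compose to zero. Commutation with the horizontal differentials, $(h_y)^j_i\varphi^j_i=\varphi^j_{i-1}(h_x)^j_i$, is a short diagram chase: after rewriting $(h_y)^j_i(v_y)^{j+1}_i=(v_y)^{j+1}_{i-1}(h_y)^{j+1}_i$ and $(h_x)^{j-1}_i(v_x)^j_i=(v_x)^j_{i-1}(h_x)^j_i$ by the bicomplex identities of \ref{notation} and invoking that $\sigma^{j-1}$ and $\sigma^j$ are chain maps of rows, the two sides agree, the cross terms cancelling. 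By construction $\varphi$ and $\sigma$ satisfy $(1)$ and $(2)$, so $\varphi$ is v-null-homotopic; and restricting $\varphi$ to cycles gives $Z_0(\varphi)_j=Z_0(\sigma^{j-1})\pa^{G}_j+\pa^{G'}_{j+1}Z_0(\sigma^j)=s_{j-1}\pa^{G}_j+\pa^{G'}_{j+1}s_j=f_j$, so $\varphi$ is indeed a lift of $f$. The injective case is entirely dual, replacing the comparison theorem for complete projective resolutions by the one for complete injective resolutions and $Z_0$ by the corresponding cycle functor.

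The only genuine work is the chain-map verification of the preceding paragraph, which is routine; the single point that requires care is keeping the sign conventions on the two differentials of the bicomplexes aligned so that conditions $(1)$ and $(2)$ come out exactly as in the definition. There is no serious obstacle beyond this bookkeeping, since the existence of the row-wise lifts $\sigma^j$ is a standard property of complete resolutions (and even lifting $s_j$ only up to a homotopy would suffice, at the cost of concluding $Z_0(\varphi)$ is merely chain-homotopic to $f$).
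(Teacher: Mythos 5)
Your proposal is correct and takes essentially the same route as the paper: lift the homotopy $s$ row by row along the totally acyclic rows (Proposition \ref{qwe}) to chain maps $\sigma^j=\bar{s}^j\colon X^j\rt Y^{j+1}$, and define $\varphi_i^j=\bar{s}_i^{j-1}(v_x)_i^j+(v_y)_i^{j+1}\bar{s}_i^j$. The only difference is that the paper dismisses the verification that $\varphi$ is a v-null-homotopic morphism of bicomplexes as routine, whereas you spell it out.
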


\begin{proof}
Let $s$ be the homotopy morphism that forces $f$ to be null-homotopic. Since by Proposition \ref{qwe}, the rows of $T_G$ and $T_{G'}$ are totally acyclic, an standard argument shows that $s$ in each $j$th row induces a morphism $\bar{s}^j:X^j \lrt Y^{j+1}$. For any integers $i$ and $j$ we define the map $\varphi_i^j: X_i^j \lrt Y_i^j$ by setting $\varphi_i^j:=\bar{s}_{i}^{j-1}(v_x)_{i}^{j}+(v_y)_{i}^{j+1}\bar{s}_{i}^{j}.$ It is routine to check that $\varphi$ is a v-null-homotopic morphism of bicomplexes.
\end{proof}

\begin{sremark}\label{cone}
Let $f:X \rt Y$ be a morphism of complexes in $\C(R)$. By \cite{BEIJR} there exists a short exact sequence
\[0 \lrt Y \lrt \cone(f) \lrt \TS(X) \lrt 0\]
in $\C(R)$, in which $\cone(f)$ denotes the mapping cone of $f$ and $\TS(X)$ denotes the suspension of $X$, see \ref{suspension}.
It can be checked easily that a commutative diagram
\[\xymatrix{X \ar[r]^f \ar[d]_\rho & Y \ar[d]^\mu \\ X' \ar[r]^{f'} & Y'}\]
in $\C(R)$ induces the commutative diagram
\[\xymatrix{0 \ar[r] & Y \ar[r] \ar[d]^\mu & \cone(f) \ar[r] \ar@{.>}[d] & \TS(X) \ar@{.>}[d] \ar[r] & 0 \\ 0 \ar[r] & Y' \ar[r] & \cone(f) \ar[r] & \TS(X') \ar[r] & 0 }\]
with exact rows.
\end{sremark}

The following lemma is proved in \cite[Lemma 3.2]{GT}. See also \cite[Lemma 2.1]{BEIJR}.

\begin{slemma}\label{enochs}
The above short exact sequence splits if and only if $f\simeq 0$.
\end{slemma}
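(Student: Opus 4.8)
The short exact sequence
\[0 \lrt Y \lrt \cone(f) \lrt \TS(X) \lrt 0\]
in $\C(R)$ (attached to a chain map $f: X \to Y$) splits if and only if $f \simeq 0$.

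Here is the plan. The sequence in question is degree-wise split by construction, so the claim is really about whether a degree-wise splitting can be chosen to be a chain map; this is the standard bijection between chain-level splittings of the cone sequence and null-homotopies of $f$. I would set up coordinates: recall that $\cone(f)_n = X_{n-1} \oplus Y_n$ with differential $\partial^{\cone(f)}_n = \begin{pmatrix} -\partial^X_{n-1} & 0 \\ -f_{n-1} & \partial^Y_n \end{pmatrix}$, the inclusion $Y \to \cone(f)$ is $y \mapsto (0,y)$, and the projection $\cone(f) \to \TS(X)$ is $(x,y) \mapsto x$, where $\TS(X)_n = X_{n-1}$ with differential $-\partial^X_{n-1}$.

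For the ``if'' direction, suppose $f \simeq 0$ via a homotopy $s$ with $s_{n-1}: X_{n-1} \to Y_n$ and $f_n = \partial^Y_{n+1} s_n + s_{n-1} \partial^X_n$. I would define a candidate retraction $r: \cone(f) \to Y$ by $r_n(x,y) = y - s_{n-1}(x)$ (equivalently a splitting $\sigma: \TS(X) \to \cone(f)$ of the projection by $\sigma_n(x) = (x, s_{n-1}(x))$), and check directly that this commutes with the differentials — the homotopy identity for $s$ is exactly what makes the off-diagonal term vanish. That gives a chain-map splitting, hence the sequence splits.

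For the ``only if'' direction, suppose we are given a chain-map splitting, say a retraction $r: \cone(f) \to Y$ with $r \circ \iota = \id_Y$. Writing $r_n(x,y) = (\text{something})$, the condition $r_n\iota_n = \id$ forces the $Y$-component to be $y$ plus a map $-s_{n-1}: X_{n-1}\to Y_n$ applied to $x$, i.e. $r_n(x,y) = y - s_{n-1}(x)$ for some family of maps $s_{n-1}$; then imposing that $r$ is a chain map and reading off the components yields precisely the null-homotopy identity $f_n = \partial^Y_{n+1}s_n + s_{n-1}\partial^X_n$, so $f \simeq 0$. (Alternatively one can work with a splitting $\sigma$ of the projection; the computation is symmetric.) The main obstacle is simply bookkeeping: getting the signs in the cone differential and in $\TS(X)$ consistent with the conventions fixed in \ref{suspension} and in Remark \ref{cone}, so that the cross-terms in the matrix product of differentials line up exactly with the homotopy relation. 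Since this is the standard computation (and the paper explicitly cites \cite[Lemma 3.2]{GT} and \cite[Lemma 2.1]{BEIJR}), I would keep the verification terse, presenting the explicit formula for the splitting in each direction and leaving the entry-by-entry diagram chase to the reader.
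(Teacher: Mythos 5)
Your argument is correct and is exactly the standard correspondence between chain-map splittings of the mapping-cone sequence and null-homotopies of $f$; the paper gives no proof of its own, merely citing \cite[Lemma 3.2]{GT} and \cite[Lemma 2.1]{BEIJR}, where this same computation is carried out. Both directions of your verification (reading the homotopy identity off the chain-map condition for $r_n(x,y)=y-s_{n-1}(x)$, and conversely) check out with the stated sign conventions, so nothing is missing.
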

\vspace{0.3cm}

Now assume that $\varphi:X \rt Y$ is a morphism of bicomplexes. By the Remark \ref{cone}, we have the following commutative diagram in $\C(R)$ with the exact columns.
\[\xymatrix@C-0.5pc@R-0.8pc{& 0 \ar[d] & 0 \ar[d] & 0 \ar[d] \\ \cdots \ar[r] & Y_{1} \ar[r] \ar[d] & Y_{0} \ar[r] \ar[d] & Y_{-1} \ar[r] \ar[d] & \cdots \\ \cdots \ar[r] & \cone(\varphi_{1}) \ar[r] \ar[d] &\cone(\varphi_{0})  \ar[r] \ar[d] & \cone(\varphi_{-1}) \ar[r] \ar[d] & \cdots \\ \cdots \ar[r] & \TS(X_1) \ar[r] \ar[d] & \TS(X_{0}) \ar[r] \ar[d] & \TS(X_{-1}) \ar[r] \ar[d] & \cdots \\ & 0 & 0 & 0  }\]

We say that a morphism $\varphi:X \rt Y$ of bicomplexes is degree-wise null-homotopic (dw-null-homotopic, for short) if for each $i \in \Z$, the morphism $\varphi_i: X_i \rt Y_i$ is null-homotopic.

Therefore, if $\varphi:X \rt Y$ is a dw-null-homotopic map, by Lemma \ref{enochs}, all columns of the above diagram split and by an standard argument we get a triangle
\[\xymatrix@C-0.5pc{Y \ar[r] & \vcone(\varphi) \ar[r] & \VS(X) \ar@{~>}[r] & }\]
in $\KC$. Note that in this triangle $\vcone$ is in fact the vertical (not the usual) cone of $\varphi$. This explain the notation we used. Moreover, $\VS(X)$ is the bicomplex with $\VS(X)^j=X^{j-1}$, that is shifting the rows of $X$ one degree to the up, which is again different from the usual shifting in $\KC$.\\

In particular we have the following corollary.

\begin{scorollary}
Let $X \in \KPC$. Then any morphism $\varphi:X \rt Y$ induces a triangle as above.
\end{scorollary}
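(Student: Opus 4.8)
The plan is to derive this directly from the discussion immediately preceding the corollary. I will check that when $X \in \KPC$ every morphism $\varphi : X \rt Y$ is automatically dw-null-homotopic; once that is in hand, the triangle in $\KC$ displayed just above, with $\vcone(\varphi)$ the vertical cone of $\varphi$ and $\VS(X)^j = X^{j-1}$, is produced by exactly the construction given there (splitting the columns of the cone diagram via Lemma \ref{enochs} and assembling them).

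The key elementary observation is that every projective object of $\C(R)$ is a contractible complex. Indeed, by the description recalled in \ref{Pro/Injcom} such a complex is a coproduct of complexes of the form $\cdots \rt 0 \rt P \st{\id}{\rt} P \rt 0 \rt \cdots$, each of which is contractible; hence $\id_{P} \simeq 0$ for every projective complex $P$. Now let $X \in \KPC$, so that for each $i \in \Z$ the $i$th column $X_i$ is a projective complex; then $\id_{X_i} \simeq 0$ in $\C(R)$. Consequently, for any morphism $\varphi : X \rt Y$ and any $i \in \Z$, the component $\varphi_i : X_i \rt Y_i$ equals $\varphi_i \circ \id_{X_i}$ and is therefore null-homotopic. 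This is precisely the statement that $\varphi$ is dw-null-homotopic.

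At this point the construction preceding the corollary applies verbatim: one forms the commutative diagram with exact columns built from the mapping cones $\cone(\varphi_i)$, and since each $\varphi_i \simeq 0$, Lemma \ref{enochs} shows that every column of that diagram splits. The standard argument then assembles these splittings into the asserted triangle $Y \rt \vcone(\varphi) \rt \VS(X)$ in $\KC$. There is essentially no real obstacle here; the only point requiring any care is the observation that a morphism out of a contractible complex is null-homotopic, applied column by column, and everything else is quoted from the preceding paragraphs.
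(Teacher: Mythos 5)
Your proof is correct and follows the same route as the paper: note that each column $X_i$ of $X\in\KPC$ is a projective, hence contractible, complex, so every $\varphi_i$ is null-homotopic, making $\varphi$ dw-null-homotopic, and then invoke the construction preceding the corollary. The extra detail you supply (why projective complexes are contractible and why maps out of contractible complexes are null-homotopic) is just an expansion of the paper's one-line argument.
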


\begin{proof}
Since $X \in \KPC$ all columns of $X$ are projective complexes and hence are contractible. So for any $i \in \Z$, the morphism $\varphi_i: X_i \rt Y_i$ is null-homotopic. The result now follows from the above discussion.
\end{proof}

By definition, any v-null-homotopic map is dw-null-homotopic. We showed that any dw-null-homotopic morphism induces a triangle in $\KC$. In the following lemma we show that if $\varphi$ is v-null-homotopic, the induced triangle will be split.

\begin{slemma}
Let $\varphi:X \rt Y$ be a v-null-homotopic morphism of bicomplexes. Then the induced exact sequence
\[\xymatrix@C-0.5pc{0 \ar[r] & Y \ar[r] & \vcone(\varphi) \ar[r] & \VS(X) \ar[r] & 0}\] is split exact.
\end{slemma}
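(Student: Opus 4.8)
The plan is to exhibit an explicit morphism of bicomplexes splitting the canonical epimorphism $\vcone(\varphi)\rt\VS(X)$. The two clauses in the definition of a v-null-homotopic map do their work separately: condition $(1)$ produces the splitting column by column, and condition $(2)$ is exactly what is needed to glue these column-wise splittings into a genuine map of bicomplexes. (Recall that, as already noted, a v-null-homotopic morphism is in particular dw-null-homotopic, so the short exact sequence in question is indeed defined.)

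First I would unwind the relevant constructions, with the sign conventions fixed above. The $i$-th column of $\vcone(\varphi)$ is the ordinary mapping cone $\cone(\varphi_i)$ of the chain map $\varphi_i\colon X_i\rt Y_i$, so $\vcone(\varphi)_i^j=X_i^{j-1}\oplus Y_i^j$ with the usual mapping-cone vertical differential and block-diagonal horizontal differential $(h_x)\oplus(h_y)$, while $\VS(X)_i=\TS(X_i)$. Condition $(1)$ says precisely that $s_i=(s_i^j)_{j\in\Z}$ is a chain homotopy from $\varphi_i$ to $0$ in $\C(R)$. Hence by Lemma \ref{enochs} — or rather by the explicit construction behind it — the short exact sequence $0\rt Y_i\rt\cone(\varphi_i)\rt\TS(X_i)\rt0$ splits in $\C(R)$ via the chain map $\sigma_i\colon\TS(X_i)\rt\cone(\varphi_i)$ which in row $j$ sends $x$ to $(x,-s_i^{j-1}(x))$; this is a section of the canonical projection $\cone(\varphi_i)\rt\TS(X_i)$.

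The heart of the matter is to check that $\sigma:=(\sigma_i)_{i\in\Z}$ is a morphism of bicomplexes $\VS(X)\rt\vcone(\varphi)$, i.e. that it commutes with the horizontal differentials (commutation with the vertical differentials being the statement, already established, that each $\sigma_i$ is a chain map). Evaluating the two composites $\sigma_{i-1}\circ(\text{horizontal differential of }\VS(X))$ and $(\text{horizontal differential of }\vcone(\varphi))\circ\sigma_i$ on an element $x\in\VS(X)_i^j=X_i^{j-1}$, both have first coordinate $(h_x)_i^{j-1}(x)$, and equality of the second coordinates amounts exactly to $s_{i-1}^{j-1}(h_x)_i^{j-1}=(h_y)_i^{j}s_i^{j-1}$, which is condition $(2)$ of the definition after the reindexing $j\mapsto j-1$. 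Therefore $\sigma$ is a morphism of bicomplexes splitting $\vcone(\varphi)\rt\VS(X)$, so the sequence $0\rt Y\rt\vcone(\varphi)\rt\VS(X)\rt0$ is split exact.

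I do not expect a serious obstacle: the only real work is bookkeeping — keeping the conventions for $\TS$, $\VS$, $\vcone$ and the mapping-cone differential mutually consistent so that $\sigma_i$ is literally a chain map, and matching the reindexed identity in the horizontal check to condition $(2)$ on the nose. Conceptually one can sidestep all of this by regarding $X$ and $Y$ through their vertical (transpose) complex structure, which makes them complexes over the additive category $\C(R)$; conditions $(1)$ and $(2)$ together then say precisely that $\varphi$ is null-homotopic in $\C(\C(R))$, $\vcone(\varphi)$ and $\VS(X)$ are the mapping cone and suspension there, and the statement becomes a formal instance of the cone-splitting criterion (Lemma \ref{enochs}, whose proof is purely formal) for the additive category $\C(R)$, the section $\sigma$ above being just the standard one, transposed.
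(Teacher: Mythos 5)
Your proof is correct and follows essentially the same route as the paper: the paper also takes the column-wise sections $u_i$ of $\pi_i$ coming from the null-homotopies of the columns (condition $(1)$) and observes that v-null-homotopy (condition $(2)$) makes the collection $\{u_i\}$ a morphism of bicomplexes $\VS(X)\rt\vcone(\varphi)$. You merely make explicit the formula for the section and the horizontal-compatibility check that the paper leaves as ``easy to check.''
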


\begin{proof}
Consider the diagram
\[\xymatrix@C-0.5pc@R-0.8pc{& 0 \ar[d] & 0 \ar[d] & 0 \ar[d] \\ \cdots \ar[r] & Y_{1} \ar[r] \ar[d] & Y_{0} \ar[r] \ar[d] & Y_{-1} \ar[r] \ar[d] & \cdots \\ \cdots \ar[r] & \cone(\varphi_{1}) \ar[r] \ar[d]^{\pi_1} &\cone(\varphi_{0})  \ar[r] \ar[d]^{\pi_0} & \cone(\varphi_{-1}) \ar[r] \ar[d]^{\pi_{-1}} & \cdots \\ \cdots \ar[r] & \TS(X_1) \ar[r] \ar[d] & \TS(X_{0}) \ar[r] \ar[d] & \TS(X_{-1}) \ar[r] \ar[d] & \cdots \\ & 0 & 0 & 0  }\]
Since any column is split, for any $i$, we may find a map $u_i: \TS(X_i) \rt \cone(\varphi_i)$ with $\pi_iu_i=\id_{\TS(X_i)}$, see the proof of \cite[Lemma 2.1]{BEIJR}. Since $\varphi$ is v-null-homotopic, it is easy to check that the collection $\{u_i\}_{i \in \Z}$ is a morphism $\VS(X) \rt \vcone(\varphi)$. This implies the result.
\end{proof}

Our next result can be considered as a converse of the above lemma.

\begin{slemma}\label{dw-null-hom}
Let $\varphi:X \rt Y$ be a dw-null-homotopic morphism of bicomplexes with the property that the induced triangle
\[\xymatrix{Y \ar[r]^{\iota} & \vcone(\varphi) \ar[r]^{\pi} & \VS(X) \ar@{~>}[r] & }\]
splits. Then there exists a v-null-homotopic map $\psi:X \rt Y$ which is homotopic to $\varphi$ in $\KC$.
\end{slemma}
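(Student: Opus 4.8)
The plan is to reduce the statement to a correction of the columnwise null-homotopy of $\varphi$. First I would fix, for each column index $i$, a chain homotopy $s_i\colon X_i\to Y_i$ (of vertical degree $+1$) with $\varphi_i=(v_y)s_i+s_i(v_x)$; these exist because $\varphi$ is dw-null-homotopic. Using precisely these $s_i$ to split the degreewise-split sequences $0\to Y_i\to\cone(\varphi_i)\to\TS(X_i)\to 0$ — concretely via $\sigma_i(x)=(-s_ix,x)$ and retraction $\rho_i(y,x)=y+s_ix$, as in the proof of \cite[Lemma 2.1]{BEIJR} — I would compute the connecting morphism $\partial$ of the triangle $Y\to\vcone(\varphi)\to\VS(X)\to\Sigma Y$ degreewise. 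Since $\varphi$ strictly commutes with the horizontal differentials, the horizontal differential of $\vcone(\varphi)$ is block-diagonal, and a direct computation gives $\partial_i=-\bigl((h_y)s_i-s_{i-1}(h_x)\bigr)$. Writing $t_i:=(h_y)s_i-s_{i-1}(h_x)\colon X_i\to Y_{i-1}$, a short calculation using $(h_y)\varphi_i=\varphi_{i-1}(h_x)$, the commutation relation ${(h_x)}_i^{j-1}{(v_x)}_i^j={(v_x)}_{i-1}^j{(h_x)}_i^j$ and the homotopy identity for $s_i$ shows that the $t_i$ assemble into a morphism of bicomplexes $t\colon\VS(X)\to\Sigma Y$ with $\partial=-t$. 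The observation to record is that, comparing with the definition of v-null-homotopic, condition $(1)$ there is exactly the homotopy identity for $s_i$ and condition $(2)$ is exactly the equation $t=0$; hence $\varphi$ is v-null-homotopic via $(s_i)$ if and only if $t$ vanishes on the nose.

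Next I would bring in the hypothesis that the triangle splits, which means $\partial=0$ in $\KC$, i.e. $t$ is null-homotopic as a morphism of bicomplexes. Unwinding what a null-homotopy in $\KC=\K(\C(R))$ is, it consists of a family of morphisms of complexes $\theta_i\colon\VS(X)_i=\TS(X_i)\to(\Sigma Y)_{i+1}=Y_i$ — equivalently maps $\theta_i\colon X_i\to Y_i$ of vertical degree $+1$ with $(v_y)\theta_i+\theta_i(v_x)=0$, this anticommutation being precisely the assertion that $\theta_i$ is a chain map $\TS(X_i)\to Y_i$ — with $t_i=-(h_y)\theta_i+\theta_{i-1}(h_x)$.

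Finally I would correct the homotopy: set $s_i':=s_i+\theta_i$. Because $\theta_i$ anticommutes with the vertical differentials, $(v_y)s_i'+s_i'(v_x)=(v_y)s_i+s_i(v_x)=\varphi_i$, so $(s_i')$ is still a columnwise null-homotopy of $\varphi$; and $(h_y)s_i'-s_{i-1}'(h_x)=t_i+\bigl((h_y)\theta_i-\theta_{i-1}(h_x)\bigr)=t_i-t_i=0$, so $(s_i')$ also satisfies condition $(2)$. Thus $(s_i')$ is a v-null-homotopy of $\varphi$ itself, and one may simply take $\psi=\varphi$ (trivially homotopic to $\varphi$); reading ``injective'' for ``projective'' throughout gives the statement for $\K(\RGInj)$ verbatim. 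I expect the main obstacle to be the sign bookkeeping: one must carefully extract from \ref{notation} and the definitions of $\vcone$, $\VS$ and the triangulated suspension on $\KC$ that $\VS$ negates the vertical differentials while the suspension negates the horizontal ones, since it is exactly this that turns ``$\theta_i$ is a morphism of complexes'' into the relation $(v_y)\theta_i+\theta_i(v_x)=0$ which keeps $s_i+\theta_i$ a homotopy for $\varphi$. Everything else is the routine mapping-cone matrix bookkeeping sketched above.
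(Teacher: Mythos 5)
Your proof is correct, but it takes a genuinely different route from the paper's. The paper uses the splitting to produce a morphism $u:\VS(X)\rt\vcone(\varphi)$ with $\pi u\simeq\id$, replaces $\varphi$ by $\psi:=\varphi\circ\VS^{-1}(\pi u)$ (homotopic to $\varphi$ since $\pi u-\id$ is null-homotopic), and reads off the vertical homotopy of $\psi$ from the components $\nu_i u_i$, where $\nu:\vcone(\varphi)\rt Y$ is the projection. You instead compute the connecting morphism of the triangle explicitly with respect to the splittings determined by the chosen columnwise homotopies $s_i$, identify it (up to sign) with the obstruction $t_i=(h_y)s_i-s_{i-1}(h_x)$ to the $s_i$ assembling into a v-null-homotopy, and then use the null-homotopy $\theta$ of $t$ supplied by the splitting hypothesis to correct $s$ to $s'=s+\theta$; the verifications you sketch (that $t$ is a chain map $\VS(X)\rt\Sigma Y$, that $\theta_i$ anticommutes with the vertical differentials so that $s'$ still witnesses the columnwise null-homotopy, and that $s'$ now satisfies condition $(2)$) all go through. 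What your approach buys is a strictly stronger conclusion: $\varphi$ itself is v-null-homotopic, so one may take $\psi=\varphi$; this would in fact streamline the application in Theorem \ref{main2}, where the paper must still pass from $D\otimes\varphi$ to a homotopic $\psi$ and invoke Lemma \ref{equNotTri} to see that this does not affect $Z_0$. What the paper's approach buys is the avoidance of the explicit connecting-map computation and of the sign bookkeeping you rightly flag as the delicate point. One small slip: your closing remark about reading ``injective'' for ``projective'' belongs to Lemma \ref{lifting}, not to this statement, which is purely about bicomplexes.
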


\begin{proof}
Our assumption implies there exists a morphism $u:\VS(X) \lrt \vcone(\varphi)$ such that $\pi u-\id_{\VS(X)}: \VS(X) \rt \VS(X)$ is null-homotopic. We define $\psi:X \rt Y$ by setting $\psi:=\varphi\circ\VS^{-1}(\pi u)$. Since $\psi-\varphi=\varphi\circ\VS^{-1}(\pi u-\id_{\VS(X)})$ and $\pi u-\id_{\VS(X)}$ is null homotopic, we may deduce that $\psi$ is homotopic to $\varphi$. To see that $\psi$ is a v-null-homotopic map we may define, $s^j: X^j \rt Y^{j+1}$, for any $j \in \Z$ by defining $s^j_i:X^j_i \rt Y^{j+1}_i$ to be $s^j_i=\nu^j_iu^j_i$, where $\nu$ is the canonical projection map $\vcone(\varphi) \rt Y$. Now it is just a simple diagram checking to show that $\psi$ is, in fact, a v-null-homotopic map.
\end{proof}

Now we are ready to present our last theorem in this paper.

\begin{stheorem}\label{main2}
Let $R$ be a commutative noetherian ring admitting a dualising complex $D$. Then there exists a triangle-equivalence between triangulated categories $\K(\RGPrj)$ and $\K(\RGInj)$, that restricts to an equivalence between $\KPR$ and $\KIR$.
\end{stheorem}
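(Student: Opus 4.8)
The plan is to identify both $\K(\RGPrj)$ and $\K(\RGInj)$ with suitable ``vertical homotopy categories'' built out of $\KTAPC$ and $\KTAIC$, and then transport the equivalence $D\otimes-$ of Proposition~\ref{equivTotAcyc} across these identifications. Since a dualising complex forces $R$ to have finite Krull dimension, Remark~\ref{characterization} applies: a complex of $R$-modules is a Gorenstein projective (resp.\ Gorenstein injective) object of $\C(R)$ precisely when each of its terms is Gorenstein projective (resp.\ injective). I would first check that the v-null-homotopic maps form an ideal of $\KTAPC$, and let $\mathcal{V}_{\mathrm P}$ be the quotient of $\KTAPC$ by this ideal; define $\mathcal{V}_{\mathrm I}$ from $\KTAIC$ in the same way. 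Every morphism of $\KTAPC$ has source in $\KPC$, hence is dw-null-homotopic, so each morphism of $\mathcal{V}_{\mathrm P}$ admits a vertical cone; using Remark~\ref{cone}, Lemma~\ref{enochs}, the lemma characterising v-null-homotopic maps by the splitting of $\vcone(\varphi)$, and Lemma~\ref{dw-null-hom}, one verifies that $(\mathcal{V}_{\mathrm P},\VS,\vcone)$ and $(\mathcal{V}_{\mathrm I},\VS,\vcone)$ are triangulated categories. Because the functor $\xi$ of Lemma~\ref{equNotTri} kills v-null-homotopic maps (restrict a vertical homotopy to the subcomplex of $0$-cycles), it descends to triangulated functors $\bar\xi:\mathcal{V}_{\mathrm P}\to\K(\RGPrj)$ and $\bar\xi':\mathcal{V}_{\mathrm I}\to\K(\RGInj)$, the triangulatedness coming from $Z_0\circ\VS\cong\Sigma\circ Z_0$ and $Z_0(\vcone\varphi)\cong\cone(Z_0\varphi)$.

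Next I would prove that $\bar\xi$ and $\bar\xi'$ are triangle equivalences. For essential surjectivity: a complex $G$ of Gorenstein projective modules is a Gorenstein projective complex, so by \ref{GorPro/Injcom} there is an exact complex $P_\bullet$ of projective complexes, staying exact under $\Hom_{\C(R)}(-,P')$ for every projective complex $P'$, with $G=\Ker(P_0\to P_{-1})$; this $P_\bullet$ lies in $\KTAPC$ (it is acyclic, $\Hom_{\C(R)}(-,P')$-acyclic by construction, and $\Hom_{\C(R)}(P',-)$-acyclic since projective complexes are projective objects of $\C(R)$), and $\xi(P_\bullet)=G$. For fullness: given $g:Z_0X\to Z_0Y$ in $\K(\RGPrj)$, the truncation $\cdots\to X_1\to Z_0X\to 0$ is a projective resolution in $\C(R)$ and $Y$ is totally acyclic, so the usual comparison argument lifts a chain representative of $g$ to a map $\varphi:X\to Y$ of bicomplexes with $Z_0\varphi=g$. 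For faithfulness (the delicate point): if $Z_0\varphi$ is null-homotopic, Lemma~\ref{lifting} produces a v-null-homotopic $\psi:X\to Y$ with $Z_0\psi=Z_0\varphi$ on the nose; then $Z_0(\varphi-\psi)=0$, hence $Z_0(\vcone(\varphi-\psi))\cong\cone(0)$ splits, and a comparison argument along $\cdots\to X_1\to Z_0X\to 0$ (extending into negative degrees using total acyclicity of $\vcone(\varphi-\psi)$) shows that the triangle $Y\to\vcone(\varphi-\psi)\to\VS X$ splits, so Lemma~\ref{dw-null-hom} applies: $\varphi-\psi$ is homotopic to a v-null-homotopic map, whence $\varphi=\psi$ in $\mathcal{V}_{\mathrm P}$, and since $\psi$ is v-null-homotopic, $\varphi=0$ in $\mathcal{V}_{\mathrm P}$.

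With the equivalences in hand I would finish as follows. The functor $D\otimes-$ is applied rowwise, so it carries v-null-homotopies, $\VS$, and $\vcone$ to the corresponding data; combined with Proposition~\ref{equivTotAcyc} this yields a triangle equivalence $\mathcal{V}_{\mathrm P}\simeq\mathcal{V}_{\mathrm I}$, and then $\bar\xi'\circ(D\otimes-)\circ\bar\xi^{-1}$ gives the desired triangle equivalence $\K(\RGPrj)\simeq\K(\RGInj)$. For the final clause, $\bar\xi$ carries the image of $\KCPC$ into $\KPR$ (the cycles of a projective complex are projective), and does so essentially surjectively — given a complex $Q$ of projectives, realise each term as the $0$-cycle of a projective complex and assemble these into a bicomplex in $\KCPC$ — so $\bar\xi$ restricts to an equivalence between the image of $\KCPC$ in $\mathcal{V}_{\mathrm P}$ and $\KPR$; likewise on the injective side. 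Since, by Remark~\ref{KCPC}, $D\otimes-$ restricts to an equivalence $\KCPC\simeq\KCIC$, chasing an object of $\KPR$ through $\bar\xi^{-1}$, then $D\otimes-$, then $\bar\xi'$ (and running the quasi-inverse the other way) shows that the equivalence $\K(\RGPrj)\simeq\K(\RGInj)$ restricts to an equivalence $\KPR\simeq\KIR$.

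The heart of the argument, and the step I expect to cause the most trouble, is the faithfulness of $\bar\xi$: one must translate between genuine null-homotopies of complexes of Gorenstein projective modules and v-null-homotopies of bicomplexes, keeping track of the gap between ordinary and vertical homotopies, and it is precisely here that Lemma~\ref{lifting}, Lemma~\ref{dw-null-hom}, the $\vcone$-splitting criterion, and the comparison theorem for complete resolutions in $\C(R)$ must be used in concert. A second, more routine, obstacle is verifying the octahedral axiom for $(\mathcal{V}_{\mathrm P},\VS,\vcone)$.
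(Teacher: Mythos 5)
Your proposal is correct, and it runs on exactly the same machinery as the paper's proof of Theorem \ref{main2} --- Lemma \ref{lifting}, the splitting criterion for $\vcone$ together with Lemma \ref{dw-null-hom}, the row-wise equivalence $D\otimes -$ of Proposition \ref{equivTotAcyc}, and Remark \ref{KCPC} for the restriction to $\KPR\simeq\KIR$ --- but you package it differently. The paper defines $\Psi(G)=Z_0(D\otimes T_G)$ directly on $\K(\RGPrj)$ and checks well-definedness on a null-homotopic $f$ by the chain: lift $f$ to a v-null-homotopic $\varphi$, deduce that the $\vcone$-sequence splits, hence so does its image under $D\otimes -$, hence $D\otimes\varphi$ is homotopic to a v-null-homotopic map, hence $Z_0$ kills it; faithfulness, fullness and density are then only indicated. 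You instead interpose the quotients $\mathcal{V}_{\mathrm P}$, $\mathcal{V}_{\mathrm I}$ of $\KTAPC$, $\KTAIC$ by the v-null-homotopy ideal and show that $Z_0$ induces equivalences $\mathcal{V}_{\mathrm P}\simeq\K(\RGPrj)$ and $\mathcal{V}_{\mathrm I}\simeq\K(\RGInj)$, then transport $D\otimes -$ across these. This costs you the verification that $(\mathcal{V}_{\mathrm P},\VS,\vcone)$ is triangulated (standard: it is the homotopy category, read in the vertical direction, of complexes over the additive category of totally acyclic complexes of projectives), but it buys something the paper's write-up leaves implicit, namely that the resulting equivalence is one of \emph{triangulated} categories, with $Z_0\circ\VS\cong\Sigma\circ Z_0$ and $Z_0(\vcone\varphi)\cong\cone(Z_0\varphi)$ doing the work. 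Two simplifications are available. First, in your faithfulness step, once Lemma \ref{lifting} produces a v-null-homotopic $\psi$ with $Z_0\psi=Z_0\varphi$ on the nose, you can bypass the $\vcone$-splitting detour: $Z_0(\varphi-\psi)=0$ certainly factors through a projective complex, so by the equivalence $\KTAPC\simeq\GPrj\ \underline{\C(R)}$ of Remark \ref{stable} the map $\varphi-\psi$ is already zero in $\KTAPC$, hence $\varphi$ is v-null-homotopic plus h-null-homotopic, i.e.\ zero in $\mathcal{V}_{\mathrm P}$. Second, for the last clause you do not need to assemble a bicomplex in $\KCPC$ over a given $Q\in\KPR$ by hand: a totally acyclic complex of projective modules whose zeroth cycle is projective has all cycles projective (a Gorenstein projective module of finite projective dimension is projective) and is therefore contractible, so by Proposition \ref{qwe} \emph{any} $T\in\KTAPC$ with $Z_0T\in\KPR$ automatically lies in $\KCPC$; this is also what the paper extracts from the proof of Theorem 2.2 of \cite{YL}.
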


\begin{proof}
We plan to introduce a functor $\Psi$ that commutes the following diagram
\[\xymatrix{\K(\RGPrj) \ar@{.>}[rr]^{\Psi} & & \K(\RGInj) \\ \KPR \ar[u] \ar@{.>}[rr]^{\Psi|} & & \KIR \ar[u]}\]
Let $G \in \K(\RGPrj)$. By definition, there exists a totally acyclic complex $T_G$ in $\KTAPC$ such that $Z_0T_G=G$. We apply the equivalence $D\otimes - :\KTAPC \rt \KTAIC$ and define $\Psi(G)$ to be the complex $Z_0(D\otimes T_G)$. Clearly $\Psi(G) \in \K(\RGInj)$. Let $f:G \rt G'$ be a morphism in $\K(\RGPrj)$ which is null-homotopic. By Lemma \ref{lifting} $f$ can be lifted to a v-null-homotopic morphism $\varphi: T_G \rt T_{G'}$. Therefore the short exact sequence
\[0 \lrt T_{G'} \lrt \vcone(\varphi) \lrt \VS(X) \lrt 0\]
is split exact. This implies that the induced exact sequence
\[0 \lrt D\otimes T_{G'} \lrt \vcone(D\otimes \varphi) \lrt D\otimes \VS(X) \lrt 0\]
is also split exact. Hence by Lemma \ref{dw-null-hom}, there exists a v-null-homotopic map $\psi:X \rt Y$ which is homotopic to $D\otimes\varphi$ in $\KC$. This in view of Lemma \ref{equNotTri}, implies that $\Psi(f):\Psi(G) \lrt \Psi(G')$ is null-homotopic. Hence we have proved that $\Psi$ is well-defined. Following similar argument, will imply that $\Psi$ is faithful. Furthermore, it follows easily, by applying the equivalence $\KPC \simeq \KIC$ of Theorem \ref{IK-equivCompl}, that $\Psi$ is full and dense.

For the lower row of the diagram, just note that by the proof of Theorem 2.2 of \cite{YL}, if $G$ is a complex in $\KPR$ then there exists a totally acyclic complex $T_G \in \KTAPC$ with $G=Z_0T_G$ and with the extra property that it belongs to $\KCPC$. Now the proof follows from the Remark \ref{KCPC}.
\end{proof}

\section*{Acknowledgments}
The authors thank the Center of Excellence for Mathematics (University of Isfahan).


\begin{thebibliography}{9999}
\bibitem [AEHS]{AEHS} {\sc J. Asadollahi, H. Eshraghi, R. Hafezi, Sh. Salarian,} {On the homotopy categories of projective and injective representations of quivers,} J. Algebra {\bf 346(1)} (2011), 101-115.

\bibitem [BEIJR]{BEIJR}  {\sc D. Bravo, E. E. Enochs, A.C. Iacob, O. M. G. Jenda and J. Rada,} {\sl  Cotorsion pairs in $\mathbf{C}(R\rm{-Mod})$,} to appear in Rocky Mount. J. Math.

\bibitem[B]{B} {\sc A. Beligiannis,} {\sl The homological theory of contravariantly finite subcategories: Auslander-Buchweitz contexts, Gorenstein categories and (co)stabilization,} Comm. Algebra {\bf 28} (2000) 4547-4596.

\bibitem [C]{C} {\sc X.-W. Chen,} {\sl Homotopy equivalences induced by balanced pairs,} J. Algebra {\bf 324} (2010), 2718-2731.

\bibitem [EEI]{EEI} {\sc E. Enochs, S. Estrada, A. Iacob,} {\sl Gorenstein projective and flat complexes over noetherian rings,} to appear in Math. Nachr.

\bibitem [ER]{ER} {\sc E. Enochs, J. R. Garc\'{\i}a Rozas,} {\sl Gorenstein injective and projective Complexes}, Comm. Algebra, {\bf 26} (1998), 1657-1674.

\bibitem [G]{G} {\sc N. Gao,} {\sl Stable t-structures and homotopy category of Gorenstein-projective modules,} J. Algebra {\bf 324} (2010), 2503-2511.

\bibitem[IK]{IK} {\sc S. Iyengar, H. Krause,} {\sl Acyclicity versus total acyclicity for complexes over Noetherian rings}, Documenta Math, {\bf 11} (2006), 207-240.

\bibitem [J05]{J05} {\sc P. J{\o}rgensen,} {\sl The homotopy category of complexes of projective modules}, Adv. Math, {\bf 193} (2005), no. 1, 223-232.

\bibitem [J07]{J07}  {\sc P. J{\o}rgensen,} {\sl Existence of Gorenstein projective resolutions and Tate cohomology,} J. Eur. Math. Soc. {\bf 9} (2007), no. 1, 59-76.

\bibitem [H]{H} {\sc H. Holm,} {\sl Gorenstein homological dimensions,} J. Pure Appl. Algebra {\bf 189} (2004), 167-193.

\bibitem [GT]{GT} {\sc R. G\"{o}bel and J. Trlifaj,} {\sl Approximations and endomorphism algebras of modules}, Walter de Gruyter, Berlin (2006).

\bibitem [K01]{K01} {\sc H. Krause,} {\sl On Neeman's well generated triangulated categories,} Documenta. Math. {\bf 6} (2001), 121-126.

\bibitem [K05]{K05} {\sc H. Krause,} {\sl The stable derived category of a Noetherian scheme,} Compos. Math. {\bf 141} (2005), no. 5, 1128-1162.

\bibitem [K10]{K10} {\sc H. Krause,} {\sl Localization theory for triangulated categories}, Triangulated categories, LMS Lecture Note Ser. {\bf 375} (2010) 161-235.

\bibitem [LZ]{LZ} {\sc Z. Liu, C. Zhang,} {\sl Gorenstein injective complexes of modules over noetherian rings}, J. Algebra {\bf 321}(2009), 1546-1554.

\bibitem [M]{M}  {\sc H. R. Margolis,} {\sl Spectra and the Steenrod algebra,} North-Holland Math. Library, vol. 29, North-Holland, Amsterdam, 1983.

\bibitem [Mu]{Mu} {\sc D. Murfet,} {\sl The mock homotopy category of projectives and Grothendieck duality}, Ph.D. thesis, 2007.

\bibitem [MS]{MS}  {\sc D. Murfet, Sh. Salarian,} {\sl Totally acyclic complexes over Noetherian schemes,} Adv. in Math. {\bf 226} (2011) 1096-1133.

\bibitem [N96]{N96}  {\sc A. Neeman,} {\sl The Grothendieck duality theorem via Bousfield's techniques and Brown representability,} J. Amer. Math. Soc. {\bf 8} (1996), no. 1, 205-236.

\bibitem [N01]{N01} {\sc A. Neeman,} {\sl Triangulated categories}, Annals of Mathematics Studies, vol. 148, Princeton University Press, Princeton, NJ, 2001.

\bibitem [N08]{N08} {\sc A. Neeman,} {\sl The homotopy category of flat modules, and Grothendieck duality,} Invent. Math. {\bf 174} (2008), no. 2, 255-308.

\bibitem [R]{R} {\sc J. R. Garc\'{\i}a Rozas,} {\sl Covers and envelops in the category of complexes of modules,} CRS Press, Boca Raton, FL, 1999.

\bibitem[S]{S} {\sc  L. Salce,} {\sl Cotorsion theories for abelian groups,} Symposia Mathematica, Vol. XXIII (Conf. Abelian Groups and their Relationship to the Theory of Modules, INDAM, Rome, 1977), Academic Press, London, 1979, pp. 11–32.

\bibitem [Ve]{Ve77} {\sc J. L. Verdier,} {\sl Cat\'{e}gories d\'{e}riv\'{e}es: \'{e}tat 0}, in ``SGA 4$\frac{1}{2},$" Springer L.N.M.,Vol. 569, pp. 262-311, Springer-Verlag, Berlin/New York, 1977.
\bibitem [YL]{YL} {\sc X. Yang, Z. Liu,} {\sl Gorenstein projective, injective, and flat complexes,} Comm. Algebra. {\bf 39} (2011) 1705-1721.
\end{thebibliography}
\end{document}